\newtheorem{theorem}{Theorem}[subsection]
\newtheorem{lemma}{Lemma}[subsection]
\newtheorem{definition}{Definition}[subsection]
\newtheorem{assumption}{Assumption}[subsection]
\newtheorem*{remark}{Remark}
\newcommand{\R}{\mathbb{R}}
\newcommand{\C}{\mathbb{C}}
\newcommand{\cO}{\mathcal{O}}
\newcommand{\bx}{\boldsymbol x}
\newcommand{\by}{\boldsymbol y}
\newcommand{\bc}{\boldsymbol c}
\newcommand{\bu}{\boldsymbol u}
\newcommand{\bv}{\boldsymbol v}
\newcommand{\ba}{\boldsymbol a}
\newcommand{\bb}{\boldsymbol b}
\newcommand{\bs}{\boldsymbol s}
\newcommand{\bt}{\boldsymbol t}
\newcommand{\bgamma}{\boldsymbol \gamma}
\newcommand{\erfc}{\mathrm{erfc}}
\newcommand{\Vpsi}{V_\psi}
\crefname{figure}{Figure}{FIGURES}
\crefname{section}{Section}{SECTIONS}
\title[Fast Multipole Method with Complex Coordinates]{Fast Multipole Method with Complex Coordinates}
\author[T. Goodwill]{Tristan Goodwill}
\address{Department of Statistics and CCAM, University of Chicago,
Chicago, IL 60637 USA}
\email{tgoodwill@uchicago.edu}
\author[L. Greengard]{Leslie Greengard}
\address{Courant Institute of Mathematical Sciences, New York University, New York, NY 10012 and Center for Computational Mathematics, Flatiron Institute
New York, NY 10010}
\email{greengard@courant.nyu.edu}
\author[J. Hoskins]{Jeremy Hoskins}
\address{Department of Statistics and CCAM, University of Chicago,
Chicago, IL 60637 USA}
\email{jeremyhoskins@uchicago.edu}
\author[M. Rachh]{Manas Rachh}
\address{Department of Mathematics, Indian Institute of Technology Bombay, Powai, Mumbai 400076, India}
\email{mrachh@iitb.ac.in}
\author[Y. Wang]{Yuguan Wang}
\address{Department of Statistics, University of Chicago, Chicago, IL, 60637}
\email{yuguanw@uchicago.edu}
\begin{document}

\begin{abstract}
In this work we present a variant of the fast multipole method (FMM) for efficiently evaluating standard layer potentials on geometries with complex coordinates in two and three dimensions. The complex scaled boundary integral method for the efficient solution of scattering problems on unbounded domains results in complex point locations upon discretization. Classical real-coordinate FMMs are no longer applicable, hindering the use of this approach for large-scale problems. Here we develop the \emph{complex-coordinate FMM} based on the analytic continuation of certain special function identities used in the construction of the classical FMM. To achieve the same linear time complexity as the classical FMM, we construct a hierarchical tree based solely on the real parts of the complex point locations, and derive convergence rates for truncated expansions when the imaginary parts of the locations are a Lipschitz function of the corresponding real parts. We demonstrate the efficiency of our approach through several numerical examples and illustrate its application for solving large-scale time-harmonic water wave problems and Helmholtz transmission problems. 
\end{abstract}


\maketitle
\section{Introduction}
Boundary integral equations (BIE) methods offer an effective framework for solving boundary value problems in a number of settings, {\it inter alia} electrostatics, time-harmonic wave propagation, and Maxwell’s equations. Broadly speaking these methods seek to represent the solutions of partial differential equations (PDEs) in terms of unknown densities defined on boundaries of domains, thereby reducing the dimensionality of the problem, and often yielding well-conditioned second kind Fredholm integral equations. On the other hand, the resulting discretized integral operators are dense, and hence applying them naively as part of an iterative algorithm results in a computational cost which is quadratic with respect to the number of boundary discretization points. The classical fast multipole method (FMM)~\cite{GREENGARD1987325}  addresses this issue by exploiting the low-rank structure of distant interactions. It organizes source and target locations into a hierarchical tree and approximates interactions between well-separated clusters via {\it multipole} and {\it local} expansions, reducing the computational cost of applying integral operators to linear scaling. Related approaches such as the kernel-independent FMM (kiFMM)~\cite{malhotra2015pvfmm,YING2004591} and the dual-space multilevel kernel-splitting method (DMK)~\cite{https://doi.org/10.1002/cpa.22240} have also been developed, which work on broader classes of kernels. When coupled to an iterative algorithm such as GMRES, this class of algorithms enables the solution of the discretized BIEs in a time which typically scales linearly with the number of discretization points.

In many applications of interest, particularly wave propagation in layered media, it is desirable to model portions of the boundary or interfaces as infinite. While traditional BIE methods extend in a formal sense, the analysis and numerical solution of these equations are challenging due to the slow decay both of the solutions and the boundary data frequently encountered in such problems. One method for addressing this issue is \emph{complex scaling} (\cite{doi:10.1137/23M1607866,complexification,lu2018perfectly}) or \emph{integral equation based perfectly matched layers}, which involve `complexifying' the coordinates of the boundary, deforming the boundary into the complex plane. On these new contours, `outgoing' (oscillatory) behavior is transformed into exponential decay. In particular, truncating to a modestly sized computational domain yields exponentially small error. We refer to \cite{epstein2025complexscalingopenwaveguides,goodwill2024,hoskins2023quadraturesingularintegraloperators} for detailed analysis of this method and numerical implementation. For more complicated geometries, particularly in three dimensions, though this approach converts infinite-domain problems into essentially finite ones, the discretized systems may still be large, necessitating fast solvers. Unfortunately, classical FMM designed for real-valued point locations cannot be directly applied. In this work, we develop and analyze a variant of the FMM that supports complex point locations under mild geometric conditions. 

In this paper, we extend the classical FMM to handle source and target locations with complex coordinates. Typically, when using complex scaling, the points lie in $\C \times \R$ in two dimensions and in $\C^{2} \times \R$ in three dimensions, with certain monotonicity assumptions on the imaginary parts as a function of their corresponding real parts. While traditional kernel independent methods can be applied to these point clouds after embedding them in $\mathbb{R}^{3}$ in the two-dimensional case, and in $\mathbb{R}^{5}$ in the three-dimensional case, the associated constants of the linear complexity scheme can be prohibitively high. The primary contributions of our work are threefold: 
using a hierarchical data structure based solely on the real parts of the coordinates, extending definitions of mulitpole and local expansions to the complexified domains, and under certain Lipschitz conditions
on the imaginary parts as functions of the real part proving error estimates for the truncated multipole and local expansions. In particular, we show that the convergence rates for the truncated multipole and local expansions deteriorate with increasing Lipschitz constant $L$ as compared to the standard FMM and that the rate converges to the standard FMM convergence rate in the limit $L\to 0$. We demonstrate, through several numerical examples, the effectiveness of the complex-coordinate FMM in two and three dimensions, and apply this method to solve time-harmonic water wave problems and Helmholtz transmission problems in unbounded  three-dimensional domains.

The rest of the paper is organized as follows. In~\Cref{sec:background}, we review the complex scaling approach for solving boundary integral equations on unbounded interfaces in two and three dimensions, and the classical FMM. In~\Cref{sec:analytic-preliminaries}, we discuss extensions of polar and spherical coordinates to points in $\mathbb{C}^{2}$ and $\mathbb{C}^{3}$, respectively, and review properties of Bessel and Hankel functions. In~\Cref{sec:zfmm2d}, we discuss the geometric assumptions for the two-dimensional complex-coordinate FMMs, and truncation error estimates for the Laplace and Helmholtz multipole and local expansions and the extension of the corresponding translation operators, while in~\Cref{sec:zfmm3d} we discuss the analogous results for the three-dimensional case. In~\Cref{sec:tree_algorithm}, we discuss the modifications to the hierarchical data structure, and describe the complex fast multipole algorithm. We illustrate the effectiveness of the complex fast multipole methods through several numerical examples in~\Cref{sec:numerical_results}. Finally, we present some concluding remarks and directions for future work in~\Cref{sec:conc}.

\section{Background}
\label{sec:background}
In this section we briefly review the classical FMM and its application to the fast application of boundary layer potentials arising in BIEs. In Section~\ref{sec:layer_potentials}, we introduce layer potentials, which are the central operators considered in this work. Next, in Section~\ref{sec:complexification_example} we sketch the intuition behind the {\it complex scaling} method for integral equations, which is our main motivation for developing complex-coordinate FMM.  Finally, in Section~\ref{sec:FMMs_intro}, we provide a brief introduction to the classical FMM. 

\subsection{Layer potentials}
\label{sec:layer_potentials}
In the following, let $\Omega \subset \R^2$ be a region in the plane with smooth boundary $\Gamma=\partial \Omega$. The \emph{single layer potential} associated with the Helmholtz equation with wavenumber $\kappa>0$ is defined by the following formula
\begin{align}
\label{eqn:single_layer}
   S_{\Gamma,\kappa}[\sigma](\bx) = \int_{\Gamma} G_{\kappa} (\bx,\by) \sigma(\by) \, \mathrm{d}S(\by), \quad \bx \in \Omega,
\end{align}
where $G_\kappa(\cdot,\cdot)$ in~\eqref{eqn:single_layer} is the free space Green's function of the 2-D Helmholtz equation
\begin{align}
\label{eqn:green_fun_helm_2d}
    G_\kappa(\bx,\by) = \frac{i}{4} H_0^{(1)}(\kappa \|\bx-\by\|), 
\end{align}
and $H_0^{(1)}(\cdot)$ is the \emph{Hankel function of the first kind}. The function $\sigma(\cdot)$ defined on $\Gamma$ is typically referred to as the \emph{charge strength}, and the variables $\bx,\by$ in~\eqref{eqn:single_layer} as the  \textit{target location}  and   \textit{source location}, respectively.   By construction, the single layer potential automatically satisfies the Helmholtz equation 
\begin{align}
    (\Delta+\kappa^2)u=0, \quad \text{ in } \Omega. 
\end{align}

The \emph{double layer potential} is defined by replacing the kernel function by the normal derivative of the Green's function with respect to the second variable $\by$,
\begin{align}
 \label{eqn:double_layer}
D_{\Gamma,\kappa}[\tau](\bx)=\int_\Gamma  \left(  \frac{\partial}{\partial \nu(\by)}  G_{\kappa} (\bx,\by) \right)  \tau(\by)\, \mathrm{d}S(\by),\quad \bx \in \Omega
\end{align}
where $\nu(\by)$ denotes the outward unit normal vector of $\Gamma$  at the point $\by$, and the function $\tau$ in~\eqref{eqn:double_layer} is referred to as the \emph{dipole strength}. As for the single layer, the double layer potential also automatically satisfies the Helmholtz equation in $\Omega$. The single and double layer potentials associated with the 3-D Helmholtz equation, as well as the 2-D and 3-D Laplace equations, can be defined in a similar manner by replacing the kernel function with the corresponding free-space Green’s function or its normal derivative with respect to  $\by$. For the Laplace layer potentials, we simplify the notation and denote the layer potentials  by $S_\Gamma[\sigma]$ and $D_\Gamma[\tau]$. 

Assuming the target location $\bx$ lies on the surface $\Gamma$, then
we denote the principal value and finite part limits of the normal derivatives of the single and double layer potentials at $\bx$ by $S'_{\Gamma,\kappa}[\sigma]$ and $D'_{\Gamma,\kappa}[\sigma],$ respectively.

To evaluate these layer potentials or their normal derivatives  numerically at $N$ target locations, with $\Gamma$ discretized using $M$ points, the cost of the naive approach is $\cO(M\cdot N)$. The classical FMM and its variations~\cite{10.7551/mitpress/5750.001.0001,wideband3d,CHENG1999468,Greengard_Rokhlin_1997,malhotra2015pvfmm,YING2004591,714591} reduce this cost to $\cO(M+N)$ for a given precision. 

In this work, we are interested in evaluating layer potentials  where portions of the boundary $\tilde \Gamma$ have complex coordinates. In this case the Green's function~\eqref{eqn:green_fun_helm_2d} in the definition of the layer potentials  $S_{\tilde \Gamma,\kappa}$ and $D_{\tilde \Gamma,\kappa}$ are replaced by
\begin{align*}
    G_\kappa(\bx,\by)=\frac{i}{4} H_0^{(1)}(\kappa \cdot r_{\bx,\by})
\end{align*}
where $\bx,\by \in \C^2$ and $r_{\bx,\by}=\sqrt{(x_1-y_1)^2+(x_2-y_2)^2}$ is the \emph{complexified distance function}. 
For this problem, the classical FMM and their extensions are no longer applicable. Extending the classical FMM to this setting is the main focus of this work. 

\subsection{Complex scaling and integral equations}
\label{sec:complexification_example}
In this section we briefly sketch the integral equation based \emph{complex scaling} method for the sound-soft acoustic scattering problem on a perturbed half-space \cite{complexification}. In this example, we consider the 2-D case with a boundary $\Gamma$ parameterized by a curve $\bgamma:\R\to \R^2$ given by $\bgamma(t)=(t,\gamma_2(t))$, where $\gamma_2(t)=0$ if $|t|>L$ for some $L>0$. We are interested in solving the boundary value problem
\begin{align*}
    \begin{cases}
        (\Delta+\kappa^2)u=0, & \text{ in } \Omega, \\ 
        u=f, & \text{ on } \Gamma,\\
        \lim_{r \to \infty} r\left (\frac{\partial u}{\partial r}-i\kappa u\right )=0
    \end{cases}
\end{align*}
for some known data $f$ defined on $\Gamma$.

\begin{figure}[ht]
    \centering
\includegraphics[width=0.75\textwidth, trim=0 340 0 340, clip]{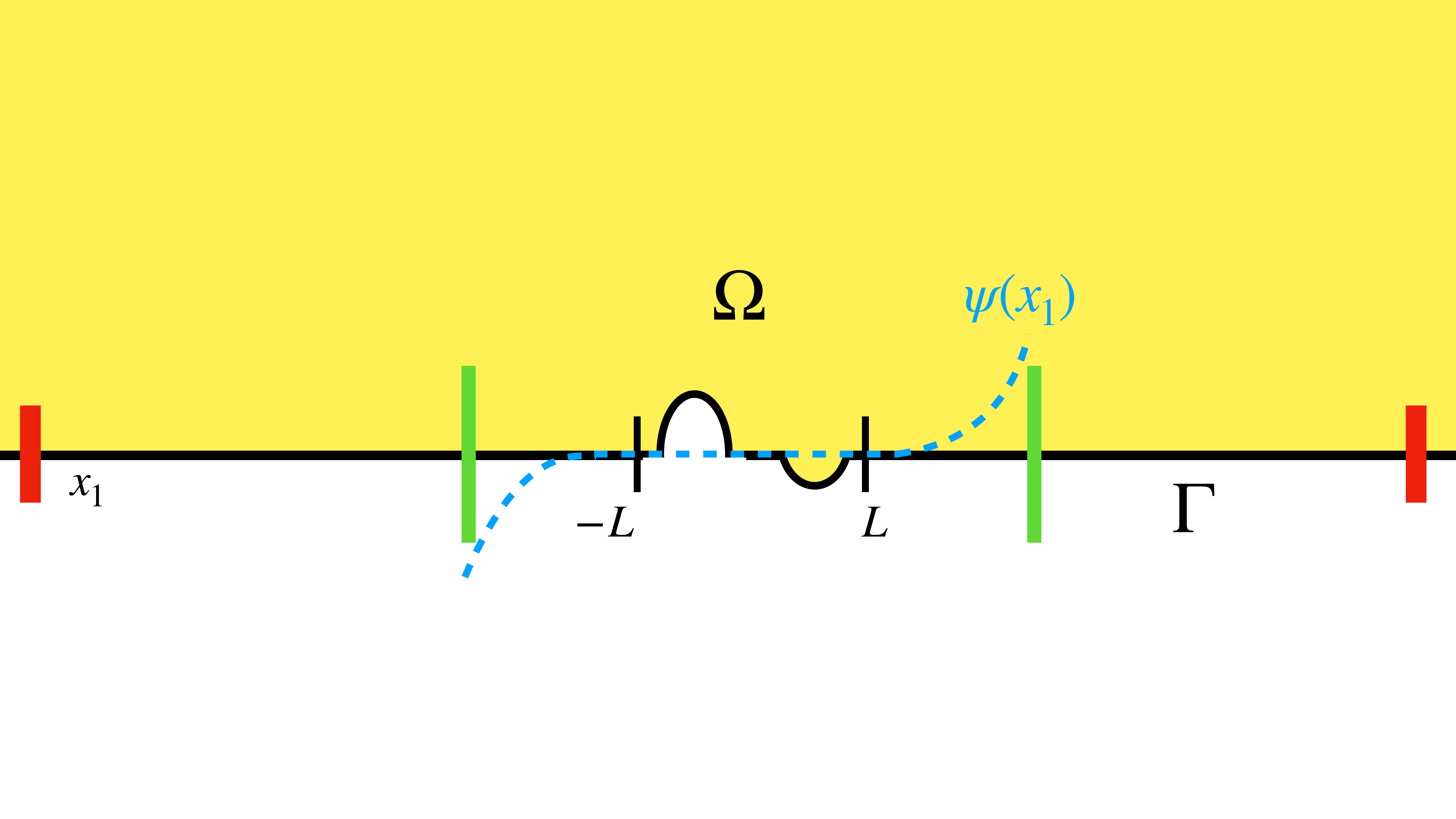}
\caption{The domain $\Omega$,  shown in yellow  is bounded below by the infinitely long boundary $\Gamma$. The boundary $\Gamma$ is flat outside the interval when $|x_1|> L$. The blue dashed line represents $\psi(x_1)$, the imaginary part of the  $x_1$-component of the deformed  complex boundary $\tilde \Gamma$. The short red lines indicate the truncation limits of the computational domain used for solving the BIE on $\Gamma$, while the long green lines indicate the smaller truncation limits used for solving the BIE on $\tilde \Gamma.$} 
    \label{fig:perturbed_half_space}
\end{figure}

A naive integral equation approach represents the solution in the upper half-plane using a double layer potential $u=D_{\Gamma,\kappa}[\sigma]$. Since it automatically satisfies the Helmholtz equation, one only needs to ensure that the boundary conditions are satisfied. Inserting this ansatz into the boundary condition, and using the standard jump relation of the double layer potential, we reduce the problem to solving  the following BIE
\begin{align}\label{eq:urep_helm_dir}
    -\frac{1}{2}\sigma(\bx)+D_{\Gamma,\kappa}[\sigma](\bx)=f(\bx), \quad \bx \in \Gamma.
\end{align}
For more details on the derivation of the jump relation and the use of the BIE method, we refer the reader to~\cite{kress_2014,colton_kress_inverse,colton_kress}. Unfortunately, for many problems of physical interest, both the data $f$ and the unknown dipole strength $\sigma$ decay slowly, typically like $1/\sqrt{|x_1|}$ or~$1/|x_1|^{3/2}$. Numerically this causes significant errors when the computational domain is naively truncated. In three dimensions, the surface $\Gamma$ is modeled by a perturbation of the $(x_1,x_2)$-plane. In this case one can show similarly that for a certain class of `outgoing' data, the dipole strength typically decays like $1/\|\bx\|$ as $\|\bx\| \to \infty.$

In the current example, complex scaling addresses this issue by deforming the boundary $\Gamma \subset \mathbb{R}^2$ into a complex curve $\tilde \Gamma \subset \mathbb{C}\times \mathbb{R}$, defined by 
\begin{align*}
    \tilde \Gamma =\{(x_1,x_2)\in \C\times \R | x_1=\gamma_1(t)+i\psi(t), x_2=\gamma_2(t), t\in \R\},
\end{align*}
where $\gamma$ is a parameterization of $\Gamma,$ and the function $\psi:\R\to \R$ is a smooth monotonic function vanishing on $[-L-\delta,L+\delta]$ for some $\delta >0,$ and satisfying $\psi(t)\to \pm \infty$ as $t\to \pm \infty$. The solution is then represented as a double layer potential defined over the deformed boundary,  written as $u=D_{\tilde \Gamma,\kappa}[\tilde \sigma].$ The corresponding BIE is
\begin{align}
\label{eqn:complex_BIE}
    -\frac{1}{2}\tilde \sigma(\bx)+D_{\tilde \Gamma,\kappa}[\tilde \sigma](\bx)=f(\bx), \quad \bx \in \tilde \Gamma,
\end{align}
where the data $f$ is assumed to admit an analytical continuation  to $\tilde \Gamma$.  On $\tilde \Gamma$, both the data and the dipole strength decay exponentially with
\begin{align}
\label{eqn:fast_decay_bound}
    |\tilde \sigma(\bx)|,|f(\bx)| \lesssim \frac{e^{-\kappa |\psi(x_1)|}}{\sqrt{|x_1|}} ,\quad x_1\to \infty.
\end{align}
The exponential decay in \eqref{eqn:fast_decay_bound} enables efficient truncation of the computational domain, with radius proportional to the logarithm of the desired accuracy. A schematic illustration of this truncation is given in Figure~\ref{fig:perturbed_half_space}. Complex scaling can also be applied to transmission problems with unbounded interfaces, studied in \cite{lu2018perfectly,goodwill2024}. For linearized water wave equations, the paper \cite{doi:10.1137/23M1607866} describes a related method for the two-dimensional finite depth case.

To achieve high accuracy, the discretized BIE system  typically contains a large number of variables, making direct solvers computationally expensive.  Though a compressed representation of the inverse discretized operator can be constructed using a modification of the methods in \cite{doi:10.1137/16M1095949,sushnikova2023fmm}, this method currently lacks theoretical guarantees and in practice incurs significant memory costs, limiting the size of discretization.  On the other hand, iterative solvers such as GMRES~\cite{Saad-gmres-1986}  are more memory efficient and easy to implement, but naively require the application of the full $N\times N$ dense system matrix.  The complex-coordinate FMM described in this paper can accelerate this computation, reducing the cost of applying the matrix from $\cO(N^2)$ to $\cO(N)$.

\subsection{The classical FMM}
\label{sec:FMMs_intro}
After discretizing the integral equation~\eqref{eq:urep_helm_dir}, numerical evaluation of the double layer operator requires computing sums of the form
\begin{equation}\label{eqn:double_disc}
    u(\bx_i)=\sum_{j=1}^M \partial_{\nu(\by_j)} G_\kappa(\bx_i,\by_j) \sigma(\by_j) w_j,
\end{equation}
where $\bx_1,\ldots\bx_N$ are {\it target locations} at which $u$ is to be evaluated, $\by_1,\ldots, \by_M$ are {\it source locations} or {\it quadrature nodes}, and $w_1,\ldots,w_M$ are the associated {\it quadrature weights}.  We note that in practice, one frequently adds quadrature corrections \cite{doi:10.1137/S0036142995287847,WuMartinsson2021,doi:10.1137/22M1520372,WuMartinsson2021b,hoskins2023quadraturesingularintegraloperators} to the sum in (\ref{eqn:double_disc}) to ensure high-order accuracy of the integrals when the targets lie on or near $\Gamma.$ Since these quadrature corrections are typically sparse, their application requires $\cO(N)$ operations and is not the dominant computational cost. Sums of the form (\ref{eqn:double_disc}) can be computed efficiently using the FMM. In order to make the presentation as self-contained as possible, in this section we give a brief non-technical introduction to the classical FMM, and its associated terminology. For a comprehensive overview of the classical FMM,  we refer the reader to \cite{b9199369fcb841c19f902432847cfa34}. 

For simplicity, we replace~\eqref{eqn:double_disc} by the $M$-body calculation
\begin{equation}
\label{eqn:evaluation_problem}
    u(\bx_i)=\sum_{j=1}^M G_\kappa(\bx_i,\by_j) \sigma_j,
\end{equation}
with charge strengths $\{\sigma_j\}_{j=1}^M \subset \C.$ The sum \eqref{eqn:double_disc} can be computed by differentiating the appropriate expansions below.

The FMM is based on two observations. First, that any $u$ that is the solution of the Helmholtz or Laplace equations in the ball~$\|\bx-\bc\|<R$ admits a \textit{local expansion}
\begin{equation}\label{eqn:local_exp}
    u(\bx) = \sum_{n=1}^\infty L_n f_n(\bx-\bc),
\end{equation}
for a known collection of functions $f_n$ depending only on the PDE. In 2-D, the expansion is such that
\begin{equation}
    u(\bx) = \sum_{n=1}^P L_n f_n(\bx-\bc) + \cO\left( \left(\frac rR\right)^P\right)
\end{equation}
in the ball~$\|\bx-
\bc\|<r<R$. In 3-D, the truncation error is~$\cO \left(\left(r/R\right)^{P/2}\right)$ under the same assumptions. In practice, for spherically symmetric kernels the functions $f$ will be a radial function times an angular basis function. The second observation is that if $u$ is a solution in the region~$\|\bx-\bc\|>R$, then it admits a \textit{multipole expansion}
\begin{equation}\label{eqn:multipole_exp}
    u(\bx) = \sum_{n=1}^\infty M_n g_n(\bx-\bc),
\end{equation}
where again the functions $g_n$ are a known collection of functions depending only on the PDE. 

Given these observations, the FMM proceeds as follows. First, split the sources and targets into a collection of boxes. Away from a given box, the contribution from the sources inside the box to \eqref{eqn:evaluation_problem} can be expressed as a truncated multipole expansion. The contribution of different boxes can then be merged efficiently by analytically \textit{translating} the expansions to a common center and adding their coefficients. We call this translation the \emph{multipole-to-multipole (M2M)} operator. We then build a local expansion for the field generated by sources in this merged box using an equivalent \emph{multipole-to-local (M2L)} operator. Local expansions for children of a box can be built from their parent expansion using a \emph{local-to-local (M2M)} operator. Finally, $u$ is given by the sum of the local expansion and the contribution of nearby sources. 

By efficiently managing which boxes to merge and which expansions to translate, the FMM computes the sum \eqref{eqn:evaluation_problem} in $\cO(M+N)$ time. In order to compute sums of the form~\eqref{eqn:double_disc}, we simply differentiate the multipole expansions. We can also compute derivatives of~$u$ by differentiating the local expansions.


\section{Analytic preliminaries}
\label{sec:analytic-preliminaries}
In this section, we introduce notations and review analytic results which will be used in the construction and analysis of our algorithm. In Section~\ref{sec:notations} we review complex polar and spherical coordinates in $\C^2$ and $\C^3$, respectively.  In Section~\ref{sec:special_functions} we summarize several key properties of certain special functions with complex arguments.

\subsection{Notations for complex variables}
\label{sec:notations}

The Green's functions, local expansions, and multipole expansions are all expressed in terms of polar or spherical coordinates. In order to proceed, we therefore define the analogous system for complex vectors.
\begin{definition}[Complex polar coordinates]
    Let $\bx=(x_1,x_2)\in \C^2$ with $\bx\not=0$. We define the complex polar coordinates $(r_x, \varphi_x) \in \C^2$ by
    \begin{equation*}
    \begin{cases}
         r_x  =\sqrt{x_1^2+x_2^2},  \\
         \cos \varphi_x  = \frac{x_1}{r_x}, \quad \sin\varphi_x = \frac{x_2}{r_x}. 
    \end{cases}
    \end{equation*}
    For $r_x\neq0$ we have $x_1 = r_x \cos \varphi_x$ and $x_2 = r_x \sin \varphi_x$.
\end{definition}
It is clear from the definition that the complex polar coordinates are an analytic continuation of the real ones. Moreover, we also note that with this definition, $e^{i\varphi_x} = \frac{x_1+ix_2}{r_x}$. For convenience, we will often write $\bx = (r_x, \varphi_x) \in \C^2$ in a slight abuse of notation. As in the usual polar coordinates, the angle~$\varphi_x$ is not uniquely defined. In what follows, however, we only ever use the quantities~$\cos \varphi_x,\sin\varphi_x,$  and~$e^{i\varphi_x}$, which are unique by construction, provided we do not cross the branch cut of the square root.

\begin{remark}
    It is important to note that for~$\bx\in \C^2$,~$r_x$ may be small, even if~$\|\bx\|_{\C^2}=\sqrt{|x_1|^2+|x_2|^2}$ isn't. Indeed, if $\bx = (1,i)$, then~$r_x = 0$. In what follows, we will introduce assumptions on~$\bx$ that control the imaginary part of~$\bx$ in terms of its real part. This will, among other things, avoid this feature of the complex polar coordinates.
\end{remark}

For $\bx \in \C^3,$ we have the following analytic continuation of the standard spherical coordinates.
\begin{definition}[Complex spherical  coordinate]
Let $\bx=(x_1,x_2,x_3)\in \C^3$ with $(x_1,x_2)\not = (0,0)$. We define $(\rho_x, \theta_x, \phi_x) \in \mathbb{C}^3$ such that
    \begin{equation*}
    \begin{cases}
       \rho_x  =\sqrt{x_1^2+x_2^2+x_3^2},  \\
        \cos \theta_x  = \frac{x_3}{\rho_x}, \quad \sin \theta_x = \sqrt{1-\cos^2\theta_x},\\ 
        \cos \phi_x = \frac{x_1}{\sqrt{x_1^2+x_2^2}}, \quad  \sin \phi_x = \frac{x_2}{\sqrt{x_1^2+x_2^2}}.
    \end{cases}
    \end{equation*}
Then $x_1 = \rho_x \sin \theta_x \cos \phi_x$, $x_2 = \rho_x \sin \theta_x \sin \phi_x$, and $x_3 = \rho_x \cos \theta_x$.
\end{definition}
We again have $e^{i\phi_x}=\frac{x_1+ix_2}{\sqrt{x_1^2+x_2^2}}$, 
and write $\bx = (\rho_x,\theta_x,\phi_x) \in \C^3$.

\subsection{Properties of classical special functions}
\label{sec:special_functions}
This section summarizes several standard properties of Bessel functions, Hankel functions, and spherical harmonics which will be used in construction of the algorithm. We start with identities involving both the Bessel and Hankel functions, known as \emph{addition formulas}. These formulas will form the basis for the multipole expansions and translation operators used in the 2-D Helmholtz FMM.
\begin{theorem}[Graf's Addition Formula~\cite{abramowitz+stegun}]
\label{thm:graf2d}
Let $z= \sqrt{u^2+w^2-2uw\cos\alpha}$ with $u,w,\alpha\in \C$. Suppose $\beta \in \C$ satisfies $u-w\cos\alpha=z\cos\beta$ and $w\sin\alpha=z\sin\beta$.  Then for any integer $m$, we have
\begin{align}
J_m(z) e^{i m \beta} &= \sum_{n=-\infty}^\infty J_{m+n}(u) J_n(w) e^{i n \alpha}, \label{eqn:addition_for_J} \\
H_m^{(1)}(z) e^{i m \beta} &= \sum_{n=-\infty}^\infty H_{m+n}^{(1)}(u) J_n(w) e^{i n \alpha}, \quad \text{if } |w e^{\pm i \alpha}| < |u|. \label{eqn:addition_for_H}
\end{align}
In particular, the truncated expansion for $H_0^{(1)}(z)$ satisfies
\begin{equation}
\label{eqn:hankel_error}
    \left \vert H^{(1)}_0(z)-\sum_{n=-P}^P H^{(1)}_{n}(u) J_n(w) e^{in \alpha} \right\vert  \lesssim  \frac{2}{ P \pi (c-1)} \left(\frac{1}{c}\right)^{P}. 
\end{equation}
where $c=\max\{|v e^{i \alpha}|/|u|,|w e^{-i \alpha}|/|u|\}^{-1}>1.$
\end{theorem}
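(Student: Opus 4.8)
The two identities \eqref{eqn:addition_for_J} and \eqref{eqn:addition_for_H} are the classical Graf addition formulas, so the plan is to take these as given (as cited from \cite{abramowitz+stegun}) and devote the argument entirely to the quantitative truncation bound \eqref{eqn:hankel_error}, which is the new content. First I would specialize \eqref{eqn:addition_for_H} to $m=0$: the factor $e^{im\beta}$ disappears and, under the stated hypothesis $|we^{\pm i\alpha}|<|u|$, one has $H_0^{(1)}(z)=\sum_{n=-\infty}^\infty H_n^{(1)}(u)J_n(w)e^{in\alpha}$. The truncation error is then exactly the tail
\begin{equation*}
H_0^{(1)}(z)-\sum_{n=-P}^{P}H_n^{(1)}(u)J_n(w)e^{in\alpha}=\sum_{|n|>P}H_n^{(1)}(u)J_n(w)e^{in\alpha},
\end{equation*}
so the task reduces to bounding this tail term by term in absolute value.

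The engine of the estimate is the large-order behavior of the summands. First I would record the symmetry relations $H_{-n}^{(1)}(u)=(-1)^nH_n^{(1)}(u)$ and $J_{-n}(w)=(-1)^nJ_n(w)$, which fold the negative indices onto the positive ones: writing $n=-k$ with $k>0$, the product $H_{-k}^{(1)}(u)J_{-k}(w)$ equals $H_k^{(1)}(u)J_k(w)$, while the phase factor becomes $|e^{-ik\alpha}|=e^{k\,\mathrm{Im}\,\alpha}$ rather than $e^{-n\,\mathrm{Im}\,\alpha}$. Next I would invoke the Debye (uniform large-order) asymptotics
\begin{equation*}
J_n(w)\sim\frac{1}{\sqrt{2\pi n}}\left(\frac{ew}{2n}\right)^n,\qquad H_n^{(1)}(u)\sim-i\sqrt{\frac{2}{\pi n}}\left(\frac{2n}{eu}\right)^n,
\end{equation*}
whose product collapses to $H_n^{(1)}(u)J_n(w)\sim-\frac{i}{\pi n}(w/u)^n$. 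Writing $c_\pm^{-1}=|we^{\pm i\alpha}|/|u|$ and noting $c=\min(c_+,c_-)$, and combining with $|e^{in\alpha}|=e^{-n\,\mathrm{Im}\,\alpha}$, I obtain the per-term bounds $|H_n^{(1)}(u)J_n(w)e^{in\alpha}|\lesssim\frac{1}{\pi n}c_+^{-n}$ for $n>0$ and, for the reflected index $n=-k$, $\frac{1}{\pi k}c_-^{-k}$.

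Finally I would sum the two geometric-type series. Bounding $1/n\le1/P$ for $n>P$ and summing each geometric tail gives $\sum_{n>P}c_\pm^{-n}=c_\pm^{-P}/(c_\pm-1)$; since $c_\pm\ge c>1$ this is at most $c^{-P}/(c-1)$, and adding the contributions of the positive and reflected indices produces the factor of $2$, yielding $\frac{2}{\pi P(c-1)}c^{-P}$ exactly as in \eqref{eqn:hankel_error}. The hard part will be rigor in the asymptotic step: the Debye expansions carry a relative error of size $O(1/n)$, and for the fixed complex arguments $u,w$ I must confirm these expansions hold uniformly in $n$ so that the factors $1+O(1/n)$ are genuinely bounded and can be absorbed into the implied constant of $\lesssim$. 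This is the classical fixed-argument, large-order regime, so the uniformity is standard, but it is the one place where I would appeal to precise uniform bounds on $J_n$ and $H_n^{(1)}$ rather than their leading-order asymptotics alone, and where the definition $c=\max\{\cdots\}^{-1}$ in the statement must be read as $\min(c_+,c_-)$.
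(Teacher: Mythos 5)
Your proposal is correct and takes essentially the same route as the paper's proof: both treat the Graf identities \eqref{eqn:addition_for_J}--\eqref{eqn:addition_for_H} as classical and derive \eqref{eqn:hankel_error} from the large-order, fixed-argument asymptotics of $J_n$ and $H_n^{(1)}$ (DLMF 10.19.1--10.19.2), the reflection relations $J_{-n}=(-1)^nJ_n$ and $H_{-n}^{(1)}=(-1)^nH_n^{(1)}$, and the triangle inequality with geometric tail sums. You merely spell out details the paper leaves implicit --- the product collapse $H_n^{(1)}(u)J_n(w)\sim -\tfrac{i}{\pi n}(w/u)^n$, the folding of negative indices onto the phases $e^{\pm in\alpha}$, the $1/n\le 1/P$ step producing the $\tfrac{2}{\pi P(c-1)}c^{-P}$ bound --- and you correctly read the statement's typo by interpreting $c=\min(c_+,c_-)$ with $c_\pm^{-1}=|we^{\pm i\alpha}|/|u|$.
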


\begin{proof}
Equations~\eqref{eqn:addition_for_J} and~\eqref{eqn:addition_for_H} are classical results from Graf's addition theorem~\cite{watson1995treatise}. The error estimate~\eqref{eqn:hankel_error} follows from the asymptotic expressions  (cf. 10.19.1, 10.19.2 in \cite{NIST:DLMF})
\begin{equation}
\label{eqn:bessel_asymp}
    \lim_{v\to +\infty} J_v(z)\cdot \left(\frac{2n}{ev}\right)^v \cdot \sqrt{2\pi v}=1,\quad \quad \lim_{v\to +\infty} H_v^{(1)}(z)\cdot \left(\frac{ez}{2n}\right)^v\cdot \frac{\sqrt{\pi v}}{\sqrt{2}}=-1,
\end{equation}
 the relations
\begin{equation*}
    J_{-n}(z)= (-1)^n J_n(z),\quad H_{-n}^{(1)}(z)= (-1)^n H_n^{(1)}(z), \quad n \in \mathbb{N}, 
\end{equation*}
and the triangle inequality.
\end{proof}

 For the 3-D Helmholtz problem, we require similar addition formulas for spherical Bessel functions and spherical Hankel functions defined by
\begin{equation*}
    j_v(z) = \sqrt{\frac{\pi}{2z}} J_{v+\frac{1}{2}}(z) \quad\text{and}\quad
     h_v^{(1)}(z) = \sqrt{\frac{\pi}{2z}} H^{(1)}_{v+\frac{1}{2}}(z),
\end{equation*}
respectively. They also have a corresponding addition formula, which for the spherical Hankel function of order zero ($\nu=0$) is given by the following theorem.
\begin{theorem}
Let $z = \sqrt{u^2+w^2-2uw\cos\alpha}$ where $u,w,\alpha\in \C$ and $|w e^{\pm  i \alpha}|/|u|<1$. Then, 
\begin{equation}
\label{eqn:addition_sph_hankel}
    h^{(1)}_0(z) = \sum_{n=0}^\infty (-1)^n (2n+1) j_n(u) h_n^{(1)}(w) P_n(\cos \alpha),
\end{equation}
where $P_n$ denotes the $n$-th degree Gauss-Legendre polynomial. Moreover, for fixed $u$ and $w$ there exists a $P\in \mathbb{N}$ such that with $(P+1)$ terms, the truncation error satisfies
\begin{equation*}
    \left \vert  h^{(1)}_0(z)-  \sum_{n=0}^P (-1)^n (2n+1) j_n(u) h_n^{(1)}(w) P_n(\cos \alpha)\right \vert \lesssim \frac{1}{\sqrt{2}|u|(c-1)} \left(\frac{1}{c}\right)^P,
\end{equation*}
for $c=\max\{|w e^{ i  \alpha}|/|u|,|w e^{ -i  \alpha}|/|u|\}^{-1}>1$
\end{theorem}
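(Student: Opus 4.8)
The plan is to follow the template of the preceding Graf formula: establish \eqref{eqn:addition_sph_hankel} first for real arguments, where it is the classical spherical (Gegenbauer) addition theorem, then promote it to the stated complex domain by analytic continuation, and finally read off the truncation bound from the large-order asymptotics already recorded in \eqref{eqn:bessel_asymp}. A convenient preliminary is the factorization
\begin{equation*}
    z^2 = u^2 + w^2 - 2uw\cos\alpha = (u - we^{i\alpha})(u - we^{-i\alpha}),
\end{equation*}
which shows that on the domain $\{|we^{\pm i\alpha}| < |u|\}$ neither factor vanishes, so $z \neq 0$ and $h_0^{(1)}(z) = -i e^{iz}/z$ is holomorphic once a branch of $z$ is fixed by continuation from the real branch $z>0$; since $h_0^{(1)}$ is not even in $z$ this branch choice is genuine, but it is pinned down automatically because the right-hand side of \eqref{eqn:addition_sph_hankel} is manifestly single-valued. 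The same factorization exhibits $c$ as the natural geometric ratio of the expansion.

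For the identity, I would note that for real $u > w > 0$ and real $\alpha$, expression \eqref{eqn:addition_sph_hankel}—with the spherical Hankel factor carrying the larger radius, as forced by convergence—is the standard multipole expansion of the Helmholtz Green's function and follows from the plane-wave expansion of $h_0^{(1)}$ together with orthogonality of the $P_n$. Each summand is holomorphic in $(u,w,\alpha)\in\C^3$, and the per-term estimate below shows that the series converges uniformly on compact subsets of $\{|we^{\pm i\alpha}| < |u|\}$, so its sum is holomorphic there. As it agrees with $h_0^{(1)}(z)$ on the open real slice $\{u>w>0,\ \alpha\in\R\}$, a set of uniqueness for holomorphic functions on the connected domain, the identity theorem in several complex variables extends the equality to the whole domain.

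For the truncation error I would write the remainder as the tail $\sum_{n>P}$ of \eqref{eqn:addition_sph_hankel} and bound each summand. Substituting $j_n(z)=\sqrt{\pi/2z}\,J_{n+1/2}(z)$ and $h_n^{(1)}(z)=\sqrt{\pi/2z}\,H^{(1)}_{n+1/2}(z)$ into \eqref{eqn:bessel_asymp}, the $n^{n}$-type factors of the Bessel and Hankel pieces cancel against one another while the leftover $\sqrt{n}$ factors are absorbed by the weight $(2n+1)$, so that $(2n+1)$ times the product behaves like $\tfrac{1}{|u|}|w/u|^{n}$ to leading order. It remains to control the complexified Legendre factor: from the Laplace–Heine asymptotic for $P_n$ off the cut $[-1,1]$, using $\cos\alpha \pm \sqrt{\cos^2\alpha-1}=e^{\pm i\alpha}$, one gets $|P_n(\cos\alpha)| \lesssim \max\{|e^{i\alpha}|,|e^{-i\alpha}|\}^{n}$ up to algebraic factors in $n$. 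Multiplying the two estimates gives a per-term bound $\lesssim \tfrac{1}{|u|}(1/c)^{n}$ with $c$ exactly as stated, and summing the geometric tail $\sum_{n>P}(1/c)^{n}=(1/c)^{P}/(c-1)$ reproduces the claim up to the displayed constant.

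The step I expect to be the main obstacle is aligning the two exponential rates: I must show that the growth rate of $P_n(\cos\alpha)$ in $n$ is \emph{exactly} $\max\{|e^{i\alpha}|,|e^{-i\alpha}|\}$, so that it combines with the $|w/u|^{n}$ from the Bessel–Hankel product to yield precisely the ratio $1/c$ and nothing larger. It is this matching—rather than a crude separate bound on each factor—that produces the clean constant $c$, and it is exactly where the hypothesis $|we^{\pm i\alpha}|/|u|<1$ is used in full. The remaining analytic-continuation bookkeeping (uniform convergence and the branch of $z$) is then routine given the per-term bound.
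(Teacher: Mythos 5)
Your proposal is correct in substance, and its error analysis coincides with the paper's: both combine the large-order asymptotics \eqref{eqn:bessel_asymp} of the spherical Bessel--Hankel product (whose $n^n$-type factors cancel, leaving $(2n+1)\,|j_n\,h_n^{(1)}| \sim \tfrac{1}{\sqrt{2}\,|u|}\cdot(\text{ratio})^n$) with an exponential-in-$n$ bound on the complexified Legendre factor, then sum the geometric tail to get $\tfrac{1}{\sqrt{2}|u|(c-1)}(1/c)^P$. Where you diverge is in the two supporting steps. For the identity itself, the paper simply cites Gegenbauer's addition formula (9.1.80 in Abramowitz--Stegun), which is classically valid for complex arguments; you instead prove the real case and promote it via uniform-on-compacts convergence and the identity theorem in $\C^3$ --- more self-contained, and your factorization $z^2=(u-we^{i\alpha})(u-we^{-i\alpha})$ with the attendant branch discussion is a genuine addition that the paper leaves implicit. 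For the Legendre factor --- the step you flag as the main obstacle --- the paper uses no asymptotics at all: it proves the exact inequality $|P_n(\cos\alpha)|\le\max\{|e^{in\alpha}|,|e^{-in\alpha}|\}$ directly from the Laplace integral representation $P_n(\cos\alpha)=\frac{1}{\pi}\int_0^\pi(\cos\alpha+i\sin\alpha\cos\phi)^n\,\mathrm{d}\phi$, writing the integrand as the convex combination $\frac{1-\cos\phi}{2}e^{i\alpha}+\frac{1+\cos\phi}{2}e^{-i\alpha}$. This resolves your rate-matching worry with constant $1$ and no algebraic factors in $n$, and --- unlike the Laplace--Heine asymptotic you invoke, which requires $\cos\alpha\notin[-1,1]$ and so forces a separate (trivial, via $|P_n|\le 1$) treatment of real $\alpha$ --- it is uniform over all $\alpha\in\C$; adopting it would make your per-term bound exact rather than asymptotic. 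Finally, your reading that the spherical Hankel factor must carry the larger-modulus variable, ``as forced by convergence,'' agrees with the orientation actually used in the paper's own estimate (whose prefactor $1/|u|$ comes from $h_n^{(1)}$ evaluated at the large variable), so your silent reconciliation of the printed indices with the hypothesis $|we^{\pm i\alpha}|/|u|<1$ is consistent with the intended statement.
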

\begin{proof}
Formula~\eqref{eqn:addition_sph_hankel} is a special case of Gegenbauer's addition formula (cf. 9.1.80 in \cite{abramowitz+stegun}). We first establish the following bound on Gauss-Legendre polynomials:
\begin{equation}
\label{eqn:lege_bound}
    |P_n(\cos\alpha)| \le \max\{|e^{in\alpha}|,|e^{-in\alpha}|\}, \quad \text{ for } \alpha \in \C.
\end{equation} 
To see this we use the integral representation 
\begin{align*}
   P_n(\cos\alpha) = \frac{1}{\pi} \int_0^\pi (\cos\alpha+i\sin\alpha \cos\phi)^n\, \mathrm{d}\phi, 
\end{align*}
which corresponds to equation 14.12.8 in \cite{NIST:DLMF} for $m = 0$. 
Taking the modulus and applying the triangle inequality yields
  \begin{align*}
     |P_n(\cos\alpha)| \le \frac{1}{\pi} \int_0^\pi |\cos\alpha+i\sin\alpha \cos\phi|^n \, \mathrm{d}\phi.
 \end{align*}
Using the identities $\cos\alpha=\frac{1}{2}e^{i\alpha}+\frac{1}{2}e^{-i\alpha}$ and $\sin\alpha=\frac{1}{2i}e^{i\alpha}-\frac{1}{2i}e^{-i\alpha}$,  we obtain 
  \begin{align*}
     \left\vert \cos\alpha+i\sin\alpha\cos\phi \right\vert &= \left\vert \frac{1-\cos\phi}{2} e^{i\alpha} + \frac{1+\cos\phi}{2}e^{-i\alpha}\right\vert \le \frac{1-\cos\phi}{2} \left\vert e^{i\alpha}\right\vert + \frac{1+\cos\phi}{2}\left\vert  e^{-i\alpha}\right\vert\\ 
     &\le \max\{\left\vert  e^{i\alpha}\right\vert,\left\vert  e^{-i\alpha}\right\vert\},
 \end{align*}
from which the required bounds on $P_n$ follow straightforwardly. We also require the following asymptotic approximation that can be derived  from~\eqref{eqn:bessel_asymp}
\begin{equation}
    \lim_{v\to +\infty} j_v(z)\cdot \frac{2\cdot (2v+1)^{v+1}}{z^v e^{v+1/2}}=1,\quad \quad \lim_{v\to +\infty} h_v^{(1)}(z)\cdot \frac{e^{v+1/2}\cdot z^{v+1}}{\sqrt{2} \cdot (2v+1)^v}=1,
\end{equation}
which implies
\begin{equation*}
    |j_n(u)h^{(1)}_n(v)| \sim \frac{|v e^{\pm i\alpha}|^n}{\sqrt{2}(2n+1)|u|^{n+1}} \le \frac{1}{\sqrt{2} (2n+1) c^n}. 
\end{equation*}
The rest follows from the standard estimate of the sum of a geometric series. 
\end{proof}

\subsubsection{Spherical harmonics}
The \textit{associated Legendre polynomials} of order $n\ge 0$ and degree $0\le m\le n$ are defined as
\begin{equation*}
P_n^m(z) = (-1)^m(1-z^2)^{m/2} \frac{d^m}{dz^m} P_n(z)
\end{equation*}
for $z\in \C$. 
It is often convenient to consider a different normalization, defined by
\begin{equation*}
    \bar P^m_n(z) = \sqrt{\frac{(2n+1)}{4\pi} \frac{(n+m)!}{(n-m)!}} P_n^m(z),
\end{equation*}
which satisfies the orthogonality relation
\begin{equation}
\label{eqn:lenP_ortho}
      \int_{-1}^1 \bar P^m_n(z)   \bar P^m_{n'}(z) \, \mathrm{d}x=\frac{\delta_{n,n'}}{2\pi}. 
\end{equation}

With the above definitions, the standard definition of spherical harmonics for real variables admits the following natural extension to complex arguments.
\begin{definition}
[Spherical harmonics]
Let $n \geq 0$ and $-n \leq m \leq n$. The spherical harmonic $Y_n^m(\theta, \phi)$ is defined as
\begin{equation*}
    Y_n^m(\theta,\phi) =(-1)^m \sqrt{ \frac{(n-|m|)!}{(n+|m|)!}} P_n^{|m|}(\cos\theta) e^{im\phi}
\end{equation*}
where $\theta,\phi \in \C.$
\end{definition}

In the rest of this section we briefly summarize the properties of spherical harmonics, Legendre polynomials, and associated Legendre polynomials with complex arguments that will be used in our subsequent analysis. 

For the next results it will be convenient to define the following differential operators:
\begin{equation*}
    \partial_\pm = \frac{\partial}{\partial x_1}\pm i \frac{\partial}{\partial x_2}, \quad \partial_{x_3} = \frac{\partial }{\partial x_3}.
\end{equation*}
The next lemma follows directly from these definitions.
\begin{lemma}
\label{lma:harmonic}
If $u$ is a harmonic function of three variables, then 
\begin{equation*}
    \partial_+ \partial_-(u) = -\partial_{x_3}^2(u).
\end{equation*}
\end{lemma}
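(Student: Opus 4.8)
The plan is to prove the identity by direct expansion of the composite operator $\partial_+\partial_-$ and then invoke the harmonicity of $u$. First I would write out the product of the two first-order operators, treating them as genuine differential operators acting on the smooth function $u$:
\begin{equation*}
    \partial_+\partial_- u = \left(\frac{\partial}{\partial x_1}+i\frac{\partial}{\partial x_2}\right)\left(\frac{\partial}{\partial x_1}-i\frac{\partial}{\partial x_2}\right)u
    = \frac{\partial^2 u}{\partial x_1^2} - i\frac{\partial^2 u}{\partial x_1\,\partial x_2} + i\frac{\partial^2 u}{\partial x_2\,\partial x_1} + \frac{\partial^2 u}{\partial x_2^2}.
\end{equation*}
The two mixed-derivative terms carry opposite signs, so they cancel precisely when the mixed partials commute. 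This is the one place requiring a hypothesis, but it is benign: harmonic functions are real-analytic (in particular $C^2$ suffices), so Clairaut/Schwarz applies and $\partial_{x_1}\partial_{x_2}u=\partial_{x_2}\partial_{x_1}u$. After cancellation we are left with $\partial_+\partial_- u = \partial_{x_1}^2 u + \partial_{x_2}^2 u$, i.e. the two-dimensional Laplacian in the first two coordinates.

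The second and final step is to substitute the harmonicity of $u$. Since $\Delta u = \partial_{x_1}^2 u + \partial_{x_2}^2 u + \partial_{x_3}^2 u = 0$, rearranging gives $\partial_{x_1}^2 u + \partial_{x_2}^2 u = -\partial_{x_3}^2 u$, which combined with the expansion above yields $\partial_+\partial_- u = -\partial_{x_3}^2 u$, as claimed. There is essentially no genuine obstacle here: the only subtlety is ensuring enough regularity to commute the mixed partials, and this is automatic for solutions of Laplace's equation. I would remark that the identity holds verbatim for complex arguments as well, since the operators $\partial_\pm$ and the Laplacian are formal and the same algebra goes through after analytic continuation, which is the setting in which this lemma will be applied.
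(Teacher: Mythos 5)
Your proof is correct and is precisely the computation the paper has in mind: the paper gives no explicit proof, stating only that the lemma ``follows directly from these definitions,'' and your direct expansion of $\partial_+\partial_- u$ into the two-dimensional Laplacian followed by the substitution $\partial_{x_1}^2 u + \partial_{x_2}^2 u = -\partial_{x_3}^2 u$ is exactly that direct verification. Your remarks on Clairaut's theorem and on the identity persisting under analytic continuation are accurate and, if anything, slightly more careful than the paper's treatment.
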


In the construction of the FMM it is convenient to express the spherical harmonics as derivatives of~$\rho^{-1}$. The formulas below are given for real points in \cite{10.7551/mitpress/5750.001.0001} and follow for complex points by analyticity.
\begin{lemma}
\label{lma:spharmonics}
Let $\bx=(\rho,\theta,\phi)$, we have 
\begin{equation*}
    \frac{Y_n^m(\theta,\phi)}{\rho^{n+1}} = \begin{cases}
        A_n^m\cdot  \partial_+^m \partial_{x_3}^{n-m}\left(\frac{1}{\rho}\right) , & m>0,\\
        A_n^0 \cdot \frac{\partial^n}{\partial x_3^n}\left(\frac{1}{\rho}\right)  , &m=0,\\
        A_n^{-m}\cdot  \partial_-^{-m} \partial_{x_3}^{n+m}\left(\frac{1}{\rho}\right), & m<0.
    \end{cases}
    \quad 
    \text{ where }
    A_n^m=\frac{(-1)^n}{\sqrt{(n-m)!(n+m)!}}. 
\end{equation*}
\end{lemma}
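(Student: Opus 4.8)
The plan is to prove the complex identity by analytic continuation from the real case, as the preceding sentence anticipates. Concretely, I would (i) show that both sides extend to single-valued holomorphic functions of $\bx=(x_1,x_2,x_3)$ on a common connected domain $D\subset\C^3$ that contains an open subset of $\R^3$; (ii) invoke the classical real-variable identity of \cite{10.7551/mitpress/5750.001.0001}, which gives equality on $D\cap\R^3$; and (iii) upgrade this to equality on all of $D$ via the identity theorem for holomorphic functions of several complex variables. Because the constant $A_n^m$ is already pinned down by the real formula, it carries over automatically and requires no separate computation.

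For step (i), the right-hand side is immediate: once a branch of $\rho=\sqrt{x_1^2+x_2^2+x_3^2}$ is fixed, $1/\rho$ is holomorphic wherever $\rho\neq0$, and $\partial_\pm=\partial_{x_1}\pm i\partial_{x_2}$ and $\partial_{x_3}$ are constant-coefficient combinations of holomorphic partial derivatives, hence preserve holomorphy. For the left-hand side I would first eliminate the apparent $\phi$-ambiguity. Treating $m>0$ (the cases $m=0$ and $m<0$ being analogous), write $P_n^{(m)}:=\mathrm{d}^m P_n/\mathrm{d}z^m$, so that the definition of $P_n^m$ together with $\sin\theta=\sqrt{1-\cos^2\theta}$ gives $P_n^m(\cos\theta)=(-1)^m\sin^m\theta\,P_n^{(m)}(\cos\theta)$. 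Using the coordinate relations $\cos\theta=x_3/\rho$ and $\sin\theta\,e^{\pm i\phi}=(x_1\pm ix_2)/\rho$ (in which the $\sqrt{x_1^2+x_2^2}$ factors cancel), one obtains
\begin{equation*}
\frac{Y_n^m(\theta,\phi)}{\rho^{n+1}}
= c_{n,m}\,(x_1+ix_2)^m\,P_n^{(m)}\!\left(\frac{x_3}{\rho}\right)\frac{1}{\rho^{n+m+1}},
\qquad c_{n,m}=(-1)^m\sqrt{\tfrac{(n-m)!}{(n+m)!}}.
\end{equation*}
Since $P_n^{(m)}$ is a polynomial of definite parity $n-m$, collecting powers of $\rho^2=x_1^2+x_2^2+x_3^2$ shows the right side equals a constant times a homogeneous degree-$n$ polynomial in $(x_1,x_2,x_3)$ multiplied by $\rho^{-(2n+1)}$. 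The $\phi$-branch ambiguity has cancelled, and the expression is holomorphic wherever $\rho\neq0$ on the chosen branch. Thus both sides are holomorphic on $D=\{\bx\in\C^3:\rho^2\notin(-\infty,0]\}$, which is connected and meets $\R^3$ in the open set $\R^3\setminus\{0\}$.

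The crux is step (iii): agreement on the real slice $D\cap\R^3$, which is not open in $\C^3$, must be promoted to agreement on the full complex domain. This is the real-to-complex form of the identity theorem: if $f$ is holomorphic on a connected open $D\subseteq\C^n$ and vanishes on a nonempty open subset $V$ of $D\cap\R^n$, then $f\equiv0$ on $D$. The key observation justifying it is that for holomorphic $f$ the complex partial $\partial f/\partial z_j$, restricted to $\R^n$, coincides with the ordinary real partial derivative of $f|_{\R^n}$, since the complex difference-quotient limit may be taken along real increments. Consequently, applying this to $f=(\text{LHS}-\text{RHS})$, all complex partials of $f$ vanish on $V$, so the Taylor series of $f$ at any point of $V$ is identically zero; hence $f\equiv0$ near $V$ and therefore on the connected $D$.

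The main obstacle is the bookkeeping in step (i): one must verify that the square-root branch ambiguities in both $\rho$ and $\phi$ genuinely cancel, so that each side is single-valued and holomorphic on one connected domain rather than merely locally. Once this is established, the classical real identity and the real-to-complex identity theorem close the argument with no further effort, and the equality holds throughout the domain on which the complex spherical coordinates are defined.
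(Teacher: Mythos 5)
Your proposal is correct and takes essentially the same route as the paper, whose entire proof is the remark that the formulas "are given for real points in \cite{10.7551/mitpress/5750.001.0001} and follow for complex points by analyticity." You have simply carried out that analytic-continuation step in full detail—checking via the solid-harmonic factorization that both sides are single-valued and holomorphic on a connected domain meeting $\R^3$, then applying the real-to-complex identity theorem—which is a faithful (and more careful) elaboration of the paper's one-line argument.
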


Let $\theta_1, \phi_1$ and $\theta_2, \phi_2$ be the spherical angles of two vectors in $\C^3$. Then the addition formula
\begin{equation}
\label{eqn:sphaddition}
    P_n(\cos\alpha) =\sum_{m=-n}^n (Y_n^m)^*(\theta_1,\phi_1) Y_n^m(\theta_2,\phi_2)
\end{equation}  
holds for $\cos\alpha=\cos\theta_1\cos\theta_2+\sin\theta_1\sin\theta_2\cos(\phi_1-\phi_2)$ \cite{whittaker1996course}. This identity extends to complex angles via analytic continuation, and is required for the construction of translation operators for the 3-D Laplace and Helmholtz FMMs. 

\section{The 2-D complex-coordinate FMM}
\label{sec:zfmm2d}
In this section, we describe the two‐dimensional FMM with complex coordinates for evaluating $N$-body calculations associated with both the Laplace and Helmholtz Green's functions.  
First, in Section ~\ref{sec:geo_conditon_2d}, we state a general geometric condition on source and target locations, as well as the the expansion centers, which guarantees the validity of the analytical continuation.  
Then in Section~\ref{sec:fmm_lap2d} and Section~\ref{sec:fmm_helm2d}, we introduce the building blocks of the FMM: the construction of multipole and local expansions and the translation operators between these expansions. 
We emphasize that the analytic form of these operators is essentially identical to the classical FMM. The main difference is the modification of the proofs of convergence to allow for complex coordinates.

\subsection{Geometric assumptions}
\label{sec:geo_conditon_2d}
We begin with a geometric condition in  $\C^2$, then prove a technical lemma which can be used to derive the addition formulas for the 2-D Laplace and Helmholtz kernels.
\begin{assumption}[2-D Assumption]
\label{assump:2d}
We assume that the source locations, target locations, and expansion centers are vectors in $\C^2$, satisfying the constraint
\begin{align*}
\Im(\bx) = \psi(\Re \bx), \quad \bx \in \C^2,
\end{align*}
where $\psi:\R^2\to \R^2$ is a map with Lipschitz constant $L<1$ such that 
\begin{align*}
    \| \psi(\by_1)-\psi(\by_2)\| \le L\|\by_1-\by_2\|, \quad \forall \by_1,\by_2\in \R^2.
\end{align*}
We let $\Vpsi\subset \C^2$ denote the set of points satisfying the constraint
\begin{equation*}
\Im \bx = \psi(\Re \bx), \quad \bx \in \C^2.
\end{equation*}
\end{assumption}
Let  $\bx=(r_x,\varphi_x)$ and $\by=(r_y,\varphi_y)\in \Vpsi$ be the target and source locations, respectively, expressed in polar coordinates. We denote their difference as $\bx-\by = (r_{xy},\varphi_{xy})$. It is not hard to verify that
\begin{equation*}
    r_{xy} = \sqrt{r_x^2+r_y^2-2r_xr_y\cos(\varphi_y-\varphi_x)}. 
\end{equation*}
In what follows, we will translate expansions centers from~$\bx$ to~$\by$ using Graf's addition formula \eqref{eqn:addition_for_H} with $z=r_{xy},u=r_x,w=r_y$ and $\alpha=\varphi_y-\varphi_x$. We saw earlier that, a sufficient condition for the convergence of \eqref{eqn:addition_for_H} is 
$|r_x e^{\pm i\alpha}|/|r_y|<1$. We will show below that this condition is also sufficient for the Laplace expansions. The following lemma demonstrates that this condition is satisfied when the real parts of \(\Re \bx\) and \(\Re \by\) are sufficiently separated and the Lipschitz constant $L$ is not too large.

\begin{lemma}
\label{lma:bound_for_2d}
Let $\bx, \by \in \Vpsi$. If 
\begin{equation}
\label{eqn:real_separate}
    \|\Re \bx\| > R, \quad \|\Re \by\| < r,
\end{equation}
for some constants $0 < r < R$ and the Lipschitz constant $L$ of $\psi$ satisfies
\begin{equation}
\label{eqn:2dLipcond}
    L < \frac{R - r}{R + r},
\end{equation}
then for $\alpha = \varphi_y - \varphi_x$, we have
\begin{equation}
\label{eqn:cplx_separate}
    \left| \frac{r_y e^{\pm i\alpha}}{r_x} \right| < 1.
\end{equation}
\end{lemma}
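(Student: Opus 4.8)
The plan is to reduce the two claimed inequalities in \eqref{eqn:cplx_separate} to a pair of inequalities between the moduli of the \emph{null coordinates} $x_1 \pm i x_2$ and $y_1 \pm i y_2$, and then to estimate these moduli by cleanly separating the real and imaginary parts of $\bx$ and $\by$ and invoking the Lipschitz bound on $\psi$.

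First I would rewrite $r_y e^{\pm i\alpha}/r_x$ explicitly. Since $e^{i\varphi_x} = (x_1 + ix_2)/r_x$ and, from $\cos\varphi_x = x_1/r_x$, $\sin\varphi_x = x_2/r_x$, also $e^{-i\varphi_x} = (x_1 - ix_2)/r_x$ (and likewise for $\by$), the relation $\alpha = \varphi_y - \varphi_x$ together with the cancellation $r_x^2 = (x_1+ix_2)(x_1-ix_2)$ gives
\begin{equation*}
  \frac{r_y e^{i\alpha}}{r_x} = \frac{y_1 + iy_2}{x_1 + ix_2}, \qquad \frac{r_y e^{-i\alpha}}{r_x} = \frac{y_1 - iy_2}{x_1 - ix_2}.
\end{equation*}
Thus \eqref{eqn:cplx_separate} is equivalent to the two modulus inequalities $|y_1 \pm iy_2| < |x_1 \pm ix_2|$.

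The heart of the argument is to show that these moduli decouple the real and imaginary parts of the points. Writing $x_j = \Re x_j + i\,\Im x_j$ and using the constraint $\Im\bx = \psi(\Re\bx)$, a direct computation shows that if one encodes a real planar vector $\bt = (t_1,t_2)$ as the complex scalar $t_1 + it_2$, then
\begin{equation*}
  |x_1 + ix_2| = \bigl| w + i\, p \bigr|, \qquad |x_1 - ix_2| = \bigl| w - i\, p \bigr|,
\end{equation*}
where $w$ and $p$ are the complex encodings of $\Re\bx$ and $\psi(\Re\bx)$, so that $|w| = \|\Re\bx\|$ and $|p| = \|\psi(\Re\bx)\|$. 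This splitting is the step I expect to be the main obstacle: it requires carefully tracking the four real scalars $\Re x_j,\ \Im x_j$ through the algebra, and it is exactly where the graph structure of $\Vpsi$ enters. Granting it, the reverse and forward triangle inequalities yield
\begin{equation*}
  |x_1 \pm ix_2| \ge \|\Re\bx\| - \|\psi(\Re\bx)\|, \qquad |y_1 \pm iy_2| \le \|\Re\by\| + \|\psi(\Re\by)\|.
\end{equation*}

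Finally I would invoke the Lipschitz hypothesis. Since the origin is the common expansion center and lies in $\Vpsi$, we have $\psi(\mathbf 0) = \mathbf 0$, so $\|\psi(\bt)\| \le L\|\bt\|$ for every $\bt$; in particular $\|\psi(\Re\bx)\| \le L\|\Re\bx\|$ and $\|\psi(\Re\by)\| \le L\|\Re\by\|$. Combining these with the bounds above and the separation \eqref{eqn:real_separate} gives
\begin{equation*}
  |y_1 \pm iy_2| \le (1+L)\|\Re\by\| < (1+L) r, \qquad |x_1 \pm ix_2| \ge (1-L)\|\Re\bx\| > (1-L) R.
\end{equation*}
The Lipschitz condition \eqref{eqn:2dLipcond} rearranges exactly to $(1+L) r < (1-L) R$, so $|y_1 \pm iy_2| < |x_1 \pm ix_2|$, which is \eqref{eqn:cplx_separate}. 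The one point to flag is the reliance on $\psi(\mathbf 0) = \mathbf 0$: without it (e.g.\ for a large constant $\psi$) the individual bounds fail and the statement is in fact false, so this normalization—legitimate because one may translate the expansion center to the origin—is essential.
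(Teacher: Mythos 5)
Your proof is correct and follows essentially the same route as the paper's: both reduce \eqref{eqn:cplx_separate} to $|y_1 \pm iy_2| < |x_1 \pm ix_2|$ via $e^{i\alpha} = \frac{(x_1-ix_2)(y_1+iy_2)}{r_x r_y}$ and then establish $|y_1 \pm iy_2| \le (1+L)\|\Re \by\|$ and $|x_1 \pm ix_2| \ge (1-L)\|\Re \bx\|$, the paper doing so by writing $\Re\by$ and $\Im\by$ in real polar coordinates and bounding $\sin(\varphi_a-\varphi_b)$ by $1$, which is the same triangle-inequality estimate as your identity $|y_1\pm iy_2| = |w \pm ip|$. Your explicit flag that the normalization $\psi(\mathbf{0})=\mathbf{0}$ is needed to get $\|\Im\by\|\le L\|\Re\by\|$ is a good catch: the paper uses this tacitly (its bound $r_b \le L r_a$ requires it), and it is justified exactly as you say, since the expansion center lies in $\Vpsi$.
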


\begin{proof}
Using complex polar coordinates, we write
\begin{equation*}
\begin{split}
   \cos\alpha & = \cos\varphi_x \cos\varphi_y +\sin \varphi_x \sin\varphi_y=\frac{x_1x_2+y_1y_2}{r_xr_y}, \\ 
   \sin\alpha &= -\sin\varphi_x \cos\varphi_y +\cos \varphi_x \sin\varphi_y
   =\frac{-x_2y_1+x_1y_2}{r_xr_y}. 
\end{split}
\end{equation*}
Thus, the exponential 
\begin{equation*}
    e^{ i\alpha} = \frac{(x_1y_1+x_2y_2)+ i(-x_2y_1+x_1y_2)}{r_xr_y} \nonumber 
    =\frac{(x_1-ix_2)}{r_x} \frac{(y_1+iy_2)}{r_y} = e^{i(\varphi_y-\varphi_x)}. 
\end{equation*}
We  write the square of the modulus on the left-hand side of \eqref{eqn:cplx_separate} as 
\begin{equation*}
    \frac{|r_ye^{\pm i\alpha}|^2}{|r_x|^2} = \frac{|r_ye^{\pm i\varphi_y}|^2}{|r_x e^{\pm i\varphi_x}|^2}  = \frac{|y_1\pm iy_2|^2}{|x_1\pm ix_2|^2}. 
\end{equation*}
Let $\Re \by = (r_a, \varphi_a)$ and $\Im \by = (r_b, \varphi_b)$ in polar coordinates, with $r_a = \|\Re \by\|$ and $r_b = \|\Im \by \|$. Then,
\begin{equation*}
    |y_1 \pm i y_2|^2 = r_a^2 + r_b^2 \pm 2 r_a r_b \sin(\varphi_a - \varphi_b).
\end{equation*}
By the Lipschitz condition, $r_b \leq L r_a$, and hence
\begin{equation*}
    |y_1 \pm i y_2|^2 \leq (1 + L)^2 r_a^2 \leq (1 + L)^2 r^2.
\end{equation*}
Likewise, we have
\begin{equation*}
    |x_1 \pm i x_2|^2 \geq (1 - L)^2 \|\Re \bx\|^2 \geq (1 - L)^2 R^2,
\end{equation*}
which implies
\begin{equation*}
    \frac{|y_1 \pm i y_2|}{|x_1 \pm i x_2|} \leq \frac{1 + L}{1 - L} \cdot \frac{r}{R} < 1.
\end{equation*}
This proves the desired inequality.
\end{proof}

\begin{remark}
    In Lemma~\ref{lma:bound_for_2d}, we assume that imaginary parts of both $\bx$ and $\by$ are given by the same function $\psi$. This Lemma and our results below can be generalized to allow the imaginary parts of $\bx$ and $\by$ to be given by two different functions of the real part. In that case, both Lipschitz constants must satisfy the same constraint. For the sake of clarity, we will assume that both target and source locations use the same  `$\psi$'. 
\end{remark}

 Having described our point geometries, we now introduce the expansion and translation operators. The 2-D complex-coordinate FMM for Laplace equation is covered in Section~\ref{sec:fmm_lap2d}. The 2-D Helmholtz equation is covered in Section~\ref{sec:fmm_helm2d}. The details of the tree construction and the overall algorithm are deferred to Section~\ref{sec:tree_algorithm}. 

\subsection{The 2-D complex-coordinate Laplace FMM}
\label{sec:fmm_lap2d}

For the 2-D Laplace equation, the analytic continuation of the Green's function is given by 
\begin{equation*}
    G(\bx,\by) =\frac{1}{2\pi} \log \frac{1}{r_{xy}}, \quad \bx,\by \in \C^2.
\end{equation*}
Let the source locations be $\{\by_{j}=(r_{y_j},\varphi_{y_j})\}_{j=1}^N \subset \Vpsi$, with corresponding charge strengths $\{\sigma_j\}_{j=1}^N \subset \C$. Dropping an overall constant of~$-(2\pi)^{-1}$ (likewise for the other kernels), we focus on evaluating the following sum
\begin{equation}
\label{eqn:lap_2d_pot}
    u(\bx) = \sum_{j=1}^N \log (r_{xy_j}) \sigma_j, \quad \bx\in \Vpsi, \quad \text{ where }  \bx-\by_j=(r_{xy_j},\varphi_{xy_j}).
\end{equation}

\subsubsection{Multipole and Local Expansions}
The separation of variables formula we require for the 2-D Laplace kernel is based on the following addition formula. 
\begin{lemma}[Addition Formula]
\label{lma:addtion_2d_laplace}
Suppose that Assumption \ref{assump:2d} is satisfied and let $\bx=(r_x,\varphi_x),\by=(r_y,\varphi_y) \in \Vpsi$.  If
$|r_y e^{\pm i(\varphi_x-\varphi_y)}|<|r_x|,$
then
\begin{equation}\label{eqn:lap2d_multi}
    \log r_{xy} = \log r_x-\frac{1}{2} \sum_{n=1}^\infty \frac{1}{n} \frac{r_y^n e^{-in\varphi_y}}{r_x^n} e^{in\varphi_x}-\frac{1}{2}\sum_{n=1}^\infty \frac{1}{n} \frac{r_y^n e^{in\varphi_y}}{r_x^n} e^{-in\varphi_x}
\end{equation}
where $r_{xy}=\sqrt{(x_1-y_1)^2+(x_2-y_2)^2}=\sqrt{r_x^2+r_y^2-2r_xr_y\cos(\varphi_x-\varphi_y)}$. 
Moreover, for any $P> 0$, the truncation error satisfies
\begin{equation*}
   \left \vert \log(r_{xy})- \left(\log r_x-\frac{1}{2} \sum_{n=1}^P \frac{1}{n} \frac{r_y^n e^{-in\varphi_y}}{r_x^n} e^{in\varphi_x}-\frac{1}{2}\sum_{n=1}^P \frac{1}{n}  \frac{r_y^n e^{in\varphi_y}}{r_x^n} e^{-in\varphi_x}\right) \right\vert 
   \le  \frac{1}{c-1} \left(\frac{1}{c}\right)^P. 
\end{equation*}
where $c=|r_x|/|r_y|\min\{|e^{i(\varphi_x-\varphi_y)}|, |e^{-i(\varphi_x-\varphi_y)}|\}>1.$
\end{lemma}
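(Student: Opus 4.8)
The plan is to reduce the claimed identity to the elementary Mercator series $\log(1-w) = -\sum_{n\ge1} w^n/n$ by an explicit factorization of the complexified distance, and then to control the branch of the complex logarithm by a continuity argument. I would not route this through Graf's formula as in the Helmholtz case; the Laplace kernel admits a direct and cleaner treatment.

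First I would set $t = r_y/r_x$ and $\theta = \varphi_x - \varphi_y$ and rewrite $r_{xy}^2 = r_x^2 + r_y^2 - 2r_xr_y\cos\theta$. Factoring out $r_x^2$ and using $\cos\theta = \tfrac12(e^{i\theta}+e^{-i\theta})$ gives the key factorization $r_{xy}^2 = r_x^2(1 - te^{i\theta})(1-te^{-i\theta})$. The hypothesis $|r_y e^{\pm i\theta}| < |r_x|$ is precisely $|te^{\pm i\theta}| < 1$, which places each factor $1 - te^{\pm i\theta}$ in the open disk of radius one about $1$, hence in the right half-plane, where the principal logarithm is analytic and the Mercator series converges. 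Substituting $\log(1-te^{\pm i\theta}) = -\sum_n (te^{\pm i\theta})^n/n$ and unwinding $t^ne^{\pm in\theta} = r_y^n r_x^{-n} e^{\mp in\varphi_y}e^{\pm in\varphi_x}$ reproduces the two stated sums, with the minus signs emerging from the sign of the Mercator series.

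The one genuinely delicate point — and the step I expect to be the main obstacle — is the branch of the complex logarithm: passing from $\log r_{xy}^2$ to $2\log r_{xy}$ and splitting the logarithm of the product into a sum is a priori only valid up to an additive $2\pi i k$. I would resolve this by continuity rather than by tracking arguments explicitly. Since each factor stays in the right half-plane, its argument lies in $(-\pi/2,\pi/2)$, so the two arguments sum to something in $(-\pi,\pi)$ and the principal square root is multiplicative; consequently $\exp$ of the proposed right-hand side equals $r_{xy}$ exactly, i.e. the identity holds modulo $2\pi i$. To pin down $k=0$, scale $r_y \to 0$ along the segment $s \mapsto st$, $s\in[0,1]$, with $r_x$ and $\theta$ fixed: the segment remains in the disk $|w|<1$, both sides vary continuously and never become singular (note $r_{xy}\neq 0$ because neither factor vanishes), and at $s=0$ both sides reduce to $\log r_x$. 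The discrepancy is a continuous $2\pi i\,\mathbb{Z}$-valued function, hence constant, hence equal to its value $0$ at $s=0$. Equivalently, this is analytic continuation from the real, well-separated configuration in which the identity is the classical real Laplace multipole expansion.

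Finally, for the truncation estimate I set $q = \max\{|te^{i\theta}|,|te^{-i\theta}|\} = 1/c < 1$, noting that $c = |r_x|\,|r_y|^{-1}\min\{|e^{i\theta}|,|e^{-i\theta}|\}$ because $1/\max\{|e^{i\theta}|,|e^{-i\theta}|\} = \min\{|e^{-i\theta}|,|e^{i\theta}|\}$, and that $c>1$ is equivalent to the convergence hypothesis. Bounding the two tails termwise with $1/n \le 1$ and $|te^{\pm i\theta}|\le q$ gives $\tfrac12\sum_{n>P}q^n + \tfrac12\sum_{n>P}q^n = q^{P+1}/(1-q)$, and substituting $q = 1/c$ yields $q^{P+1}/(1-q) = (1/c)^P/(c-1)$, which is exactly the asserted bound.
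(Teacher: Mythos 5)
Your proof is correct, and it takes a more self-contained route than the paper's. The paper's proof is terse: it writes $\log r_{xy} = \log r_x + \tfrac12\log(1+u^2-2u\cos\alpha)$ with $u = r_y/r_x$, cites the classical real-variable Fourier-series derivation of the expansion, and then invokes analytic continuation (``the right hand side is a convergent analytic series, so the result also holds for complex points''), with the error bound attributed to standard geometric-series estimates. You instead derive the identity directly for complex arguments via the factorization $r_{xy}^2 = r_x^2(1-te^{i\theta})(1-te^{-i\theta})$ and the Mercator series, which is in fact the classical mechanism underlying the Fourier series the paper cites --- so the two arguments share their computational core --- but you make explicit the one point the paper glosses over entirely: the branch of the complex logarithm. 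Your observation that $|te^{\pm i\theta}|<1$ places each factor in the open right half-plane (so the principal logarithm applies and the arguments of the two factors sum to within $(-\pi,\pi)$), combined with the homotopy $s\mapsto st$ along which the $2\pi i\,\mathbb{Z}$-valued discrepancy is continuous and vanishes at $s=0$, is a genuine addition; it also quietly resolves the sign ambiguity between $r_x\sqrt{1-te^{i\theta}}\,\sqrt{1-te^{-i\theta}}$ and the principal root defining $r_{xy}$, which a bare appeal to ``permanence of identities'' leaves implicit. Your truncation estimate, including the identification $c = 1/\max\{|te^{i\theta}|,|te^{-i\theta}|\}$ via $|e^{i\theta}||e^{-i\theta}|=1$, reproduces the paper's bound $\frac{1}{c-1}\left(\frac{1}{c}\right)^{P}$ exactly. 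In short: the paper buys brevity by citing the real case and continuing analytically; your version buys rigor on the branch question at the cost of a few extra lines, and either would serve.
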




\begin{proof}
Equation \eqref{eqn:lap2d_multi} is well known for real points and it is derived by rewriting the potential as
\begin{equation*}
\log r_{xy} =\log r_x + \frac{1}{2}\log(1+u^2-2u\cos\alpha)
\end{equation*}
where $u=r_y/r_x, \alpha = \varphi_y-\varphi_x$, and computing its Fourier series on an appropriate ball. The assumption on the points gives that the right hand side is a convergent analytic series, so the result also holds for complex points. The error bound follows directly from standard estimates for truncated geometric series.

\end{proof}

  We now define the multipole and local expansions of the potential $u$ centered at the origin and their truncation errors,  which follow directly from Lemma~\ref{lma:addtion_2d_laplace}. 
\begin{theorem}[Multipole expansion]
\label{thm:form_multipole_lap2d}
Suppose that Assumption \ref{assump:2d} is satisfied and that the source locations $\{\by_j\}_{j=1}^N\subset\Vpsi$ satisfy $\|\Re \by_j \| \leq r$ for all $1 \leq j \leq N$. Let the target point $\bx = (r_x, \varphi_x)\in \Vpsi$ satisfy $\|\Re \bx \| \geq R$ for some $R > r$. If the Lipschitz constant satisfies $L < (R - r)/(R + r)$, then the potential \eqref{eqn:lap_2d_pot} admits the multipole expansion
\begin{equation}
\label{eqn:mul_expansion_lap2d}
    u(\bx) = M_0 \log r_x + \sum_{n=1}^\infty \frac{M_n^+}{r_x^n} e^{i n \varphi_x} 
    + \sum_{n=1}^\infty \frac{M_n^-}{r_x^n} e^{-i n \varphi_x},
\end{equation}
with multipole coefficients
\begin{equation}
\label{eqn:lap2d_mp_coef}
    M_0 = \sum_{j=1}^N \sigma_j, \quad 
    M_n^\pm = -\frac{1}{2n} \sum_{j=1}^N r_{y_j}^n e^{\mp i n \varphi_j} \sigma_j, \quad n \geq 1.
\end{equation}
Let $c=\frac{(1-L)R}{(1+L)r}>1$ and $A=\sum_{j=1}^N |\sigma_j|$. For any $P \geq 1$, the truncation error satisfies
\begin{equation}
\label{eqn:l2d_mp_err}
    \left| u(\bx) - M_0 \log r_x 
    - \sum_{n=1}^P \frac{M_n^+}{r_x^n} e^{i n \varphi_x} 
    - \sum_{n=1}^P \frac{M_n^-}{r_x^n} e^{-i n \varphi_x} \right|
    \leq \frac{A}{c-1} 
    \left( \frac{1}{c} \right)^{P}.
\end{equation}
\end{theorem}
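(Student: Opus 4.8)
The plan is to reduce the whole statement to the single-source addition formula of Lemma~\ref{lma:addtion_2d_laplace} applied once per source $\by_j$, with its convergence hypothesis supplied uniformly by Lemma~\ref{lma:bound_for_2d}, and then to bound the truncation error term-by-term before summing. This sidesteps any separate argument for convergence of the fully rearranged double series.

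First I would check the hypothesis of the addition formula for all sources simultaneously. Fix $j$. Since $\|\Re\by_j\|\le r$, $\|\Re\bx\|\ge R$, and $L<(R-r)/(R+r)$, Lemma~\ref{lma:bound_for_2d} (with $\alpha_j=\varphi_{y_j}-\varphi_x$) gives $|r_{y_j}e^{\pm i\alpha_j}|/|r_x|<1$, so Lemma~\ref{lma:addtion_2d_laplace} applies. The crucial point is quantitative: the proof of Lemma~\ref{lma:bound_for_2d} in fact establishes
\[
\max_{\pm}\frac{|r_{y_j}e^{\pm i\alpha_j}|}{|r_x|}\le\frac{1+L}{1-L}\cdot\frac{r}{R}=\frac1c,\qquad c=\frac{(1-L)R}{(1+L)r}>1,
\]
independently of $j$. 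Hence the per-source constant $c_j$ occurring in Lemma~\ref{lma:addtion_2d_laplace} satisfies $c_j\ge c$ for every $j$, and this single $c$ is exactly the one in the theorem statement.

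Next I would apply Lemma~\ref{lma:addtion_2d_laplace} to each $\log r_{xy_j}$, multiply by $\sigma_j$, and collect terms by the power $r_x^{-n}$ and the angular factor $e^{\pm in\varphi_x}$. Reading off the coefficients gives $M_0=\sum_j\sigma_j$ and $M_n^{\pm}=-\tfrac{1}{2n}\sum_j r_{y_j}^n e^{\mp in\varphi_{y_j}}\sigma_j$, which are precisely~\eqref{eqn:lap2d_mp_coef}, and thus the expansion~\eqref{eqn:mul_expansion_lap2d}. For the error I would not expand first and then truncate, but instead invoke the truncation bound of Lemma~\ref{lma:addtion_2d_laplace} for each source,
\[
\left|\log r_{xy_j}-\log r_x+\tfrac12\sum_{n=1}^P\tfrac1n\tfrac{r_{y_j}^n e^{-in\varphi_{y_j}}}{r_x^n}e^{in\varphi_x}+\tfrac12\sum_{n=1}^P\tfrac1n\tfrac{r_{y_j}^n e^{in\varphi_{y_j}}}{r_x^n}e^{-in\varphi_x}\right|\le\frac{1}{c_j-1}\left(\frac{1}{c_j}\right)^P.
\]

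Finally I would assemble the global estimate. Because $t\mapsto (t-1)^{-1}t^{-P}$ is decreasing on $(1,\infty)$ and $c_j\ge c$, each per-source error is at most $(c-1)^{-1}c^{-P}$; multiplying by $|\sigma_j|$, summing over $j$, and applying the triangle inequality to $u(\bx)-M_0\log r_x-\sum_{n=1}^P r_x^{-n}(M_n^+e^{in\varphi_x}+M_n^-e^{-in\varphi_x})$ yields the bound $\tfrac{A}{c-1}(1/c)^P$ of~\eqref{eqn:l2d_mp_err} with $A=\sum_j|\sigma_j|$. The main obstacle is the bookkeeping of the middle step: extracting from Lemma~\ref{lma:bound_for_2d} a single constant $c$ that dominates all $N$ source--target ratios at once, then exploiting monotonicity of the geometric-tail bound in $c$ to collapse the $N$ per-source estimates into one clean expression, and confirming that the rearrangement into the coefficients $M_n^{\pm}$ is legitimate (guaranteed by the absolute convergence that the uniform $c>1$ provides) before passing $P\to\infty$.
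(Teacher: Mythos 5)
Your proposal is correct and follows exactly the route the paper intends: the paper states that the theorem ``follows directly'' from Lemma~\ref{lma:addtion_2d_laplace}, with the convergence hypothesis and the uniform constant $c=\frac{(1-L)R}{(1+L)r}$ supplied by the quantitative estimate inside the proof of Lemma~\ref{lma:bound_for_2d}, which is precisely the per-source application, monotonicity of $t\mapsto (t-1)^{-1}t^{-P}$, and triangle-inequality summation you carry out. Your explicit verification that the rearrangement into the coefficients $M_n^{\pm}$ is justified by absolute convergence is a detail the paper leaves implicit, but it is the same argument.
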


\begin{theorem}[Local expansion]
\label{thm:form_local_lap2d}
Suppose the source locations $\{\by_j\}_{j=1}^N\subset \Vpsi$ satisfy $\|\Re \by_j\| > R$ for all $j$. Let the target point $\bx\in \Vpsi$ satisfy $\|\Re \bx\| < r$ for some $r < R$. If the Lipschitz constant satisfies $L < (R - r)/(R + r)$, then the potential admits the local expansion
\begin{equation}
\label{eqn:loc_expansion_lap2d}
u(\bx) = L_0 + \sum_{n=1}^\infty L_n^+ r_x^n e^{i n \varphi_x} 
+ \sum_{n=1}^\infty L_n^- r_x^n e^{-i n \varphi_x},
\end{equation}
with coefficients
\begin{equation}
\label{eqn:lap2d_loc_coef}
    L_0 = \sum_{j=1}^N \log r_{y_j} \sigma_j, \quad 
    L_n^\pm = -\frac{1}{2n} \sum_{j=1}^N r_{y_j}^{-n} e^{\mp i n \varphi_j} \sigma_j, \quad n \geq 1.
\end{equation}
Let $c=\frac{(1-L)R}{(1+L)r}>1$ and $A=\sum_{j=1}^N |\sigma_j|$. For any $P \geq 1$, the truncation error satisfies
\begin{equation*}
    \left| u(\bx) - L_0 
    - \sum_{n=1}^P L_n^+ r_x^n e^{i n \varphi_x} 
    - \sum_{n=1}^P L_n^- r_x^n e^{-i n \varphi_x} \right|
    \leq \frac{A}{c-1} 
    \left( \frac{1}{c} \right)^{P}.
\end{equation*}
\end{theorem}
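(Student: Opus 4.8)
The plan is to mirror the proof of Theorem~\ref{thm:form_multipole_lap2d}, exploiting the symmetry $r_{xy}=r_{yx}$ of the complexified distance to reduce everything to the addition formula of Lemma~\ref{lma:addtion_2d_laplace} with the roles of source and target interchanged. Since $\log r_{xy_j}=\log r_{y_jx}$, I would apply Lemma~\ref{lma:addtion_2d_laplace} with $\bx$ and $\by_j$ swapped, which expands $\log r_{xy_j}$ in powers of $r_x/r_{y_j}$ (small over large) rather than $r_{y_j}/r_x$:
\[
\log r_{xy_j} = \log r_{y_j} - \frac{1}{2}\sum_{n=1}^\infty \frac{1}{n}\,\frac{r_x^n e^{in\varphi_x}}{r_{y_j}^n}\, e^{-in\varphi_j} - \frac{1}{2}\sum_{n=1}^\infty \frac{1}{n}\,\frac{r_x^n e^{-in\varphi_x}}{r_{y_j}^n}\, e^{in\varphi_j}.
\]
Reading off the coefficients of $r_x^n e^{\pm in\varphi_x}$ and summing against the charges $\sigma_j$ then produces exactly the stated $L_0=\sum_j \log r_{y_j}\,\sigma_j$ and $L_n^\pm=-\tfrac{1}{2n}\sum_j r_{y_j}^{-n}e^{\mp in\varphi_j}\sigma_j$, so the algebraic part of the expansion is immediate.

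The hypothesis needed for the swapped addition formula to converge is $|r_x e^{\pm i\alpha}|<|r_{y_j}|$ with $\alpha=\varphi_j-\varphi_x$, which is precisely the conclusion of Lemma~\ref{lma:bound_for_2d} after interchanging $\bx$ and $\by$. I would apply that lemma with the far point $\by_j$ (satisfying $\|\Re\by_j\|>R$) in the role of the original $\bx$ and the near point $\bx$ (satisfying $\|\Re\bx\|<r$) in the role of the original $\by$; the Lipschitz bound $L<(R-r)/(R+r)$ then guarantees $|r_x e^{\pm i\alpha}|/|r_{y_j}|<1$ for every $j$, so each summand is a convergent analytic series and the interchange of summation over $n$ and $j$ is justified.

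For the error estimate I would use the per-source truncation bound of Lemma~\ref{lma:addtion_2d_laplace}, which (with roles swapped) controls the tail beyond order $P$ for the $j$-th source by $\frac{1}{c_j-1}(1/c_j)^P$, where $c_j=\frac{|r_{y_j}|}{|r_x|}\min\{|e^{i\alpha}|,|e^{-i\alpha}|\}$. The key quantitative step is a uniform lower bound $c_j\ge c:=\frac{(1-L)R}{(1+L)r}$. This follows by reusing the explicit chain of inequalities inside the proof of Lemma~\ref{lma:bound_for_2d}, which gives $\frac{|r_x e^{\pm i\alpha}|}{|r_{y_j}|}\le \frac{1+L}{1-L}\cdot\frac{r}{R}=\frac1c$, whence $1/c_j=\frac{|r_x|}{|r_{y_j}|}\max\{|e^{i\alpha}|,|e^{-i\alpha}|\}\le 1/c$. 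Since $x\mapsto \frac{1}{x-1}(1/x)^P$ is decreasing on $(1,\infty)$, each per-source tail is at most $\frac{1}{c-1}(1/c)^P$; applying the triangle inequality to $\sum_j\sigma_j(\mathrm{tail}_j)$ and factoring out $A=\sum_j|\sigma_j|$ yields the claimed bound.

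The main obstacle, as in the multipole case, is not the reindexing but verifying that a \emph{single} constant $c$ governs every term: $c_j$ depends on each source through both $|r_{y_j}|$ and the angular factor $\max\{|e^{\pm i\alpha}|\}$, so one must confirm that the worst case over the $\pm$ signs and over all $j$ is still controlled by the geometric separation and the Lipschitz constant. Once $c_j\ge c$ is secured this is routine. I would also note in passing that $r_{y_j}\ne 0$, so that $L_0$ is well defined; this follows from the same estimate used in Lemma~\ref{lma:bound_for_2d}, namely $|r_{y_j}|\ge (1-L)\|\Re\by_j\|>0$.
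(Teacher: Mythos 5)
Your proposal is correct and takes essentially the same route the paper intends: the paper derives both the multipole and local expansions directly from Lemma~\ref{lma:addtion_2d_laplace}, and your argument simply makes explicit the role-swap $\bx\leftrightarrow\by_j$ (using $r_{xy}=r_{yx}$), the verification of the convergence condition via Lemma~\ref{lma:bound_for_2d} with far and near points interchanged, and the uniform bound $c_j\ge c=\frac{(1-L)R}{(1+L)r}$ followed by the triangle inequality. The reading-off of the coefficients $L_0$ and $L_n^\pm$, the monotonicity argument for the per-source tail, and the observation that $|r_{y_j}|\ge(1-L)\|\Re\by_j\|>0$ (so $\log r_{y_j}$ is well defined) are all sound and consistent with the paper's stated bound.
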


\subsubsection{Translation operators}
The translation operators, multipole-to-multipole (M2M), multipole-to-local (M2L), and local-to-local (L2L), can be derived using the following addition formulas. The proofs are based on binomial expansions and are omitted.
\begin{lemma}
\label{lma:addition_formula_lap2d_trans}
For $\bx=(r_x,\varphi_y), \by =(r_y,\varphi_y) \in \Vpsi$, let $\bx-\by=(r_{xy}, \varphi_{xy})$. If $|r_y e^{\pm i\varphi_{xy}}|<|r_x|$, then for $n\ge 1$, 
\begin{equation}
\label{eqn:firstaddition_lap2d}
\frac{e^{\pm in\varphi_{xy}}}{r_{xy}^n} =\sum_{k=0}^\infty \binom{n+k-1}{k} r_y^ke^{\mp ik\varphi_y}  \frac{ e^{\pm i(n+k)\varphi_x}}{r_x^{n+k}}. 
\end{equation}
If $|r_x e^{\pm i\varphi_{xy}}|<|r_y|$, then for $n\ge 1$, 
\begin{equation}
\label{eqn:secondaddition_lap2d}
   \frac{e^{\pm in\varphi_{xy}}}{r_{xy}^n} = (-1)^n \sum_{k=0}^\infty \binom{n+k-1}{k} \frac{ e^{\pm i(n+k)\varphi_y}}{r_y^{n+k}} \rho_x^k e^{\mp ik\varphi_x}  .  
\end{equation}
The  following holds for any $\bx,\by$ with $n\ge1$, 
\begin{equation}
\label{eqn:thirdaddition_lap2d}
  e^{\pm in\varphi_{xy}}r_{xy}^n = \sum_{k=0}^n \binom{n}{k} (-1)^k  r_y^k e^{\pm ik\varphi_y} r_x^{n-k} e^{\pm i(n-k)\varphi_x}.
\end{equation}   
\end{lemma}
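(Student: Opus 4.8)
The plan is to reduce all three identities to the ordinary and generalized binomial theorems by passing to a pair of coordinates that linearize the polar monomials. For a point $\bx=(r_x,\varphi_x)\in\Vpsi$ I would set
\[
\xi_x := r_x e^{i\varphi_x}=x_1+ix_2,\qquad \eta_x := r_x e^{-i\varphi_x}=x_1-ix_2,
\]
and likewise $\xi_y,\eta_y$. The crucial observation is that $\xi$ and $\eta$ are \emph{linear} in the Cartesian coordinates, hence additive under vector subtraction: since $\bx-\by$ has Cartesian coordinates $(x_1-y_1,x_2-y_2)$ and polar coordinates $(r_{xy},\varphi_{xy})$, the definition of complex polar coordinates gives $r_{xy}e^{i\varphi_{xy}}=\xi_x-\xi_y$ and $r_{xy}e^{-i\varphi_{xy}}=\eta_x-\eta_y$. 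Because only integer powers of these single-valued quantities will ever appear, every branch ambiguity in $r_{xy}$ and $\varphi_{xy}$ cancels and the manipulations become purely algebraic.

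With this substitution the left-hand sides collapse to
\[
\frac{e^{\pm in\varphi_{xy}}}{r_{xy}^n}=(\eta_x-\eta_y)^{-n}\ \text{ or }\ (\xi_x-\xi_y)^{-n},\qquad
e^{\pm in\varphi_{xy}}r_{xy}^n=(\xi_x-\xi_y)^{n}\ \text{ or }\ (\eta_x-\eta_y)^{n},
\]
according to the sign. For \eqref{eqn:firstaddition_lap2d} (upper sign) I would expand $(\eta_x-\eta_y)^{-n}=\eta_x^{-n}(1-\eta_y/\eta_x)^{-n}$ using the generalized binomial series $\sum_{k\ge0}\binom{n+k-1}{k}(\eta_y/\eta_x)^k$; rewriting $\eta_y^k=r_y^k e^{-ik\varphi_y}$ and $\eta_x^{-(n+k)}=r_x^{-(n+k)}e^{i(n+k)\varphi_x}$ reproduces the stated right-hand side. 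For \eqref{eqn:secondaddition_lap2d} I would expand the same quantity in the opposite regime, writing $(\eta_x-\eta_y)^{-n}=(-1)^n(\eta_y-\eta_x)^{-n}$ and developing in powers of $\eta_x/\eta_y$, which yields the prefactor $(-1)^n$ with the roles of $x$ and $y$ exchanged (here $\rho_x$ should read $r_x$). For \eqref{eqn:thirdaddition_lap2d} the finite binomial theorem $(\xi_x-\xi_y)^n=\sum_{k=0}^n\binom{n}{k}(-1)^k\xi_y^k\xi_x^{n-k}$ gives the result directly, with no convergence hypothesis needed. In each case the lower-sign version is identical after interchanging $\xi\leftrightarrow\eta$, i.e.\ $i\mapsto-i$.

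The only analytic point to verify is convergence of the two infinite series, and this is where I would be most careful. Each is an absolutely convergent geometric-type series exactly when the relevant ratio of moduli is strictly less than one: $|\eta_y|<|\eta_x|$ for \eqref{eqn:firstaddition_lap2d} and $|\eta_x|<|\eta_y|$ for \eqref{eqn:secondaddition_lap2d} (upper sign). These are precisely the modulus forms of the hypotheses, since $|\eta_y|<|\eta_x|$ is equivalent to $|r_y e^{i(\varphi_x-\varphi_y)}|<|r_x|$, the separation condition already appearing in Lemma~\ref{lma:addtion_2d_laplace}; under Assumption~\ref{assump:2d} with the real parts separated they are guaranteed by Lemma~\ref{lma:bound_for_2d}. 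Absolute convergence also lets me identify the sum of the series with $(\eta_x-\eta_y)^{-n}$ by analytic continuation from the real case. The main difficulty is thus conceptual rather than analytic: recognizing that the additive coordinates $r e^{\pm i\varphi}=x_1\pm ix_2$ trivialize the difference $\bx-\by$, after which every identity is a one-line application of the binomial theorem.
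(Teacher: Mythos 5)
Your proof is correct and is exactly the route the paper intends: the paper omits the proof of this lemma, stating only that it is ``based on binomial expansions,'' and your reduction via the additive coordinates $\xi=x_1+ix_2=re^{i\varphi}$, $\eta=x_1-ix_2=re^{-i\varphi}$ (so that $r_{xy}e^{\pm i\varphi_{xy}}=\xi_x-\xi_y$ resp.\ $\eta_x-\eta_y$) followed by the finite and generalized binomial theorems is the standard way to fill in those omitted details. You also correctly repaired the statement's typos, reading $\rho_x$ as $r_x$ in \eqref{eqn:secondaddition_lap2d} and interpreting the convergence hypotheses with $\varphi_x-\varphi_y$ in place of $\varphi_{xy}$, consistent with Lemma~\ref{lma:addtion_2d_laplace}.
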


In the following theorem, we bound the truncation error for the M2M, M2L and L2L translation operators.
\begin{theorem}[Translation operators]\label{thm:2d_trans}
Suppose $\bx_0,\bx \in \Vpsi$ and $\bx-\bx_0=(r_x', \varphi_x')$ and~$r>0$. Further, suppose~$\{M_0,M_1^+,\ldots, M_1^-,\ldots\}$ are multipole coefficients satisfying $|M_n^\pm|<A \left(\frac{1-L}{1+L}r\right)^n$ and $\{L_0,L_1^+,\ldots,L_P^+, L_1^-,\ldots,L_P^-\}$ are local coefficients. 

Finally, let
\begin{equation*}
    u_M(\bx) = M_0 \log r_{x}'+\sum_{n = 1}^\infty  \frac{M_{n}^+}{(r_{x}')^n}  e^{in\varphi'_{x}}+\sum_{n = 1}^\infty  \frac{M_{n}^-}{(r_{x}')^n}  e^{-in\varphi_{x}'}
\end{equation*}
for $\|\Re \bx-\Re \bx_0\|>r$ be a multipole expansion and
\begin{equation*}
    u_L(\bx) = L_0  +\sum_{n = 1}^P  L_{n}^+(r_{x}')^n e^{in\varphi_{x}'}+\sum_{n = 1}^P L_{n}^- (r_{x}')^n e^{-in\varphi_{x}'}
\end{equation*}
be a local expansion. The expansion centers can be translated from $\bx_0$ to the origin as follows.
\begin{itemize}
    \item M2M operator: If $R>\|\Re \bx_0\|+r$ and $L<(R-\|\Re \bx_0\|-r)/(R+\|\Re \bx_0\|+r)$, then for any target point with $\|\Re \bx\|>R$, we have
\begin{equation*}
    u_M(\bx) = \tilde M_0 \log r_{x}+\sum_{k = 1}^\infty  \frac{\tilde M_{k}^+}{(r_{x})^k}  e^{ik\varphi_{x}}+\sum_{k = 1}^\infty  \frac{\tilde M_{k}^-}{(r_{x})^k}  e^{-ik\varphi_{x}},
\end{equation*}
where 
\begin{equation}
\label{eqn:M2M_lap2d}
    \tilde M_0 = M_0,\quad 
    \tilde M_k^\pm = -\frac{1}{2k}r_0^k e^{\mp i k \varphi_0}M_0+\sum_{n=1}^k  \binom{k-1}{k-n} r_0^{k-n} e^{\mp i(k-n)\varphi_0} M_n^{\pm}.
    \end{equation}
For any $P\ge 1$, $u_M$ satisfies the truncation estimate
\begin{equation}\label{eqn:M2M_lap2d_err}
    \left\vert u_M(\bx)- \tilde M_0 \log r_{x}-\sum_{k = 1}^P  \frac{\tilde M_{k}^+}{(r_{x})^k}  e^{ik\varphi_{x}}-\sum_{k = 1}^P  \frac{\tilde M_{k}^-}{(r_{x})^k}  e^{-ik\varphi_{x}} \right\vert
    \le \frac{A}{c-1} \left( \frac{1}{c} \right)^{P},
\end{equation}
where $c=\frac{1-L}{1+L}\frac{R}{r+\|\Re \bx_0\|}>1$.

\item M2L operator: If $\|\Re \bx_0\|>(1+a)r$ and $L<(a-1)/(a+1)$ for some $a>1$ , then for $\|\Re \bx\|<r$, we have
\begin{equation*}
    u_M(\bx) = \tilde L_0  +\sum_{k = 1}^\infty \tilde L_{k}^+(r_{x})^ke^{ik\varphi_{x}}+\sum_{k = 1}^\infty \tilde L_{k}^- (r_{x})^k e^{-ik\varphi_{x}},
\end{equation*}
where 
\begin{multline}
\label{eqn:M2L_lap2d}
    \tilde L_0 = M_0\log r_0+\sum_{n=1}^\infty (-1)^n \frac{e^{in\varphi_0}}{r_0^n}M_n^+ +\sum_{n=1}^{\infty} (-1)^n \frac{e^{-in\varphi_0}}{r_0^n}M_n^-\quad\text{and} \\  
    \tilde L_k^\pm =-\frac{1}{2} \frac{e^{\mp ik \varphi_0}}{r_0^k} M_0+ \sum_{n=1}^\infty (-1)^n  \binom{n+k-1}{k} \frac{e^{\mp i(n+k)\varphi_0}}{r_0^{n+k}} M_n^{\mp} .
    \end{multline}
For any $P\ge 1$, we have
\begin{equation}\label{eqn:M2L_lap2d_err}
    \left\vert u_M(\bx)- \tilde L_0  -\sum_{k = 1}^P  \tilde L_{k}^+(r_{x})^ke^{in\varphi_{x}}-\sum_{k = 1}^P  \tilde L_{k}^- (r_{x})^k e^{-ik\varphi_{x}} \right\vert
    \le \frac{A}{c-1} \left( \frac{1}{c}  \right)^{P},
\end{equation}
where $c=\frac{(1-L)a}{1+L}>1$.
\item L2L operator: Finally, we have
\begin{equation*}
    u_L(\bx) = \bar L_0  +\sum_{k = 1}^P  \bar L_{k}^+(r_{x})^ke^{ik\varphi_{x}}+\sum_{k = 1}^P \bar L_{k}^- (r_{x})^k e^{-ik\varphi_{x}},
\end{equation*}
where 
    \begin{multline}
\label{eqn:L2L_lap2d}
    \bar L_0 = L_0+\sum_{n=1}^P (-1)^nr_0^n e^{in\varphi_0}L_n^+ + \sum_{n=1}^P (-1)^n r_0^n e^{-in\varphi_0}L_n^-\quad\text{and}\\
    \bar L_k^\pm =\sum_{n= k}^P (-1)^{n-k} \binom{n}{k} r_0^{n-k} e^{\pm i(n-k)\varphi_0} L_n^\pm. 
    \end{multline}
\end{itemize}
\end{theorem}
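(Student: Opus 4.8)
The cleanest route is to reduce the whole 2-D Laplace translation theory to the one-variable Laurent/Taylor theory of $z^{-n}$, $z^{n}$ and $\log z$. Writing $X_\pm = x_1 \pm ix_2$ and $X_\pm^{(0)} = x_{0,1} \pm ix_{0,2}$, the complex-polar identities of Section~\ref{sec:notations} give $e^{\pm in\varphi_x}/r_x^n = X_\mp^{-n}$, $\ r_x^n e^{\pm in\varphi_x} = X_\pm^n$, $\log r_x = \tfrac12(\log X_+ + \log X_-)$, and $r_0 e^{\pm i\varphi_0} = X_\pm^{(0)}$, with $\bx - \bx_0$ corresponding to $X_\pm - X_\pm^{(0)}$. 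Thus the multipole series splits as $\sum_n M_n^+ (X_- - X_-^{(0)})^{-n} + \sum_n M_n^-(X_+ - X_+^{(0)})^{-n}$ (plus the $M_0\log$ term), i.e.\ into two \emph{independent} single-variable exterior series, one holomorphic in $X_-$ and one in $X_+$; the local series splits the same way into interior (Taylor) series. Each addition formula of Lemma~\ref{lma:addition_formula_lap2d_trans} is then precisely the binomial expansion of $(X_\mp - X_\mp^{(0)})^{-n}$ about $X_\mp = \infty$ (for M2M), about $X_\mp = 0$ (for M2L), or the finite binomial expansion of $(X_\mp - X_\mp^{(0)})^{n}$ (for L2L), and Lemma~\ref{lma:addtion_2d_laplace} supplies the corresponding $\log$ expansions. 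The key link to the real geometry is the pinching $|X_\pm| = |x_1 \pm ix_2| \in [(1-L)\|\Re\bx\|,\,(1+L)\|\Re\bx\|]$, which is exactly the computation in the proof of Lemma~\ref{lma:bound_for_2d}.

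With this dictionary the coefficient formulas follow by reading off coefficients. For M2M I substitute $(X_- - X_-^{(0)})^{-n} = \sum_{k\ge 0}\binom{n+k-1}{k}(X_-^{(0)})^k X_-^{-(n+k)}$, reindex, and collect the coefficient of $X_-^{-k} = e^{ik\varphi_x}/r_x^k$ (together with the exterior $\log$ expansion of the $M_0$ term), which reproduces \eqref{eqn:M2M_lap2d}; the $X_+$ piece gives the $\tilde M_k^-$. For M2L I instead Taylor-expand $(X_- - X_-^{(0)})^{-n}$ about the origin, producing powers $X_-^{k} = r_x^k e^{-ik\varphi_x}$, so the $X_-$ (multipole ``$+$'') piece feeds the local ``$-$'' coefficients and vice versa, matching the $M_n^{\mp}$ pattern in \eqref{eqn:M2L_lap2d}. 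For L2L the expansion $(X_\pm - X_\pm^{(0)})^n = \sum_{k=0}^n\binom{n}{k}(-X_\pm^{(0)})^{n-k}X_\pm^k$ is finite, so collecting the coefficient of $X_\pm^k$ over $k \le n \le P$ gives \eqref{eqn:L2L_lap2d} \emph{exactly}, with no truncation error.

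For the error estimates I use the standard one-variable remainder bounds. The coefficient hypothesis $|M_n^\pm| < A\,(\tfrac{1-L}{1+L}r)^n$ means each exterior piece, as a function of $X_\mp$, is holomorphic outside the disk of radius $s := \tfrac{1-L}{1+L}r$ centered at $X_\mp^{(0)}$. For M2M the re-expansion about the origin therefore converges for $|X_\mp| > |X_\mp^{(0)}| + s$, and since $|X_\mp^{(0)}| + s \le (1+L)\|\Re\bx_0\| + \tfrac{1-L}{1+L}r \le (1+L)(\|\Re\bx_0\| + r)$ while $|X_\mp| \ge (1-L)R$ on the target set, every tail term is bounded by the geometric ratio $(|X_\mp^{(0)}|+s)/((1-L)R) < 1/c$ with $c = \tfrac{1-L}{1+L}\tfrac{R}{r+\|\Re\bx_0\|}$; summing the tail from $k = P+1$ gives \eqref{eqn:M2M_lap2d_err}. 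The M2L estimate is the mirror image: each exterior piece re-expands as a convergent Taylor series for $|X_\mp| < |X_\mp^{(0)}| - s$, and with $|X_\mp^{(0)}| \ge (1-L)(1+a)r$, $|X_\mp| \le (1+L)r$ the geometric ratio $(1+L)r/(|X_\mp^{(0)}| - s)$ is $\le 1/c$ for $c = \tfrac{(1-L)a}{1+L}$, yielding \eqref{eqn:M2L_lap2d_err}; the verification of this last inequality collapses to the elementary bound $0 \le L$.

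The main obstacle is not any single computation but the bookkeeping that makes the one-variable reduction airtight: establishing the two-sided pinching $|x_1 \pm ix_2| \in [(1-L)\|\Re\bx\|,(1+L)\|\Re\bx\|]$ (and its analogue at $\bx_0$) so that the \emph{real-part} separation hypotheses translate into the \emph{modular} separations $|X_\mp^{(0)}| + s < |X_\mp|$ and $|X_\mp| < |X_\mp^{(0)}| - s$ needed for convergence, and correctly identifying the effective source radius as $r + \|\Re\bx_0\|$ (M2M) and the gap parameter as $a$ (M2L). Once these are in place the interchange of summations is automatic from the absolute convergence of the underlying one-variable series, and the remaining work is the routine geometric-series remainder estimate; the $L \to 0$ limit collapses the pinching to an equality and recovers the classical FMM rates.
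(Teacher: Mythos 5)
Your proof is correct, and it takes a more self-contained route than the paper's. The paper's own proof is essentially a citation: the coefficient formulas come from the addition formulas of Lemma~\ref{lma:addition_formula_lap2d_trans} (whose binomial-expansion proofs are omitted) applied to the terms $e^{\pm in\varphi_x'}/(r_x')^n$, together with Theorems~\ref{thm:form_multipole_lap2d} and~\ref{thm:form_local_lap2d} applied to the $\log r_x'$ term; the error bounds are then obtained by re-interpreting the translated expansion as a multipole (resp.\ local) expansion with enlarged source radius $r+\|\Re \bx_0\|$ (resp.\ with $R=ar$) and invoking those theorems' truncation estimates. You instead factor the whole theory through the variables $X_\pm = x_1 \pm i x_2$, under which the complexified 2-D Laplace expansions decouple into two independent single-variable Laurent/Taylor series, the addition formulas become literal binomial expansions, and the error bounds become direct geometric-series tail estimates driven by the pinching $|x_1\pm ix_2| \in [(1-L)\|\Re\bx\|,\,(1+L)\|\Re\bx\|]$ --- the same inequality that powers Lemma~\ref{lma:bound_for_2d}, so the two arguments rest on identical analytic content. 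What your version buys: the convergence regions ($|X_\mp|>|X_\mp^{(0)}|+s$ for M2M, $|X_\mp|<|X_\mp^{(0)}|-s$ for M2L) and the constants $c$ are verified explicitly rather than inherited by citation; this matters because the paper's invocation of Theorems~\ref{thm:form_multipole_lap2d} and~\ref{thm:form_local_lap2d} is slightly loose --- those theorems are stated for point-source potentials, not for abstract coefficient sequences satisfying $|M_n^\pm|<A\left(\frac{1-L}{1+L}r\right)^n$, and your direct coefficient bounds close exactly that gap. Your derivation also exposes what appears to be a typo in \eqref{eqn:M2L_lap2d}: the $M_0$ contribution to $\tilde L_k^\pm$ should carry the factor $-\frac{1}{2k}$ rather than $-\frac{1}{2}$, as the interior expansion of $\log\bigl(X_\pm - X_\pm^{(0)}\bigr)$ (and the classical real-variable M2L formula) both show. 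Two small caveats, which you share with the paper's own treatment: summing the tails of both the $+$ and $-$ series costs a factor of at most $2$ relative to the stated constant $A/(c-1)$ unless one tracks the $1/(2k)$ prefactors, and the pinching step uses $\|\Im\bx\|\le L\|\Re\bx\|$, i.e.\ implicitly $\psi(0)=0$, exactly as in the proof of Lemma~\ref{lma:bound_for_2d}.
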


\begin{proof}
The formulas for the translated multipole coefficients in \eqref{eqn:M2M_lap2d} are obtained by applying Theorem~\ref{thm:form_multipole_lap2d} to the term $\log r_x'$ and the addition formula \eqref{eqn:firstaddition_lap2d} to the terms $\frac{e^{\pm in\varphi_x'}}{(r_x')^n}$.  The error bound in \eqref{eqn:M2M_lap2d_err} is a consequence of Theorem~\ref{thm:form_multipole_lap2d}, with $r$ replaced by $r + \|\Re \bx_0\|$.

The local expansion coefficients in \eqref{eqn:M2L_lap2d} are obtained from applying the Theorem~\ref{thm:form_local_lap2d} to the term $\log r_x'$ and  the addition formula \eqref{eqn:secondaddition_lap2d} to the terms $\frac{e^{\pm in \varphi_x'}}{(r_x')^n}$. The error bound in \eqref{eqn:M2L_lap2d_err} is obtained by applying Theorem~\ref{thm:form_local_lap2d} with $R = ar$. The local expansion coefficients in \eqref{eqn:L2L_lap2d} are obtained from the addition formula  \eqref{eqn:thirdaddition_lap2d}.
\end{proof}

\subsection{The  2-D complex-coordinate Helmholtz FMM}
\label{sec:fmm_helm2d}
In this section, we derive the analogous expansions and the translation operators along with related truncation error estimates for 2-D Helmholtz equation with wavenumber $\kappa \in \R$. The analytic continuation of the Green's function is given by 
\begin{equation*}
G_\kappa(\bx,\by) = \frac{i}{4} H_0^{(1)}(\kappa \cdot r_{xy}),\quad \bx,\by \in \C^2.    
\end{equation*}
Let the source locations be given by  $\{\by_{j}=(r_{y_j},\varphi_{y_j})\}_{j=1}^N \subset \C^2$, with associated charge strengths $\{\sigma_j\}_{j=1}^N$. Consider the Helmholtz $N-$body sum with complex coordinates given by
\begin{equation}
\label{eqn:helm_2d_pot}
    u(\bx) = \sum_{j=1}^N H_0^{(1)}(\kappa \cdot r_{xy_j}) \sigma_j, \quad \bx\in \C^2, \quad \text{ where }  \bx-\by_j=(r_{xy_j},\varphi_{xy_j}). 
\end{equation}

\subsubsection{Multipole and local expansions}
The multipole and local expansions for the 2-D Helmholtz kernel are summarized in the theorem below. The proof follows from Theorem~\ref{thm:graf2d} and using the triangle inequality, 
\begin{theorem}[Multipole Expansion]
Suppose that Assumption \ref{assump:2d} is satisfied and the source locations $\{\by_j\}_{j=1}^N\subset\Vpsi$ satisfy $\Vert \Re \by_j\Vert< r$ for all $1\le j\le N$, with some $r>0$.  Let the target point  $\bx\in\Vpsi$ satisfy $\Vert \Re \bx \Vert > R$ for some $R>r$ and assume the Lipschitz constant satisfies $L<(R-r)/(R+r)$. Then the potential \eqref{eqn:helm_2d_pot} has the multipole expansion
\begin{equation}
\label{eqn:mp_expansion_helm2d}
    u(\bx) = \sum_{n=-\infty}^\infty M_{n} H^{(1)}_n(\kappa r_x) e^{in\varphi_x},
\end{equation}
with multipole expansion coefficients
\begin{equation*}
    M_n = \sum_{j=1}^N  J_n^{(1)}(\kappa r_{y_j}) e^{-in\varphi_{y_j}}\sigma_j. 
\end{equation*}
Let $c=\frac{(1-L)R}{(1+L)r}>1$ and $A=\sum_{j=1}^N |\sigma_j|$. For sufficiently large P, the truncation error satisfies
\begin{equation}
\label{eqn:h2d_err_mp}
    \left\vert u(\bx) -  \sum_{n=-P}^P M_{n} H^{(1)}_n(\kappa r_x) e^{in\varphi_x} \right\vert \lesssim \frac{2}{P\pi(c-1)} \left(\frac{1}{c}\right)^P. 
\end{equation}
\end{theorem}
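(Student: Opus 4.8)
The plan is to derive the expansion source-by-source: each single-source term $H_0^{(1)}(\kappa r_{xy_j})\sigma_j$ is expanded by one application of Graf's addition formula (Theorem~\ref{thm:graf2d} with $m=0$), and the resulting series are then summed over $j$ with a uniform control on the geometric rate of decay.

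First I would fix a source $\by_j=(r_{y_j},\varphi_{y_j})$ and the target $\bx=(r_x,\varphi_x)$, and apply Theorem~\ref{thm:graf2d} to $H_0^{(1)}(\kappa r_{xy_j})$ with $z=\kappa r_{xy_j}$, $u=\kappa r_x$, $w=\kappa r_{y_j}$, and $\alpha=\varphi_x-\varphi_{y_j}$. Since $\kappa$ factors out of $r_{xy_j}=\sqrt{r_x^2+r_{y_j}^2-2r_xr_{y_j}\cos(\varphi_x-\varphi_{y_j})}$, the identity $z=\sqrt{u^2+w^2-2uw\cos\alpha}$ holds, and the convergence hypothesis $|we^{\pm i\alpha}|<|u|$ reduces to $|r_{y_j}e^{\pm i(\varphi_x-\varphi_{y_j})}|<|r_x|$. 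This is precisely the bound \eqref{eqn:cplx_separate} of Lemma~\ref{lma:bound_for_2d}, whose hypotheses $\|\Re\by_j\|<r$, $\|\Re\bx\|>R$, and $L<(R-r)/(R+r)$ are exactly those assumed here (the same estimate also forces $|x_1\pm ix_2|\ge(1-L)R>0$, so $r_x\neq0$ and every $H_n^{(1)}(\kappa r_x)$ is well defined). Writing $e^{in\alpha}=e^{in\varphi_x}e^{-in\varphi_{y_j}}$, Graf's formula gives $H_0^{(1)}(\kappa r_{xy_j})=\sum_{n=-\infty}^\infty H_n^{(1)}(\kappa r_x)e^{in\varphi_x}\,J_n(\kappa r_{y_j})e^{-in\varphi_{y_j}}$.

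Next I would multiply by $\sigma_j$, sum over the finitely many $j$, and interchange the two summations; absolute convergence of each single-source series (a geometric series with ratio bounded by $1/c<1$) justifies the interchange. Collecting the coefficient of each $H_n^{(1)}(\kappa r_x)e^{in\varphi_x}$ produces exactly $M_n=\sum_{j=1}^N J_n(\kappa r_{y_j})e^{-in\varphi_{y_j}}\sigma_j$, which is \eqref{eqn:mp_expansion_helm2d}.

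The truncation bound follows from the per-source estimate \eqref{eqn:hankel_error}, which for source $j$ is $\lesssim \frac{2}{P\pi(c_j-1)}(1/c_j)^P$ with $c_j=\max\{|r_{y_j}e^{i\alpha}|/|r_x|,\,|r_{y_j}e^{-i\alpha}|/|r_x|\}^{-1}$. The hard part --- really the only nontrivial step --- is to replace the $N$ distinct rates $c_j$ by one uniform $c$. Reading off the proof of Lemma~\ref{lma:bound_for_2d}, each ratio satisfies $|r_{y_j}e^{\pm i\alpha}|/|r_x|\le \frac{1+L}{1-L}\cdot\frac{r}{R}=1/c$, so $c_j\ge c>1$ for all $j$. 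Because $t\mapsto \frac{1}{t-1}(1/t)^P$ is decreasing on $(1,\infty)$, each per-source error is at most $\frac{2}{P\pi(c-1)}(1/c)^P$, and summing against the weights $|\sigma_j|$ contributes the factor $A=\sum_{j=1}^N|\sigma_j|$, which is absorbed into the constant implicit in the stated $\lesssim$. The qualifier ``for sufficiently large $P$'' is inherited directly from \eqref{eqn:hankel_error}, whose derivation relies on the large-order asymptotics \eqref{eqn:bessel_asymp}.
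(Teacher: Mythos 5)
Your proposal is correct and follows the same route the paper takes: the paper proves this theorem by exactly the combination of Graf's addition formula (Theorem~\ref{thm:graf2d}, applied per source with the convergence hypothesis $|r_{y_j}e^{\pm i\alpha}|<|r_x|$ supplied by Lemma~\ref{lma:bound_for_2d}) and the triangle inequality over sources. Your write-up merely makes explicit the steps the paper leaves implicit --- the uniform replacement of the per-source rates $c_j$ by $c$ via monotonicity of $t\mapsto \frac{1}{t-1}(1/t)^P$, and the absorption of $A$ into the $\lesssim$ --- all of which are sound.
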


\begin{theorem}[Local Expansion] 
Suppose that Assumption \ref{assump:2d} is satisfied and that the source locations $\{\by_j\}_{j=1}^N\subset\Vpsi$ satisfy $\Vert \Re \by_j\Vert> R$ for all $1\le j\le N$, with some $r>0$. Let the target point $\bx\in\Vpsi$ satisfy $\Vert \Re \bx \Vert< r$ for some $r<R$ and assume the Lipschitz constant satisfies $L<(R-r)/(R+r)$. Then the potential \eqref{eqn:helm_2d_pot} has the local expansion
\begin{equation*}
    u(\bx) = \sum_{n=-\infty}^\infty L_{n} J_n(\kappa r_x) e^{in\varphi_x}, 
\end{equation*}
with local expansion coefficients
\begin{equation*}
    L_n = \sum_{j=1}^N  H_n^{(1)}(\kappa r_{y_j}) e^{-in\varphi_{y_j}}\sigma_j. 
\end{equation*}
Let $c=\frac{(1-L)R}{(1+L)r}>1$ and $A=\sum_{j=1}^N |\sigma_j|$. For sufficiently large $P$, the truncation error satisfies
\begin{equation*}
    \left\vert u(\bx) -  \sum_{n=-P}^P L_{n} J_n(\kappa r_x) e^{in\varphi_x} \right\vert \lesssim \frac{2}{P\pi(c-1)} \left(\frac{1}{c}\right)^P. 
\end{equation*}
\end{theorem}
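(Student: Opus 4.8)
The plan is to mirror the structure of the Multipole Expansion proof, swapping the roles of the radial Bessel and Hankel factors, which is dictated by which radius is larger. First I would apply Graf's addition formula, Theorem~\ref{thm:graf2d}, to each term $H_0^{(1)}(\kappa r_{xy_j})$ in the sum \eqref{eqn:helm_2d_pot}. Setting $z = \kappa r_{xy_j}$, and identifying $r_{xy_j} = \sqrt{r_x^2 + r_{y_j}^2 - 2 r_x r_{y_j}\cos(\varphi_x - \varphi_{y_j})}$, I now take $u = \kappa r_{y_j}$, $w = \kappa r_x$, and $\alpha = \varphi_x - \varphi_{y_j}$ in \eqref{eqn:addition_for_H} — this is the reverse of the multipole assignment, because here the target radius $r_x$ plays the role of the small argument carried by the regular Bessel function $J_n$, while the source radius $r_{y_j}$ carries the singular Hankel function. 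This produces the separated form $H_0^{(1)}(\kappa r_{xy_j}) = \sum_{n=-\infty}^\infty H_n^{(1)}(\kappa r_{y_j}) J_n(\kappa r_x) e^{in(\varphi_x - \varphi_{y_j})}$. Regrouping $e^{in(\varphi_x - \varphi_{y_j})} = e^{-in\varphi_{y_j}} e^{in\varphi_x}$ and summing over $j$ against $\sigma_j$ then yields exactly the claimed expansion with coefficients $L_n = \sum_j H_n^{(1)}(\kappa r_{y_j}) e^{-in\varphi_{y_j}} \sigma_j$.

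The crucial point is verifying that the convergence hypothesis of \eqref{eqn:addition_for_H}, namely $|w e^{\pm i\alpha}| < |u|$, is satisfied for every source term. With the present assignment this reads $|r_x e^{\pm i(\varphi_x - \varphi_{y_j})}| < |r_{y_j}|$, equivalently $|r_x e^{\pm i\alpha}/r_{y_j}| < 1$. I would obtain this directly from Lemma~\ref{lma:bound_for_2d}: the lemma is stated for a target with large real part and source with small real part, but by symmetry of the quantity $|r_y e^{\pm i\alpha}/r_x|$ under interchange of the two points, applying it with the roles of $\bx$ and $\by_j$ swapped (so that $\Re\bx$ is the inner point with $\|\Re\bx\| < r$ and $\Re\by_j$ is the outer point with $\|\Re\by_j\| > R$) gives precisely $|r_x e^{\pm i\alpha}/r_{y_j}| < 1$ under the same Lipschitz condition $L < (R-r)/(R+r)$. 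This is the step I expect to require the most care, since one must confirm that the separation parameter $c$ coming out of the swapped lemma is exactly $c = (1-L)R/\big((1+L)r\big)$, matching the statement.

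For the truncation error I would invoke the quantitative bound \eqref{eqn:hankel_error} from Theorem~\ref{thm:graf2d} termwise. For each $j$, the tail of the Graf expansion is bounded by $\frac{2}{P\pi(c_j-1)}(1/c_j)^P$ where $c_j = |r_{y_j}|/\big(|r_x|\max\{|e^{i\alpha}|,|e^{-i\alpha}|\}\big)$; by the uniform estimate from the swapped Lemma~\ref{lma:bound_for_2d} each $c_j$ is at least $c = (1-L)R/\big((1+L)r\big)$, so each tail is dominated by $\frac{2}{P\pi(c-1)}(1/c)^P$. Multiplying by $|\sigma_j|$, summing over $j$, and using $A = \sum_j |\sigma_j|$ together with the triangle inequality gives the stated bound $\frac{2}{P\pi(c-1)}(1/c)^P$ up to the absorbed constant, which is why the statement is written with $\lesssim$. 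The ``sufficiently large $P$'' caveat enters because the asymptotic estimates \eqref{eqn:bessel_asymp} underlying \eqref{eqn:hankel_error} are only sharp for large order, so the clean geometric tail bound holds once $P$ exceeds a threshold depending on $u$ and $w$.
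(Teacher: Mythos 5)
Your proposal is correct and matches the paper's (omitted) argument exactly: the paper states that the proof follows from Graf's addition formula (Theorem~\ref{thm:graf2d}) and the triangle inequality, which is precisely your termwise application of \eqref{eqn:addition_for_H} with $u=\kappa r_{y_j}$, $w=\kappa r_x$, the convergence condition supplied by Lemma~\ref{lma:bound_for_2d} with the roles of source and target interchanged, and the tail bound \eqref{eqn:hankel_error} summed against $|\sigma_j|$. Your identification of $c=(1-L)R/\big((1+L)r\big)$ via the quantitative estimate inside the lemma's proof, and your observations about the absorbed factor $A$ and the large-$P$ caveat, are all consistent with the paper.
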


\subsubsection{Translation operators}
The translation operators M2M, M2L, and L2L,  follow directly  from Graf’s addition formula in Theorem~\ref{thm:graf2d} and are summarized below.
\begin{theorem}[Translation operators]
Suppose that Assumption \ref{assump:2d} is satisfied and $\bx_0,\bx \in \Vpsi$ and $\bx-\bx_0=(r_x', \varphi_x')$ and~$r>0$. Further, suppose~$\{\ldots,M_{-1},M_0,M_1,\ldots\}$ are multipole coefficients satisfying $|M_n|<A \left(\frac{1-L}{1+L}r\right)^{|n|}$ and $\{L_{-P},\ldots,L_P\}$ are local coefficients. 

Finally, let
\begin{equation*}
    u_M(\bx) =  \sum_{n=-\infty}^\infty M_n H^{(1)}_n(\kappa \cdot r_x') e^{in \varphi_x'} 
\end{equation*}
for $\|\Re \bx-\Re \bx_0\|>r$ be a multipole expansion and
\begin{equation*}
    u_L(\bx) = \sum_{n=-\infty}^{\infty}  L_n J_n(\kappa \cdot r_x') e^{in \varphi_x'}
\end{equation*}
be a local expansion. The expansion centers can be translated from $\bx_0$ to the origin as follows.
\begin{itemize}
    \item M2M operator: If $R>\|\Re \bx_0\|+r$ and $L<(R-\|\Re \bx_0\|-r)/(R+\|\Re \bx_0\|+r)$, then for any target point with $\|\Re \bx\|>R$, we have
\begin{equation}
    u_M(\bx) = \sum_{m=-\infty}^\infty \tilde M_m H^{(1)}_m(\kappa \cdot r_x) e^{im \varphi_x}, 
\end{equation}
where 
\begin{equation}
\label{eqn:M2M_helm2d}
    \tilde M_m = \sum_{n=-\infty}^\infty  J_{n-m}(\kappa r_0) e^{-i(n-m)\varphi_0}M_{n}.
\end{equation}

\item M2L operator: If $\|\Re \bx_0\|>(1+a)r$ and $L<(a-1)/(a+1)$ for some $a>1$ , then for $\|\Re \bx\|<r$, we have
\begin{equation}
\label{eqn:M2L_helm2d}
    u_M(\bx) = \sum_{m=-\infty}^\infty \tilde L_m J_m(\kappa \cdot r_x) e^{im \varphi_x},
\end{equation}
where 
\begin{equation}
    \tilde L_m=\sum_{n=-\infty}^\infty H^{(1)}_{n-m}(\kappa r_0) e^{-i(n-m) \varphi_0} M_n . 
    \end{equation}
\item L2L operator: Finally, we have
\begin{equation}
    u_L(\bx) = \sum_{m=-\infty}^\infty  \bar L_n J_m(\kappa \cdot r_x) e^{in \varphi_x},
\end{equation}
where 
    \begin{equation}
\label{eqn:L2L_helm2d}
    \bar L_m  = \sum_{n=-\infty}^\infty J_{n-m}(\kappa r_0) e^{-i(n-m)\varphi_0} L_n.
    \end{equation}
\end{itemize}
\end{theorem}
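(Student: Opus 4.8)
The plan is to obtain all three coefficient formulas by specializing Graf's addition formula (Theorem~\ref{thm:graf2d}) to the translation geometry and reading off the coefficient of each new basis function after exchanging the order of summation. Note that, unlike the Laplace translation theorem, the present statement asserts only the coefficient identities, so the task reduces to establishing each rearrangement as a valid identity of convergent series. The single geometric fact underlying all three cases is that, writing $\bx=(r_x,\varphi_x)$, $\bx_0=(r_0,\varphi_0)$ and $\bx-\bx_0=(r_x',\varphi_x')$ in complex polar coordinates and using $e^{i\varphi_x}=(x_1+ix_2)/r_x$, one has
\[
r_x' e^{i\varphi_x'} = r_x e^{i\varphi_x}-r_0 e^{i\varphi_0}, \qquad r_x'=\sqrt{r_x^2+r_0^2-2r_xr_0\cos(\varphi_x-\varphi_0)}.
\]
Thus $z:=\kappa r_x'$ is exactly the quantity $\sqrt{u^2+w^2-2uw\cos\alpha}$ appearing in Theorem~\ref{thm:graf2d}, with $\alpha=\varphi_x-\varphi_0$ and $\{u,w\}=\{\kappa r_x,\kappa r_0\}$; matching the relation $ze^{i\beta}=u-we^{-i\alpha}$ against this displayed identity fixes the auxiliary angle $\beta$ in terms of $\varphi_x'$ and $\varphi_x$ up to the irrelevant branch ambiguity, which is all that is needed since only $e^{i\beta}$ enters.

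For the M2M operator I would take $u=\kappa r_x$ (the far target, giving the new singular factor) and $w=\kappa r_0$ (the shift), apply the Hankel addition formula \eqref{eqn:addition_for_H} to rewrite each basis function $H_n^{(1)}(\kappa r_x')e^{in\varphi_x'}$ of the old expansion as a series in $\{H_m^{(1)}(\kappa r_x)e^{im\varphi_x}\}_m$, substitute into $u_M$, reindex, and collect the coefficient of $H_m^{(1)}(\kappa r_x)e^{im\varphi_x}$ to obtain \eqref{eqn:M2M_helm2d}. The convergence hypothesis $|we^{\pm i\alpha}|/|u|=|r_0e^{\pm i\alpha}|/|r_x|<1$ demanded by \eqref{eqn:addition_for_H} is supplied by Lemma~\ref{lma:bound_for_2d}: since the underlying sources lie within $r$ of $\bx_0$, the merged multipole is effectively supported within radius $\|\Re\bx_0\|+r$ of the origin, and the stated bound $L<(R-\|\Re\bx_0\|-r)/(R+\|\Re\bx_0\|+r)$ is precisely the condition \eqref{eqn:2dLipcond} with $r$ replaced by $\|\Re\bx_0\|+r$. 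The M2L operator is handled identically but with the radii interchanged, $u=\kappa r_0$ (far center, producing the Hankel factor $H_{n-m}^{(1)}(\kappa r_0)$) and $w=\kappa r_x$ (near target, producing the Bessel factor $J_m(\kappa r_x)$); the output is then the local expansion \eqref{eqn:M2L_helm2d}, and the Graf condition becomes $|r_xe^{\pm i\alpha}|/|r_0|<1$, which follows from Lemma~\ref{lma:bound_for_2d} applied with the far center $\bx_0$ in the role of $\bx$ and $R=ar$, under the hypotheses $\|\Re\bx_0\|>(1+a)r$ and $L<(a-1)/(a+1)$.

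The L2L operator is the simplest: here I would expand each $J_n(\kappa r_x')e^{in\varphi_x'}$ in the new Bessel basis using the Bessel addition formula \eqref{eqn:addition_for_J}, which as an identity between entire functions holds for all complex arguments and hence requires no separation hypothesis. Because the local coefficient set $\{L_{-P},\dots,L_P\}$ is finite, the rearrangement is a finite linear combination and \eqref{eqn:L2L_helm2d} follows with no convergence subtleties at all.

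I expect the main obstacle to be twofold and essentially technical. First, pinning down $\beta$ and the direction of the index shift so that the collected coefficients match \eqref{eqn:M2M_helm2d}, \eqref{eqn:M2L_helm2d}, and \eqref{eqn:L2L_helm2d} exactly, including the signs hidden in $J_{-k}=(-1)^kJ_k$ and $H_{-k}^{(1)}=(-1)^kH_k^{(1)}$, requires careful bookkeeping. Second, and more substantively, justifying the interchange of the two infinite sums in the M2M and M2L cases requires absolute convergence of the resulting double series; I would establish this from the hypothesized decay $|M_n|<A\big((1-L)r/(1+L)\big)^{|n|}$ together with the large-order asymptotics \eqref{eqn:bessel_asymp} of $J_n$ and $H_n^{(1)}$, which force the doubly indexed terms to be dominated by a summable geometric majorant precisely when the separation ratios above are strictly less than one. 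Once Fubini is justified the coefficient identities are immediate.
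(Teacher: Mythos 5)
Your route is exactly the paper's: the paper offers no written proof beyond the remark that the three operators ``follow directly from Graf's addition formula in Theorem~\ref{thm:graf2d}'', and your instantiations ($u=\kappa r_x$, $w=\kappa r_0$ for M2M; $u=\kappa r_0$, $w=\kappa r_x$ for M2L; the entire identity \eqref{eqn:addition_for_J} with no separation hypothesis for L2L), together with Lemma~\ref{lma:bound_for_2d} applied with $r$ replaced by $\|\Re \bx_0\|+r$ for M2M and with $R=ar$ for M2L, are precisely the intended ones, mirroring the Laplace proof of Theorem~\ref{thm:2d_trans}. One caution on the convergence discussion you add beyond the paper: a summable \emph{geometric} majorant cannot follow from the stated geometric bound on $|M_n|$ alone, since $H^{(1)}_m(\kappa r_x)$ grows factorially in the order $m$, so absolute convergence of the rearranged double series (indeed, of $u_M$ itself) requires the superexponential decay that coefficients of genuine sources inherit from the $J_n$ factors --- a point the paper sidesteps entirely, noting that even truncation estimates for these operators remain open.
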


Establishing precise truncation error estimates for the above translation operators is an open problem. For the real-coordinate 2-D Helmholtz FMM, we refer the reader to~\cite{AMINI200023,wenhui2016,wenhui2023}. These analyses rely on monotonicity properties of special functions with real arguments, which do not readily generalize to the complex-coordinate setting considered in this work.

\section{The 3-D complex-coordinate FMM}
\label{sec:zfmm3d}
In this section, we introduce the complex-coordinate FMM for the Laplace and Helmholtz kernels in three dimensions. As in Section~\ref{sec:zfmm2d}, we begin by establishing a  geometric condition on the complexification scheme that ensures the validity of the analytical continuation of the addition formulas. We then introduce the building blocks for the Laplace and Helmholtz equations. As for the 2-D case, we note that the analytic form of these operators is essentially identical to the classical real-point FMM. The main difference is the modification of the proofs of convergence to allow for complex coordinates.

\subsection{Geometric assumptions}
We now generalize the 2-D set~$\Vpsi$ to three dimensions. 

\begin{assumption}[3-D Assumption]
\label{assump:3d}
    We assume that the target locations, source locations, and the expansion centers are  vectors in $\C^3$, satisfying the condition:
\begin{align*}
    \Im \bx  = \psi(\Re \bx), \quad \bx \in \C^3,
\end{align*}
where $\psi:\R^3\to \R^3$ is a map with Lipschitz constant $L<1$ such that 
\begin{align*}
    \| \psi(\by_1)-\psi(\by_2)\| \le L\|\by_1-\by_2\|, \quad \by_1,\by_2\in \R^3.
\end{align*}
Let $V_\psi\subset \mathbb{C}^3$ denote the set of points satisfying the constraint
$\Im  \bx  = \psi(\Re{\bx}), \quad \bx \in \mathbb{C}^3.$
\end{assumption}

For a target location  $\bx=(\rho_x,\theta_x,\phi_x)$ and a source location $\by=(\rho_y,\theta_y,\phi_y)$ we denote their difference by $\bx-\by = (\rho_{xy},\theta_{xy},\phi_{xy})$. It is straightforward to verify that
\begin{equation*}
    \rho_{xy} = \sqrt{r_x^2+r_y^2-2r_xr_y\cos(\alpha)}, 
\end{equation*}
where $\cos\alpha=\cos\theta_x\cos\theta_y+\sin\theta_x\sin\theta_y\cos(\phi_x-\phi_y)$. 
The assumptions for the validity of~\eqref{eqn:addition_sph_hankel} are satisfied if $|\rho_x e^{\pm i\alpha}|/|\rho_y|<1.$ This condition can be rewritten as 
\begin{equation*}
    \left | \bx\cdot  \by  \pm i \sqrt{(x_1y_3-x_3y_1)^2+(x_2y_3-x_3y_2)^2+(x_1y_2-x_2y_1)^2} \right| < |\rho_x|^2.
\end{equation*}
As in the 2-D case, we show that if the real parts $\Re \bx$ and $\Re \by$ are suitably separated, and the Lipschitz constant $L$ is sufficiently small, then the sufficient condition for the convergence  of \eqref{eqn:addition_sph_hankel} will hold.
\begin{lemma}
\label{lma:bound_for_3d}
 Suppose that Assumption \ref{assump:3d} holds and let $\bx,\by\in\Vpsi$, where the real parts satisfy
\begin{equation*}
    \Vert \Re (\bx) \Vert  >R, \quad \Vert \Re (\by) \Vert<r
\end{equation*}
for some $R>r>0$. Further suppose that the Lipschitz constant $L < 1$ satisfies 
\begin{equation}
\label{eqn:3dLipcond}
    L< \frac{c^2 + 5 - \sqrt{12c^2 + 24}}{c^2 - 1} = z_{c}
\end{equation}
where $c = R/r>1$.
Then
\begin{equation}
\label{eqn:3dcond}
    |\rho_y e^{\pm i\alpha}|/|\rho_x| <1.
\end{equation}       
\end{lemma}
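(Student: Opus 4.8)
The plan is to follow the template of \Cref{lma:bound_for_2d} but to confront the new difficulty that in three dimensions $\rho_x\rho_y e^{\pm i\alpha}$ does not factor into a product of two scalar factors, as it did in the plane. I start from the reformulation recorded immediately before the statement: writing $\bx\cdot\by=\sum_k x_k y_k$ for the bilinear dot product and
\[
Q=(x_1y_2-x_2y_1)^2+(x_2y_3-x_3y_2)^2+(x_3y_1-x_1y_3)^2,
\]
one has $\rho_x\rho_y e^{\pm i\alpha}=\bx\cdot\by\pm i\sqrt Q$ and $|\rho_x|^2=|\rho_x^2|=|\bx\cdot\bx|$, so that the goal \eqref{eqn:3dcond} is equivalent to $|\bx\cdot\by\pm i\sqrt Q|<|\rho_x|^2$ for both signs. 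I then set $\bx=\ba+i\bu$ and $\by=\bb+i\bv$ with $\ba=\Re\bx$, $\bu=\Im\bx=\psi(\Re\bx)$, and similarly for $\by$. As in the planar proof, the Lipschitz hypothesis gives $\|\bu\|\le L\|\ba\|$ and $\|\bv\|\le L\|\bb\|$ (normalizing $\psi(\mathbf 0)=\mathbf 0$), while the separation hypothesis gives $\|\ba\|>R$ and $\|\bb\|<r$.

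The crucial observation is that $|\bx\cdot\by|$ and $\sqrt{|Q|}$ must not be estimated separately. At $L=0$ the points are real and $|\rho_x\rho_y e^{\pm i\alpha}|=\|\ba\|\,\|\bb\|$ exactly, so the conclusion must hold for \emph{every} $c=R/r>1$; a bare triangle inequality $|\bx\cdot\by|+\sqrt{|Q|}$ overshoots this (by up to a factor $\sqrt2$) and would spuriously force $c>\sqrt2$. The device that retains the cancellation is the complex Lagrange identity $(\bx\cdot\by)^2+Q=(\bx\cdot\bx)(\by\cdot\by)=\rho_x^2\rho_y^2$, valid for complex entries by analytic continuation of its polynomial form. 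Setting $W_\pm=\bx\cdot\by\pm i\sqrt Q$, this yields the product relation $|W_+||W_-|=|\rho_x|^2|\rho_y|^2$, whereas $|W_+|^2+|W_-|^2=2\big(|\bx\cdot\by|^2+|Q|\big)$. Viewing $|W_+|^2,|W_-|^2$ as the roots of a quadratic with this known sum and product, the larger modulus $m$ obeys $m^2=S+\sqrt{S^2-\Pi^2}$ with $S=|\bx\cdot\by|^2+|Q|$ and $\Pi=|\rho_x|^2|\rho_y|^2$, and a brief rearrangement shows that $m^2<|\rho_x|^4$ is equivalent to the symmetric inequality $2\big(|\bx\cdot\by|^2+|Q|\big)<|\rho_x|^4+|\rho_y|^4$, provided $|\rho_y|<|\rho_x|$ (which I verify from the separation).

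To finish, I decompose $\bx\cdot\by$ and $\bx\times\by$ into real and imaginary parts and estimate the resulting real dot and cross products through $|\ba\cdot\bb|\le\|\ba\|\|\bb\|$ and $\|\ba\times\bb\|\le\|\ba\|\|\bb\|$, together with the lower bounds $|\rho_x^2|\ge\|\ba\|^2-\|\bu\|^2$ and $|\rho_y^2|\ge\|\bb\|^2-\|\bv\|^2$. Substituting $\|\bu\|\le L\|\ba\|$, $\|\bv\|\le L\|\bb\|$, $\|\ba\|>R$, $\|\bb\|<r$ and writing $c=R/r$, the inequality collapses to a single scalar condition in $L$ and $c$. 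Carried out with the sharp real-case normalization, this reduces to $(c^2-1)L^2-2(c^2+5)L+(c^2-1)>0$, equivalently $c(1-L)>\sqrt{L^2+10L+1}$. This quadratic has equal leading and constant coefficients $c^2-1>0$, so the product of its two roots is $1$ and both are positive; hence they are $z_c$ and $1/z_c$ with $0<z_c<1$, and its smaller root is exactly $z_c=\big(c^2+5-\sqrt{12c^2+24}\big)/(c^2-1)$. Since the parabola opens upward, the hypothesis $L<z_c$ (and $L<1$) places $L$ strictly below the smaller root, making the quadratic positive and establishing \eqref{eqn:3dcond}.

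I expect the main obstacle to be precisely this last estimation step: pushing the real/imaginary decomposition of $S=|\bx\cdot\by|^2+|Q|$ through the triangle inequalities tightly enough that the real-case cancellation in the Lagrange identity is preserved at the level of the bound, so that the stray $O(L)$ and $O(L^2)$ terms assemble into exactly the polynomial $L^2+10L+1$ over the denominator $(1-L)$ rather than into a weaker combination that would only work for $c$ bounded away from $1$. The preceding reduction and the identification of $z_c$ as the relevant root are then elementary algebra.
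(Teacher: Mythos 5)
Your symmetrization is a genuinely different route from the paper's, and in one respect a cleaner one: the paper bounds $C=|W_+|^2$ directly, which forces it to control $\Re\sqrt{T}$ and $\Im\sqrt{T}$ separately via a two-case argument (splitting on $\|\bs\|^2\le 2\|\bt\|^2$ versus $\|\bs\|^2\ge 2\|\bt\|^2$), whereas your sum/product device $|W_+|^2+|W_-|^2=2S$, $|W_+|\,|W_-|=|\rho_x|^2|\rho_y|^2$ eliminates the square root of $Q$ entirely and retains the $|\rho_y|^4$ term that the paper drops. The reduction of \eqref{eqn:3dcond} to $2\left(|\bx\cdot\by|^2+|Q|\right)<|\rho_x|^4+|\rho_y|^4$ (given $|\rho_y|<|\rho_x|$, which the hypotheses do supply: $|\rho_y|^2\le(1+L^2)r^2$, $|\rho_x|^2\ge(1-L^2)R^2$, and $L<z_c$ implies $(1+L^2)/(1-L^2)<c^2$) is correct, and the identification of $z_c$ as the smaller root of $(c^2-1)L^2-2(c^2+5)L+(c^2-1)$ is sound algebra.

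The gap is in the last step, exactly where you flagged it, and it is twofold. First, the tools you list --- $|\ba\cdot\bb|\le\|\ba\|\|\bb\|$ and $\|\ba\times\bb\|\le\|\ba\|\|\bb\|$ applied term by term --- bound $|\bx\cdot\by|^2$ and $|Q|$ \emph{each} by $\bigl[(1+L^2)^2+4L^2\bigr]\|\ba\|^2\|\bb\|^2$, hence $S\le 2\bigl[(1+L^2)^2+4L^2\bigr]\|\ba\|^2\|\bb\|^2$, which is a factor of $2$ too large at $L=0$ (where the truth is $S=\|\ba\|^2\|\bb\|^2$ by the real Lagrange identity). Fed into your symmetric inequality this requires $c^2+c^{-2}>4$, i.e.\ $c>\sqrt{2+\sqrt{3}}\approx 1.93$, so the argument fails precisely in the delicate regime $c$ near $1$. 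The repair is to group before estimating, bounding $(\Re(\bx\cdot\by))^2+\|\bs\|^2$ and $(\Im(\bx\cdot\by))^2+\|\bt\|^2$ jointly via the real-vector inequality $|\bx_1\cdot\bx_2+\bx_3\cdot\bx_4|^2+\|\bx_1\times\bx_2+\bx_3\times\bx_4\|^2\le(\|\bx_1\|\|\bx_2\|+\|\bx_3\|\|\bx_4\|)^2$ --- which is exactly the key display in the paper's proof, so the cancellation problem you pushed down from the level of $|\bx\cdot\by|$ versus $\sqrt{|Q|}$ reappears at the real/imaginary level and must be handled by the same lemma the paper uses. Second, your assertion that the computation ``reduces to $(c^2-1)L^2-2(c^2+5)L+(c^2-1)>0$'' is not what your method yields: carried out with the grouped estimate, your route gives the sufficient condition $2\bigl[(1+L^2)^2+4L^2\bigr]c^2\le(1-L^2)^2(c^4+1)$, a genuinely different and in fact more permissive scalar inequality (at $c=2$ it admits $L<1/3$ versus $z_2\approx 0.172$; near $c\to 1$ its threshold behaves like $(c-1)/2$ versus $z_c\sim(c-1)/6$). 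The paper's polynomial $L^2+10L+1$ arises from the cross terms $-2\Re(\bx\cdot\by)\,\Im\sqrt{T}+2\Im(\bx\cdot\by)\,\Re\sqrt{T}$, which your approach deliberately avoids, so there is no reason your computation should land on it. Consequently, to prove the lemma \emph{as stated} you must still verify the implication ``$L<z_c$ $\Rightarrow$ your scalar condition'' (numerics and the asymptotics above suggest your admissible region strictly contains the paper's, but this comparison is an unproven step, not the elementary algebra you claim). Done carefully, your argument would actually sharpen the lemma's geometric condition, consistent with the paper's own remark in \Cref{sec:conc} that its constants are likely improvable.
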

\begin{proof}
    Without loss of generality, it suffices to show that
    \begin{equation}
    \label{eq:cest}
       C:= | \bx \cdot \by + i \sqrt{(\bx \times \by) \cdot (\bx \times \by) }|^2 < |\bx \cdot \bx|^2.
    \end{equation}
    Suppose $\by = \ba + i\bb$, $\bx = \bu +i\bv$ for $\ba,\bb,\bu,\bv \in \R^3$, then the assumptions 
    of the theorem imply that
    \begin{equation}
    \label{eq:abcest}
        \|\bb\| \le L \|\ba \|, \quad \| \bv\| \le L \|\bu\|, \quad \frac{\|\ba \|}{\|\bu\|} \le \frac{r}{R} = \frac{1}{c}. 
    \end{equation}
We first bound the term on the right-hand side of~\eqref{eq:cest}
    \begin{equation*}
        |\bx \cdot \bx|^2 = |\|\bu\|^2-\|\bv\|^2+2i\bu\cdot \bv|^2= (\|\bu\|^2-\|\bv\|^2)^2+(2\bu\cdot \bv)^2\ge (1-L^2)^2 \|\bu\|^4.
    \end{equation*}
Next we turn our attention to obtaining an upper bound for $C$. The dot product is given by
    \begin{equation*}
        \bx \cdot \by  = \ba \cdot \bu -\bb \cdot  \bv + i(\ba\cdot \bv+\bb\cdot \bu)\,,
    \end{equation*}
and let $T$ denote the term corresponding to the cross-product, i.e.
    \begin{equation*}
        T := (\bx\times\by)\cdot(\bx\times\by)= [(\ba + i\bb) \times (\bu+i\bv)]\cdot [(\ba + i\bb) \times (\bu+i\bv)].
    \end{equation*}
    Expanding this out yields
    \begin{equation*}
    \begin{aligned}
        T &= \| \ba \times \bu - \bb \times \bv \|^2 - 
        \| \ba \times \bv + \bb \times \bu \|^2
        + 2i (\ba \times \bu - \bb \times \bv) \cdot (\ba \times \bv + \bb \times \bu) \\
        &=\|\bs \|^2-\|\bt\|^2+2i \bs \cdot \bt \,,
     \end{aligned}
    \end{equation*}
where $\bs = \ba \times \bu - \bb \times \bv$ and $\bt = \ba \times \bv+\bb \times \bu$.
This gives us
\begin{equation}
\begin{aligned}
C &= \left(\Re (\bx \cdot \by) - \Im \sqrt{T} \right)^2 + \left( \Im (\bx\cdot \by) + \Re \sqrt{T}\right)^2 \,\\
&= \Re(\bx \cdot \by)^2 + \Im(\bx \cdot \by)^2 + \|\bs\|^2 + \|\bt \|^2 - 2\Re{(\bx \cdot \by)}\Im\sqrt{T} + 2\Im(\bx \cdot \by)  \Re\sqrt{T}
\end{aligned}
\end{equation}

Note that for any real vectors
    $\bx_1, \bx_2, \bx_3, \bx_4\in \R^3$
    \begin{align*}
        |\bx_1\cdot \bx_2+\bx_3\cdot \bx_4|^2 +\|\bx_1\times \bx_2+\bx_3\times \bx_4\|^2 & = \|\bx_1\|^2\|\bx_2\|^2+
        \|\bx_3\|^2\|\bx_4\|^2 \\
        & + 2(|\bx_1\cdot \bx_2||\bx_3\cdot \bx_4|+|\bx_1\times \bx_2||\bx_3\times \bx_4|) \\
        &\le \|\bx_1\|^2\|\bx_2\|^2+
        \|\bx_3\|^2\|\bx_4\|^2 + 2\|\bx_1\| \|\bx_2 \| \|\bx_3\| \| \bx_4 \|,\\
        & = (\|\bx_{1}\|\|\bx_{2}\| + \|\bx_{3}\|\|\bx_{4}\| )^2\,.
    \end{align*}

Using the triangle inequality, the estimates in~\eqref{eq:abcest}, and the equation above, the following identities hold: 
\begin{align*}
\Re(\bx \cdot \by) &\leq \| \ba \| \| \bu\| + \|\bb\| \|\bv\| \leq (L^2 + 1) \|\ba \| \|\bu\| \,,\\
\Im(\bx \cdot \by) &\leq \|\ba \| \|\bv\| + \| \bb\| \|\bu\| \leq 2L \|\ba\| \|\bu\|\,,\\
\|\bs\| &\leq \| \ba \| \| \bu\| + \|\bb\| \|\bv\| \leq (L^2 + 1) \|\ba \| \|\bu\| \,,\\
\|\bt\| &\leq \|\ba \| \|\bv\| + \| \bb\| \|\bu\| \leq 2L \|\ba\| \|\bu\|\,,\\
\Re(\bx\cdot \by)^2 + \|\bs \|^2 &\leq (\|\ba\|\|\bu\| + \|\bb\|\|\bv\|)^2 \leq (L^2+1)^2 \|\ba\|^2 \|\bu\|^2 \,,\\
\Im(\bx\cdot \by)^2 + \|\bt \|^2 &\leq (\|\ba\|\|\bv\| + \|\bb\|\|\bu\|)^2 \leq 4 L^2 \|\ba\|^2 \|\bu\|^2 \,,\\
\Re{\sqrt{T}} &\leq |\sqrt{T}| \leq \|\bs\| + \|\bt\| \leq (L+1)^2 \|\ba\|\|\bu\|\,.
\end{align*}
Note that all terms in $C$ can be bounded by a polynomial in $L$ times $\| \bu\|^4/c^2$, i.e. $C\leq p(L) \|\bu\|^4/c^2 $ for some polynomial $p$. Moreover, if 
$C \leq \frac{p(L)}{c^2} \|\bu\|^4 \leq (1-L^2)^2 \|\bu\|^4\leq (\bx \cdot \bx)^2$, then equation~\eqref{eq:cest} follows. Thus, for the estimate to work for all $L<L_{0}$, and $c>1$ with $L_{0}>0$, we require $p(0)=1$. The term $\Re(\bx \cdot\by)^2 + \|\bs\|^2$ already contributes a constant term of $1$. Thus the remaining terms in the estimate must be $0$ when $L$ is $0$. The term corresponding to $\Im(\bx\cdot \by)$ is zero when $L=0$ and thus a crude estimate can be used to bound the real part of $\sqrt{T}$. However, since $\Re(\bx\cdot \by)$ is not $0$ when $L$ is $0$, more care is required in bounding $\Im\sqrt{T}$. 
We do so by splitting the estimate into two cases. In the first case we suppose that $\|\bs\|^2 \le 2 \|\bt\|^2,$ from which it follows that
\begin{equation*}
        |2\Im \sqrt{T}| \le 2|\sqrt{T}| \le 2 \sqrt{|\|\bs \|^2-\|\bt\|^2+2i \bs \cdot \bt|} \le 2 \sqrt{|\|\bt \|^2+2i \| \bs \| \| \bt\||} \le  4 \|\bt\|.
    \end{equation*}
    In the second case, we take $\|\bs\|^2\ge 2\|\bt\|^2$. Then, $\Re T>0$ and hence $\Re \sqrt{T} \ge \frac{1}{\sqrt{2}} \sqrt{|T|}$. Then, 
    \begin{equation*}
        2|\Im \sqrt{T}|\le \frac{|T-\bar T|}{\sqrt{2} \sqrt{|T|}} \le \frac{2\sqrt{2} \|\bs\| \|\bt\|}{\sqrt{\|\bs\|^2-\|\bt\|^2}} \le 4 \|\bt\|. 
    \end{equation*}
    This estimate suffices since $\|\bt\|$ goes to $0$ as $L\to 0$.
Combining these estimates we get that 
\begin{equation*}
\begin{aligned}
p(L) &= (L^2+1)^2 + 4L^2 + 4L(L+1)^2 + 8L(L^2+1) \\
&\leq (L+1)^2 (L^2 + 10L+1)\,,
\end{aligned}
\end{equation*}
from which the result follows.
\end{proof}

 For clarity, we define:
\begin{equation}
\label{eqn:eta_const}
    C_L = \frac{\sqrt{L^2 + 10L + 1}}{1-L} 
\end{equation}
which satisfies $C_L < c $ when $L<z_c$. With these preparations, we now introduce the complex-coordinate FMM for the 3-D Laplace and 3-D Helmholtz kernels.

\subsection{The 3-D complex-coordinate Laplace FMM}
\label{sec:fmm_lap3d}
For 3-D Laplace equation, the analytically continued Green's function is given by 
\begin{equation*}
    G(\bx,\by)=\frac{1}{4\pi \rho_{xy}}, \quad \bx,\by \in \C^3.
\end{equation*}
We denote the source locations by $\{\by_{j}=(\rho_{y_j},\theta_{y_j},\phi_{y_j})\}_{j=1}^N \subset \C^2$ with corresponding charge strengths $\{\sigma_j\}_{j=1}^N$.  We consider the following sum
\begin{equation}
\label{eqn:lap_3d_pot}
    u(\bx) = \sum_{j=1}^N \frac{1}{\rho_{xy_j}} \sigma_j, \quad \bx\in \C^3
\end{equation}
where $\bx-\by_j=(\rho_{xy_j},\theta_{xy_j},\phi_{xy_j})$ for each $j=1,\ldots,N$.

\subsubsection{Multipole and local expansions}
\begin{lemma}
\label{lma:addition_3d_lap}
Suppose that Assumption \ref{assump:3d} holds and let $\bx=(\rho_x,\theta_x,\phi_x),\by=(\rho_y,\theta_y,\phi_y) \in \Vpsi$, and $\rho_{xy}=\sqrt{\rho_x^2+\rho_y^2-2\rho_x\rho_y\cos\alpha}.$  
If $|\rho_y e^{\pm i\alpha }|<|\rho_x|$ for some $\alpha \in \C$, then
\begin{equation}
\label{eqn:3dlaplace_addition}
    \frac{1}{\rho_{xy}}  =  \sum_{n=0}^\infty \frac{\rho_y^n}{\rho_x^{n+1}} P_n(\cos\alpha) .
\end{equation}
Moreover, for any $P>1$, the truncation error 
\begin{equation}\label{eqn:trunc_err}
    \left \vert  \frac{1}{\rho_{xy}} - \sum_{n=P}^\infty \frac{\rho_x^n}{\rho_y^{n+1}} P_n(\cos\alpha) \right\vert  \le \frac{1}{(c-1)|\rho_x|} \left(\frac{1}{c}\right)^P.
\end{equation}
where $c=\max\{|\rho_y e^{i\alpha}|/|\rho_x|, |\rho_y e^{-i\alpha}|/|\rho_x|\}^{-1}>1$.
\end{lemma}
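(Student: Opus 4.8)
The plan is to reduce the identity~\eqref{eqn:3dlaplace_addition} to the classical generating function for the Legendre polynomials, combined with an analytic-continuation argument of the same flavor already used in Lemma~\ref{lma:addtion_2d_laplace}. Factoring $\rho_x$ out of the radical, write $\rho_{xy} = \rho_x\sqrt{1 - 2t\cos\alpha + t^2}$ with $t = \rho_y/\rho_x$, so that
\begin{equation*}
    \frac{1}{\rho_{xy}} = \frac{1}{\rho_x}\,(1 - 2t\cos\alpha + t^2)^{-1/2},
\end{equation*}
where the branch of the square root is pinned down by requiring the right-hand side to equal $1/\rho_x$ at $t=0$, consistent with $\rho_x/\rho_{xy}\to 1$ as $\rho_y\to 0$. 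The generating-function identity $(1 - 2wt + t^2)^{-1/2} = \sum_{n=0}^\infty P_n(w)\,t^n$ is precisely the defining relation of the Legendre polynomials and holds as an identity of analytic functions of $t$; for fixed $w = \cos\alpha$, the singularities in $t$ are the roots $t = e^{\pm i\alpha}$ of $1 - 2wt + t^2$, so the power series converges exactly when $|t| < \min\{|e^{i\alpha}|,|e^{-i\alpha}|\}$. This is identical to the hypothesis $|\rho_y e^{\pm i\alpha}| < |\rho_x|$. Since the complex spherical coordinates are analytic continuations of the real ones, the identity extends from real points to $\Vpsi$.

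For the truncation error I would bound the tail of the series termwise. Using the Legendre estimate~\eqref{eqn:lege_bound}, namely $|P_n(\cos\alpha)| \le \max\{|e^{in\alpha}|,|e^{-in\alpha}|\} = \max\{|e^{i\alpha}|,|e^{-i\alpha}|\}^n$, each remaining term satisfies
\begin{equation*}
    \left|\frac{\rho_y^n}{\rho_x^{n+1}}P_n(\cos\alpha)\right| \le \frac{1}{|\rho_x|}\left(\frac{|\rho_y|}{|\rho_x|}\max\{|e^{i\alpha}|,|e^{-i\alpha}|\}\right)^n = \frac{1}{|\rho_x|}\left(\frac{1}{c}\right)^n,
\end{equation*}
by the definition of $c$. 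Summing the geometric tail from $n = P+1$ then gives $\tfrac{1}{|\rho_x|}\cdot\tfrac{(1/c)^{P+1}}{1-1/c} = \tfrac{1}{(c-1)|\rho_x|}(1/c)^P$, which matches the claimed bound. (I note that the summation index and the roles of $\rho_x,\rho_y$ in the displayed estimate~\eqref{eqn:trunc_err} appear to be typographical slips for the genuine tail $\sum_{n\ge P+1}\rho_y^n/\rho_x^{n+1}\,P_n(\cos\alpha)$.)

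The argument is essentially mechanical once the earlier Legendre-polynomial bound is in hand, so I do not expect a deep obstacle; the one point demanding care is the passage to complex arguments. Concretely, one must verify that the radius of convergence of the $t$-series, dictated by the nearest zero of $1 - 2t\cos\alpha + t^2$, coincides with the stated hypothesis, and that the square root is continued along the correct branch (the one fixed at $t=0$). Both are settled by observing that the convergence disk $|t| < \min\{|e^{i\alpha}|,|e^{-i\alpha}|\}$ avoids the branch points $e^{\pm i\alpha}$, so the factored expression and the power series are two analytic functions on a common connected domain that agree at $t=0$ and hence coincide throughout it.
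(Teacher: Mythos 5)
Your proof is correct and follows essentially the same route as the paper: factor $\rho_x$ out of the radical (the paper justifies this by noting $\rho_x$ has positive real part under Assumption~\ref{assump:3d}), invoke the Legendre generating function for $(1-2t\cos\alpha+t^2)^{-1/2}$ on the disk $|t|<\min\{|e^{i\alpha}|,|e^{-i\alpha}|\}$, and bound the tail termwise via $|P_n(\cos\alpha)|\le\max\{|e^{i\alpha}|,|e^{-i\alpha}|\}^{n}$ from the integral representation, followed by summing the geometric series. Your side remark that the displayed estimate~\eqref{eqn:trunc_err} contains typographical slips (the summation index and the swapped roles of $\rho_x$ and $\rho_y$) is also correct.
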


\begin{proof}
Let $\mu = \rho_y/\rho_x$. Then we rewrite $\rho_{xy}=\rho_x \sqrt{1+\mu^2-2\mu\cos\alpha}$ since $\rho_x$ has a positive real part.  Since $1-2\mu\cos\alpha +\mu^2=(1-\mu e^{i\alpha})  (1+\mu e^{i\alpha})$, we can use the Taylor expansions of $(1-z)^{-1/2}$ and $(1+z)^{-1/2}$ for $|z|<1$.  This gives
\begin{equation*}
    \frac{1}{\sqrt{1-2\mu\cos\alpha +\mu^2}} = \sum^{\infty}_{n=0} P_n(\cos\alpha) \mu^n
\end{equation*}
which gives \eqref{eqn:3dlaplace_addition}. To prove the bound on the truncation error, we observe that for any $\alpha \in \C$ and any $n \in \mathbb{N}$,  
     \begin{equation}\label{eqn:pn_bound}
         |P_n(\cos\alpha)|\le \max\{|e^{in\alpha}|,|e^{-in\alpha}|\},
     \end{equation}
which follows straightforwardly from the standard identity \cite{nist}
\begin{equation*}
   P_n(\cos\alpha) = \frac{1}{\pi} \int_0^\pi (\cos\alpha+i\sin\alpha \cos\phi)^n\, \mathrm{d}\phi.
\end{equation*}
The error bound \eqref{eqn:trunc_err} now follows immediately from \eqref{eqn:pn_bound} and the triangle inequality. 
\end{proof}

\begin{theorem}[Multipole expansion]
\label{thm:form_multiple_lap3d}
Let $R>r>0$ be given. Suppose that Assumption \ref{assump:3d} holds with $L<z_c$ and $c=R/r.$ Suppose that the source locations $\{\by_j\}_{j=1}^N\subset\Vpsi$  satisfy $\Vert \Re \by_j\Vert\le r$ for all $1\le j\le N$ and the target point $\bx\in\Vpsi$ satisfies $\Vert \Re \bx \Vert\ge R.$ The multipole expansion of the potential in \eqref{eqn:lap_3d_pot} is given by
\begin{equation}
\label{eqn:Lap3dformmp}
    u(\bx) = \sum_{n=0}^\infty \sum_{m=-n}^n M^m_{n} \frac{1}{\rho_x^{n+1}} Y_n^m(\theta_x,\phi_x),
\end{equation}
where the multipole expansion coefficients are:
\begin{equation*}
    M_n^m =  \sum_{j=1}^N  \rho_{y_j}^n Y_n^{-m}(\theta_{y_j}, \phi_{y_j})\sigma_j. 
\end{equation*}
Let $\tilde c=\frac{R}{C_L r}>1$ and $A=\sum_{j=1}^N |\sigma_j|$. For any $P>1$, the truncation error satisfies
\begin{equation}
\label{eqn:lap3d_err_mp}
   \left |u(\bx)- \sum_{n=0}^P \sum_{m=-n}^n M^m_{n} \frac{1}{\rho_x^{n+1}} Y_n^m(\theta_x,\phi_x) \right| \le \frac{A}{\sqrt{1-L^2} R (\tilde c-1)} \left( \frac{1}{\tilde c}\right)^{P}.
\end{equation}
\end{theorem}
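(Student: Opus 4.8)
The plan is to combine the scalar Legendre addition formula of Lemma~\ref{lma:addition_3d_lap} with the spherical-harmonic addition formula~\eqref{eqn:sphaddition}, and then to convert the per-source Legendre tail bound of Lemma~\ref{lma:addition_3d_lap} into the claimed uniform estimate using the geometric bounds already extracted inside the proof of Lemma~\ref{lma:bound_for_3d}. First I would verify the hypotheses of Lemma~\ref{lma:addition_3d_lap} for every source--target pair. Writing $\alpha_j$ for the angle between $\bx$ and $\by_j$, so that $\cos\alpha_j = \cos\theta_x\cos\theta_{y_j} + \sin\theta_x\sin\theta_{y_j}\cos(\phi_x-\phi_{y_j})$, the separation $\|\Re\bx\|\ge R > r \ge \|\Re\by_j\|$ together with $L<z_c$ lets me invoke Lemma~\ref{lma:bound_for_3d} to conclude $|\rho_{y_j}e^{\pm i\alpha_j}|/|\rho_x|<1$ for each $j$. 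Lemma~\ref{lma:addition_3d_lap} then applies term by term, giving
\begin{equation*}
\frac{1}{\rho_{xy_j}} = \sum_{n=0}^\infty \frac{\rho_{y_j}^n}{\rho_x^{n+1}} P_n(\cos\alpha_j).
\end{equation*}

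Next I would expand each Legendre polynomial using~\eqref{eqn:sphaddition}. For real angles one has $(Y_n^m)^*=Y_n^{-m}$, since the normalization and the associated Legendre factor depend only on $|m|$; because both sides of~\eqref{eqn:sphaddition} are analytic in the angular variables, analytic continuation yields
\begin{equation*}
P_n(\cos\alpha_j) = \sum_{m=-n}^n Y_n^{-m}(\theta_{y_j},\phi_{y_j})\, Y_n^m(\theta_x,\phi_x)
\end{equation*}
for complex angles. Substituting this into the Legendre expansion, multiplying by $\sigma_j$, summing over $j$, and interchanging the (absolutely convergent) sums produces~\eqref{eqn:Lap3dformmp} with coefficients $M_n^m=\sum_j \rho_{y_j}^n Y_n^{-m}(\theta_{y_j},\phi_{y_j})\sigma_j$. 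The key structural observation is that for each fixed $n$ the inner sum over $m$ collapses back to $\sum_j \sigma_j \rho_{y_j}^n \rho_x^{-(n+1)} P_n(\cos\alpha_j)$, so the order-$P$ partial sum of the multipole expansion equals exactly $\sum_j \sigma_j \sum_{n=0}^P \rho_{y_j}^n \rho_x^{-(n+1)} P_n(\cos\alpha_j)$.

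The remaining and main task is the error estimate. By the collapse above, the truncation error is $\sum_j \sigma_j$ times the per-pair Legendre tail, which Lemma~\ref{lma:addition_3d_lap} bounds by $\tfrac{1}{(c_j-1)|\rho_x|}\,c_j^{-P}$ with $c_j^{-1}=\max_\pm |\rho_{y_j}e^{\pm i\alpha_j}|/|\rho_x|$. The crux is to show $c_j\ge\tilde c$ and $|\rho_x|\ge\sqrt{1-L^2}\,R$ uniformly in $j$; since $\tfrac{1}{(c-1)}c^{-P}$ is decreasing in $c>1$, the triangle inequality over $j$ then gives the stated bound with $A=\sum_j|\sigma_j|$. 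For the denominator I would write $\bx=\bu+i\bv$ and use $\Re(\bx\cdot\bx)=\|\bu\|^2-\|\bv\|^2\ge(1-L^2)\|\bu\|^2$, so that $|\rho_x|^2=|\bx\cdot\bx|\ge(1-L^2)\|\bu\|^2\ge(1-L^2)R^2$. For $c_j$ I would reuse the estimate established inside the proof of Lemma~\ref{lma:bound_for_3d}, namely $|\rho_x\rho_{y_j}e^{\pm i\alpha_j}|^2 = C \le (L+1)^2(L^2+10L+1)\,\|\Re\by_j\|^2\|\bu\|^2$; dividing by $|\rho_x|^2\ge(1-L^2)\|\bu\|^2$ and using $\|\Re\by_j\|\le r$, $\|\bu\|\ge R$ yields
\begin{equation*}
c_j^{-1}\le \frac{(L+1)\sqrt{L^2+10L+1}}{1-L^2}\cdot\frac{r}{\|\bu\|}=\frac{\sqrt{L^2+10L+1}}{1-L}\cdot\frac{r}{\|\bu\|}=\frac{C_L r}{\|\bu\|}\le\frac{C_L r}{R}=\tilde c^{-1},
\end{equation*}
where the middle simplification uses $(1+L)/(1-L^2)=1/(1-L)$ and the definition~\eqref{eqn:eta_const} of $C_L$. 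I expect this last step to be the only delicate part, requiring careful bookkeeping of the two-sided ($\pm$) estimates and confirmation that $\tilde c>1$, which follows from $L<z_c$; the remainder is substitution and summation.
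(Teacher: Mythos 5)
Your proposal is correct and follows essentially the same route as the paper: expand each $1/\rho_{xy_j}$ via Lemma~\ref{lma:addition_3d_lap}, convert $P_n(\cos\alpha_j)$ through the analytically continued addition formula \eqref{eqn:sphaddition}, bound $|\rho_x|\ge\sqrt{1-L^2}\,R$, and sum a geometric series with ratio $C_L r/R=\tilde c^{-1}$. In fact your write-up usefully makes explicit the one step the paper's terse proof leaves implicit, namely that the estimates inside the proof of Lemma~\ref{lma:bound_for_3d} (i.e.\ $C\le(L+1)^2(L^2+10L+1)\|\Re\by_j\|^2\|\Re\bx\|^2$) yield the uniform per-pair bound $|\rho_{y_j}e^{\pm i\alpha_j}|/|\rho_x|\le C_L r/R$, which is exactly where the constant $C_L$ of \eqref{eqn:eta_const} enters.
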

\begin{proof}
The multipole expansion follows from Lemma~\ref{lma:addition_3d_lap} and \eqref{eqn:sphaddition}.
Since $$|\rho_x|>\sqrt{1-L^2} \Vert \Re (\bx) \Vert \ge \sqrt{1-L^2} R,$$ the truncation error is bounded by:
\begin{equation*}
    \sum_{n=P+1}^\infty \frac{1}{\sqrt{1-L^2} R} \left(C_L\frac{r}{R} \right)^n \sum_{j=1}^N |\sigma_j| = \frac{1}{\left(1-C_L \frac{r}{R}\right)}\frac{\sum_{j=1}^N |\sigma_j|}{\sqrt{1-L^2} R} \left(C_L \frac{r}{R}\right)^{P+1},
\end{equation*}
which completes the proof.
\end{proof}

The derivation of the local expansion and the related truncation error follows in a similar manner. The result is summarized in the theorem below. 
\begin{theorem}[Local expansion]
\label{thm:form_local_lap3d}
Let $R>r>0$ be given. Suppose that Assumption \ref{assump:3d} holds with $L<z_c$ and $c=R/r.$ Suppose further that the source locations $\{\by_j\}_{j=1}^N\subset\Vpsi$  satisfy $\Vert \Re \by_j\Vert\le r$ for all $1\le j\le N$ and the target point $\bx\in\Vpsi$ satisfies $\Vert \Re \bx \Vert\ge R.$ The local expansion of the potential \eqref{eqn:lap_3d_pot} is given by
\begin{equation*}
    u(\bx) = \sum_{n=0}^\infty \sum_{m=-n}^nL_{n}^m \rho_x^n Y_n^m(\theta_x,\phi_x), \quad \text{ where }
    L_n^m =  \sum_{j=1}^N \frac{1}{\rho_{y_j}^{n+1}} Y_n^{-m}(\theta_{y_j},\phi_{y_j})\sigma_j. 
\end{equation*}
Let $\tilde c=\frac{R}{C_L r}>1$ where the definition of $C_L$ is given in \eqref{eqn:eta_const} and $A=\sum_{j=1}^N |\sigma_j|$. For any $P>1$,  the truncation error satisfies
\begin{equation*}
\left  |u(\bx)- \sum_{n=0}^P \sum_{m=-n}^n L_{n}^m \rho_x^n Y_n^m(\theta_x,\phi_x) \right| \le \frac{A}{\sqrt{1-L^2} R (\tilde c-1)} \left( \frac{1}{\tilde c}\right)^{P}.
\end{equation*}
\end{theorem}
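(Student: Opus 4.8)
The plan is to mirror the proof of the multipole expansion (Theorem~\ref{thm:form_multiple_lap3d}), exploiting the symmetry of the kernel $1/\rho_{xy}$ under interchange of $\bx$ and $\by$. The local expansion is the companion regime in which the sources are \emph{far} from the expansion center (the origin) and the target is \emph{near} it; relative to $\bx$ and $\by$ the roles of $R$ and $r$ are therefore the mirror image of the multipole case, which is exactly what makes the coefficients $L_n^m=\sum_j \rho_{y_j}^{-(n+1)}Y_n^{-m}(\theta_{y_j},\phi_{y_j})\sigma_j$ and the positive-power expansion $\rho_x^n Y_n^m(\theta_x,\phi_x)$ convergent. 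First I would apply Lemma~\ref{lma:addition_3d_lap} in its other convergence regime, $|\rho_x e^{\pm i\alpha_j}|<|\rho_{y_j}|$, to each source–target pair, obtaining
\begin{equation*}
  \frac{1}{\rho_{xy_j}} = \sum_{n=0}^\infty \frac{\rho_x^n}{\rho_{y_j}^{n+1}} P_n(\cos\alpha_j),
\end{equation*}
where $\alpha_j$ is the complex angle between $\bx$ and $\by_j$. Expanding $P_n(\cos\alpha_j)$ with the spherical-harmonic addition formula~\eqref{eqn:sphaddition}, separating the $\bx$- and $\by_j$-dependent factors, and summing over $j$ against $\sigma_j$ then yields the claimed expansion together with the coefficients $L_n^m$.

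The second step is to verify that the convergence hypothesis $|\rho_x e^{\pm i\alpha_j}|/|\rho_{y_j}|<1$ actually holds for every $j$. This is precisely Lemma~\ref{lma:bound_for_3d} applied with the roles of $\bx$ and $\by$ interchanged: the far point is now $\by_j$ (with $\|\Re\by_j\|\ge R$) and the near point is $\bx$ (with $\|\Re\bx\|\le r$). Since the proof of that lemma uses only the Lipschitz bounds on the imaginary parts and the ratio $\|\Re(\text{near})\|/\|\Re(\text{far})\|\le r/R=1/c$, both of which are symmetric in the two points, the relabeling is legitimate and furnishes the sharper quantitative bound
\begin{equation*}
  \frac{|\rho_x e^{\pm i\alpha_j}|}{|\rho_{y_j}|} \le C_L\,\frac{r}{R} = \frac{1}{\tilde c},
\end{equation*}
with $C_L$ as in~\eqref{eqn:eta_const} and $\tilde c = R/(C_L r)>1$ guaranteed by $L<z_c$.

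Finally I would assemble the truncation error. Tail-bounding term by term and using $|P_n(\cos\alpha_j)|\le\max\{|e^{in\alpha_j}|,|e^{-in\alpha_j}|\}$ from~\eqref{eqn:pn_bound} gives
\begin{equation*}
  \left|\frac{\rho_x^n}{\rho_{y_j}^{n+1}}P_n(\cos\alpha_j)\right|
  \le \frac{1}{|\rho_{y_j}|}\left(\frac{1}{\tilde c}\right)^n,
\end{equation*}
and the lower bound $|\rho_{y_j}|\ge\sqrt{1-L^2}\,\|\Re\by_j\|\ge\sqrt{1-L^2}\,R$, coming from $|\rho_{y_j}|^2=|\by_j\cdot\by_j|\ge\|\Re\by_j\|^2-\|\Im\by_j\|^2\ge(1-L^2)\|\Re\by_j\|^2$, removes the $j$-dependence of the prefactor. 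Summing the geometric tail $\sum_{n=P+1}^\infty \tilde c^{-n}$ over $n$ and over $j$ against $|\sigma_j|$ then produces exactly $\frac{A}{\sqrt{1-L^2}\,R(\tilde c-1)}\tilde c^{-P}$. I expect the only genuine subtlety to be bookkeeping: confirming that the role-swap in Lemma~\ref{lma:bound_for_3d} is valid so that the convergence factor is again $C_L r/R$ rather than something larger, and checking that the per-source lower bound on $|\rho_{y_j}|$ is uniform in $j$; the analytic content is entirely inherited from the multipole argument together with the two addition formulas.
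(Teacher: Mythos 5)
Your proposal is correct and takes essentially the same route as the paper, which gives no separate proof but states that the local expansion ``follows in a similar manner'' to Theorem~\ref{thm:form_multiple_lap3d}: you apply Lemma~\ref{lma:addition_3d_lap} in the swapped regime $|\rho_x e^{\pm i\alpha_j}|<|\rho_{y_j}|$, separate variables via~\eqref{eqn:sphaddition}, justify the quantitative ratio $|\rho_x e^{\pm i\alpha_j}|/|\rho_{y_j}|\le C_L r/R=1/\tilde c$ by the (legitimately symmetric) role-swap in Lemma~\ref{lma:bound_for_3d}, and close with the bound $|\rho_{y_j}|\ge\sqrt{1-L^2}\,R$ and the geometric tail, exactly as the multipole proof does with $\bx$ and $\by$ interchanged. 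You also correctly work with the intended geometry (sources far, $\|\Re\by_j\|\ge R$; target near, $\|\Re\bx\|\le r$), silently repairing what is evidently a copy-paste slip in the theorem's printed hypotheses --- as confirmed by the 2-D and Helmholtz local-expansion statements and by the factor $R$ in the error bound's denominator, which requires the sources to be the distant points.
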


\subsubsection{Translation operators}

With Lemmas~\ref{lma:harmonic} and~\ref{lma:spharmonics}, it is straightforward to show that the following addition formulas hold. The proofs follow the same reasoning as Theorems 3.5.1, 3.5.2, and 3.5.3 in \cite{10.7551/mitpress/5750.001.0001} and are omitted.

\begin{lemma}[Addition formula]
\label{lma:addtion_lap3d}
Suppose that Assumption \ref{assump:3d} holds and let $\bx_0=(\rho_0,\theta_0,\phi_0)\in\Vpsi$ be the expansion center, $\bx = (\rho_x,\theta_x,\phi_x)\in\Vpsi$, and $\bx-\bx_0=(\rho',\theta',\phi')$. We have the following addition formulas.
\begin{enumerate}
    \item If $\|\Re \bx\|>\|\Re \bx_0\|$ and the Lipschitz constant satisfies $L<z_c$ for $c=\|\Re \bx\|/\|\Re \bx_0\|>0$, then
\begin{equation}\label{eqn:firstaddtion_lap3d}
    \frac{Y_j^k(\theta',\phi')}{(\rho')^{j+1}}
    =\sum_{n=0}^\infty \sum_{m=-n}^n \frac{J_m^{k} A_n^m A_j^k  }{A_{n+j}^{m+k}} \cdot \frac{\rho_0^n}{\rho^{n+j+1}_x} Y_n^{-m}(\theta_0,\phi_0)Y_{n+j}^{m+k}(\theta_x,\phi_x),
\end{equation}
where the~$A_n^m$'s are defined in Lemma~\ref{lma:spharmonics}  and
\begin{equation}
\label{eqn:first_J}
    J_m^k=\begin{cases}
        (-1)^{\min(|k|,|m|)} & \text{if } m\cdot k<0; \\
        1 & \text{otherwise.}
    \end{cases}
\end{equation}
\item If $0\not = \|\Re \bx\|<\|\Re \bx_0\|$ and the Lipschitz constant satisfies $L<z_c$ for $c=\|\Re \bx_0\|/\|\Re \bx\|$, then 
\begin{equation}\label{eqn:secondaddtion_lap3d}
    \frac{Y_j^k(\theta',\phi')}{(\rho')^{j+1}}
    =\sum_{n=0}^\infty \sum_{m=-n}^n \frac{J_m^{k} A_n^m A_j^k  }{A_{n+j}^{m-k}} \cdot \frac{\rho_x^n}{\rho_0^{n+j+1}} Y_{n+k}^{k-m}(\theta_0,\phi_0)Y_{n}^{m}(\theta_x,\phi_x),
\end{equation}
where 
\begin{equation}
\label{eqn:second_J}
    J_m^k=\begin{cases}
        (-1)^j(-1)^{\min(|k|,|m|)} & \text{if } m\cdot k>0; \\
        (-1)^j & \text{otherwise.}
    \end{cases}
\end{equation}
\item We have
\begin{equation}\label{eqn:thirdaddtion_lap3d}
    Y_j^k(\theta',\phi') (\rho')^j = \sum_{n=0}^j\sum_{m=-n}^n \frac{J_{n,m}^k A_n^m A_{j-n}^{k-m} }{A_j^k} \cdot Y_n^m(\theta_0,\phi_0) Y_{j-k}^{k-m}(\theta_x,\phi_x) \rho_0^n\rho_x^{j-n},
\end{equation}
where 
\begin{equation}
\label{eqn:third_J}
    J_{n,m}^{k} = 
    \begin{cases}
        (-1)^n (-1)^m & \text{ if } m\cdot k<0; \\ 
        (-1)^n (-1)^{k-m} & \text{ if } m\cdot k>0 \text{ and } |k|<m;\\
        (-1)^n & \text{otherwise}.
    \end{cases}
\end{equation}
\end{enumerate}
\end{lemma}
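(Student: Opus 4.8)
The three identities are the complex-coordinate analogues of the classical solid-harmonic addition theorems, and my plan is to reduce all of them to two ingredients already available: the differential-operator representation of solid harmonics in Lemma~\ref{lma:spharmonics}, and the scalar addition formula for $1/\rho'$. Since every function appearing is an analytic function of the Cartesian coordinates of $\bx$ and $\bx_0$ on $\Vpsi$, the strategy is to verify each identity as an equality of convergent analytic series, establish it first in the regime where all quantities are real (where it is classical), and then extend to complex points by analytic continuation. The only genuinely new step relative to the real case is the convergence of the two infinite series in \eqref{eqn:firstaddtion_lap3d} and \eqref{eqn:secondaddtion_lap3d}; this is exactly what Lemma~\ref{lma:bound_for_3d} supplies, since the hypothesis $L<z_c$ with $c=\|\Re\bx\|/\|\Re\bx_0\|$ (respectively its reciprocal) guarantees $|\rho_0 e^{\pm i\alpha}|/|\rho_x|<1$ (respectively $|\rho_x e^{\pm i\alpha}|/|\rho_0|<1$), which is the radius-of-convergence condition for the underlying expansion of $1/\rho'$.

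For \eqref{eqn:firstaddtion_lap3d} I would proceed as follows. Setting $\by=\bx_0$ in Lemma~\ref{lma:addition_3d_lap} and applying the spherical-harmonic addition formula \eqref{eqn:sphaddition} expresses $1/\rho'=1/|\bx-\bx_0|$ as the multipole series $\sum_{n,m}\rho_0^n Y_n^{-m}(\theta_0,\phi_0)\,\rho_x^{-(n+1)}Y_n^m(\theta_x,\phi_x)$, valid for $\|\Re\bx\|>\|\Re\bx_0\|$ by the preceding paragraph. By Lemma~\ref{lma:spharmonics}, $Y_j^k(\theta',\phi')/(\rho')^{j+1}$ is, up to the constant $A_j^k$, the operator $\partial_+^k\partial_{x_3}^{j-k}$ applied to $1/\rho'$ (with the corresponding $\partial_-$ variant when $k<0$), where the derivatives act on the Cartesian components of $\bx-\bx_0$ and hence may be taken with respect to $\bx$. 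Differentiating the multipole series term by term—legitimate because it converges locally uniformly in this regime—and recognizing, again via Lemma~\ref{lma:spharmonics}, that $\partial_+^k\partial_{x_3}^{j-k}$ sends the solid harmonic $\rho_x^{-(n+1)}Y_n^m(\theta_x,\phi_x)$ to a constant multiple of $\rho_x^{-(n+j+1)}Y_{n+j}^{m+k}(\theta_x,\phi_x)$, produces \eqref{eqn:firstaddtion_lap3d} with coefficient $A_n^m A_j^k/A_{n+j}^{m+k}$ and sign $J_m^k$ as in \eqref{eqn:first_J}.

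Formula \eqref{eqn:secondaddtion_lap3d} is obtained in exactly the same way, except that in the regime $\|\Re\bx\|<\|\Re\bx_0\|$ one instead uses the local (Taylor) expansion of $1/\rho'$, namely $\sum_{n,m}\rho_x^n Y_n^m(\theta_x,\phi_x)\,\rho_0^{-(n+1)}Y_n^{-m}(\theta_0,\phi_0)$, and differentiates; the differentiation now lowers the $\bx$-index toward regular solid harmonics $\rho_x^n Y_n^m$ while shifting the $\bx_0$-harmonic indices, giving the stated coefficients and the sign $J_m^k$ of \eqref{eqn:second_J}. Formula \eqref{eqn:thirdaddtion_lap3d} is simpler still: $Y_j^k(\theta',\phi')(\rho')^j$ is a solid harmonic polynomial of degree $j$ in the components of $\bx-\bx_0$, so its expansion is a finite binomial-type sum in $\bx$ and $\bx_0$ requiring no convergence hypothesis, and applying the derivative representation of Lemma~\ref{lma:spharmonics} to this finite expansion yields \eqref{eqn:thirdaddtion_lap3d} with the sign $J_{n,m}^k$ of \eqref{eqn:third_J}.

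The main obstacle is not the convergence bookkeeping, which is dispatched cleanly by Lemma~\ref{lma:bound_for_3d}, but rather the sign and normalization accounting encoded in $J_m^k$ and $J_{n,m}^k$. These factors arise precisely because Lemma~\ref{lma:spharmonics} represents $Y_n^m/\rho^{n+1}$ using $\partial_+$ when $m>0$ and $\partial_-$ when $m<0$; when the shift index $k$ and the summation index $m$ have opposite signs, composing the raising and lowering operators forces one to convert mixed products $\partial_+^{\,a}\partial_-^{\,b}$ into powers of $\partial_{x_3}^2$ via Lemma~\ref{lma:harmonic}, each such conversion contributing a factor $-1$ and accounting for the exponent $\min(|k|,|m|)$ in \eqref{eqn:first_J}. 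Carrying this combinatorial reduction through all sign-pattern cases is the only delicate part, and it is identical to the real-variable computation in Theorems 3.5.1--3.5.3 of \cite{10.7551/mitpress/5750.001.0001}; once the real identities are pinned down, analytic continuation completes the proof for complex coordinates.
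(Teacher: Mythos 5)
Your proposal is correct and follows essentially the same route the paper intends: the paper omits the proof, stating only that the identities follow from Lemmas~\ref{lma:harmonic} and~\ref{lma:spharmonics} by the same reasoning as Theorems 3.5.1--3.5.3 of \cite{10.7551/mitpress/5750.001.0001}, which is precisely the differential-operator representation and sign-bookkeeping computation you outline, extended to complex points by analyticity. Your explicit treatment of convergence on $\Vpsi$ via Lemma~\ref{lma:bound_for_3d} to justify the analytic continuation is the one genuinely new ingredient in the complex-coordinate setting, and it matches how the paper deploys the geometric assumption elsewhere.
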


The three addition formulas can be used to derive the translation operators (M2M, M2L, and L2L) for the 3-D Laplace equation. 
\begin{theorem}[M2M]
Let $r>0$ be given. Suppose that Assumption~\ref{assump:3d} holds with $\bx_0, \by_1,\cdots,\by_N \in V_\psi$ such that $\Vert \Re \by_j-\Re \bx_0\Vert<r.$ Moreover, let $R>0$ such that $R>r+\Vert \Re \bx_0\Vert.$ If $L$ is the Lipschitz constant of $\psi,$ suppose that $L<z_c$ for $c=(r+\Vert \Re \bx_0\Vert)/R.$ Consider the multipole expansion centered at $\bx_0=(\rho_0,\theta_0,\phi_0)\in\C^3$, with source location $\{\by_j\}_{j=1}^N$ and charge strengths $\{\sigma_j\}_{j=1}^N$, is given by
\begin{align*}
    u(\bx) = \sum_{n=0}^\infty \sum_{m=-n}^n \frac{M_n^m}{(\rho_{x}')^{n+1}} Y_n^m(\theta_{x}',\phi_{x}')
\end{align*}
where  $\bx-\bx_0=(\rho_x', \theta_x',\phi_x')$ and $\|\Re \bx-\Re \bx_0\|>r$.
For $\bx = (\rho_x,\theta_x,\phi_x)\in \C^3$ such that $\|\Re \bx\|>R$, the multipole expansion centered at zero is given by
\begin{align}
    u(\bx) = \sum_{j=0}^\infty \sum_{k=-j}^j \frac{\tilde M_j^k}{\rho_{x}^{j+1}} Y_j^k(\theta_{x},\phi_{x}), 
\end{align}
where 
\begin{align}
\label{eqn:L3dM2M}
    \tilde M_j^k = \sum_{n=0}^j \sum_{m=-n}^{n} \frac{ J_m^k A_{n}^{m} A_{j-n}^{k-m} \rho_0^{n}Y_{n}^{-m}(\theta_0,\phi_0)}{A_{j}^k} M_{k-m}^{j-n}. 
\end{align}
and the definition  of $J_m^k$ is given in \eqref{eqn:first_J}. 

Let $\tilde c = \frac{R}{C_L (r+|\|\Re \bx_0\|)}>1$ where the definition of $C_L$ is given in \eqref{eqn:eta_const} and $A=\sum_{j=1}^N|\sigma_j|$. For any $P\ge 1$, the truncation error satisfies
\begin{align}
\label{eqn:M2M_lap3d_error}
   \left\vert  u(\bx) -  \sum_{j=0}^P \sum_{k=-j}^j \frac{\tilde M_j^k}{\rho_{x}^{j+1}} Y_j^k(\theta_{x},\phi_{x}) \right\vert 
    \le    \frac{A}{\sqrt{1-L^2} R (\tilde c-1)} \left(\frac{1}{\tilde c}\right)^P.
\end{align}
\end{theorem}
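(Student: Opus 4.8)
The plan is to treat the translated series as a genuine origin-centered multipole expansion of the \emph{same} charges $\{\sigma_j\}$, so that the error estimate~\eqref{eqn:M2M_lap3d_error} is inherited directly from Theorem~\ref{thm:form_multiple_lap3d}, while the explicit coefficient formula~\eqref{eqn:L3dM2M} is produced by substituting the addition formula~\eqref{eqn:firstaddtion_lap3d} term by term and reindexing. First I would check the geometric hypotheses needed to apply~\eqref{eqn:firstaddtion_lap3d} to each term of the $\bx_0$-centered expansion. Since $\|\Re\bx\|>R>r+\|\Re\bx_0\|\ge\|\Re\bx_0\|$, we have $\|\Re\bx\|>\|\Re\bx_0\|$, and writing $c'=\|\Re\bx\|/\|\Re\bx_0\|$ one checks $c'>R/\|\Re\bx_0\|>R/(r+\|\Re\bx_0\|)=c$. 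Using that $c\mapsto z_{c}$ is increasing (a one-line monotonicity check on the right-hand side of~\eqref{eqn:3dLipcond}, which rises from $0$ at $c=1$ toward $1$ as $c\to\infty$), the hypothesis $L<z_{c}$ forces $L<z_{c'}$, so Lemma~\ref{lma:addtion_lap3d} applies through~\eqref{eqn:firstaddtion_lap3d} on the whole target region $\|\Re\bx\|>R$.

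Next I would substitute~\eqref{eqn:firstaddtion_lap3d} into $u(\bx)=\sum_{n,m}M_n^m(\rho_x')^{-(n+1)}Y_n^m(\theta_x',\phi_x')$, interchange the two doubly-infinite sums, and collect the coefficient of $\rho_x^{-(j+1)}Y_j^k(\theta_x,\phi_x)$. The interchange is justified by absolute convergence: the $M_n^m$ inherit the decay of the source data (they are $\sum_j(\rho_{y_j}')^nY_n^{-m}\sigma_j$ with $\|\Re\by_j-\Re\bx_0\|<r$), while the addition series converges geometrically for $\|\Re\bx\|>R$, so the combined double series is dominated by a convergent product of geometric-type series. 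Matching the exterior solid harmonic of degree $(j,k)$ forces the inner degree and azimuthal index to satisfy $n+(\text{inner order})=j$ and the azimuthal indices to sum to $k$; carrying out the resulting \emph{finite} reindexing (the inner order runs only from $0$ to $j$) and tracking the products of the normalization constants $A_n^m$ together with the sign factor $J_m^k$ from~\eqref{eqn:first_J} then yields precisely~\eqref{eqn:L3dM2M}.

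For the error estimate~\eqref{eqn:M2M_lap3d_error} I would avoid bounding the doubly-truncated sum directly. Instead, every source satisfies $\|\Re\by_j\|\le\|\Re\by_j-\Re\bx_0\|+\|\Re\bx_0\|<r+\|\Re\bx_0\|$, so the hypotheses of Theorem~\ref{thm:form_multiple_lap3d} hold verbatim with the source radius $r$ replaced by $r+\|\Re\bx_0\|$ and target radius $R$; this is exactly where the condition $L<z_{c}$ with $c=R/(r+\|\Re\bx_0\|)$ and the value $\tilde c=R/(C_L(r+\|\Re\bx_0\|))$ come from (recall $C_L$ from~\eqref{eqn:eta_const}). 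That theorem provides a convergent origin-centered multipole expansion of $u$ with truncation error at most $\tfrac{A}{\sqrt{1-L^2}\,R(\tilde c-1)}\,\tilde c^{-P}$. By uniqueness of the exterior solid-harmonic coefficients, the origin-centered coefficients produced by the translation in the second step coincide with those of Theorem~\ref{thm:form_multiple_lap3d}, so truncating the translated series incurs the same error, which is~\eqref{eqn:M2M_lap3d_error}. The main obstacle is the bookkeeping of the second paragraph—getting the $A_n^m$-ratios and the $\min(|k|,|m|)$ sign conventions inside $J_m^k$ to line up with~\eqref{eqn:L3dM2M}—together with a careful justification of the summation interchange; the monotonicity of $z_{c}$ and the reduction for the error bound are comparatively routine.
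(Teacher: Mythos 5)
Your proposal is correct and takes essentially the same route as the paper's own proof: the coefficient formula \eqref{eqn:L3dM2M} is obtained by substituting the addition formula \eqref{eqn:firstaddtion_lap3d} and reindexing, and the truncation bound \eqref{eqn:M2M_lap3d_error} is inherited from Theorem~\ref{thm:form_multiple_lap3d} with $r$ replaced by $r+\|\Re \bx_0\|$, using uniqueness of the origin-centered multipole coefficients. The details you add (the monotonicity of $z_c$ in $c$, the triangle-inequality bound $\|\Re \by_j\|<r+\|\Re \bx_0\|$, and the justification of the summation interchange) are left implicit in the paper's two-line proof but are consistent with it.
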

\begin{proof}
The coefficients of the shifted multipole expansion in \eqref{eqn:L3dM2M} are obtained using the addition formula in Lemma~\ref{lma:addtion_lap3d}. For the error bound in \eqref{eqn:M2M_lap3d_error}, note that  $\tilde M_k^j$ represents the coefficients of the unique multipole expansion centered at zero for $\bx=(\rho_x,\theta_x,\phi_x)$ with $\|\Re \bx\|>R$. Thus, Theorem~\ref{thm:form_multiple_lap3d} can be applied with $r$ replaced by $r+\|\Re \bx_0\|$. 
\end{proof}

\begin{theorem}[M2L]
\label{lma:lap3d_m2l}
Let $r>0$ be given and Assumption~\ref{assump:3d} holds, and  $\bx_0=(\rho_0,\theta_0,\phi_0)$, $\by_1,\cdots,\by_N\in V_\psi$ with $\Vert \Re \by_j-\Re \bx_0\Vert<r.$ Moreover, suppose that if $c>1$ with $\|\Re \bx_0\|>(1+c)r$ and  $L<z_c$ where  $L$ is the Lipschitz constant of $\psi$.  For the source locations $\{\by_j\}_1^N$ and charge strengths $\{\sigma_j\}_{j=1}^N,$ the corresponding multipole expansion is given by
\begin{align*}
    u(\bx) = \sum_{n=0}^\infty \sum_{m=-n}^n \frac{M_n^m}{(\rho_{x}')^{n+1}} Y_n^m(\theta_{x}',\phi_{x}')
\end{align*}
with  $\bx-\bx_0=(\rho_x', \theta_x',\phi_x').$ For target point $\bx = (\rho_x,\theta_x,\phi_x)\in V_\psi$ such that $\|\Re \bx\|<r$, the converted local expansion centered at zero is given by
\begin{align}
    u(\bx) = \sum_{j=0}^\infty \sum_{k=-j}^j  L_j^k{\rho_{x}^{j}} Y_j^k(\theta_{x},\phi_{x})
\end{align}
where 
\begin{align}
\label{eqn:L3dM2L}
     L_j^k = \sum_{n=0}^\infty \sum_{m=-n}^{n} \frac{ J_m^k A_{n}^{m} A_{j}^{k} Y_{j+n}^{m-k}(\theta_0,\phi_0)}{(-1)^nA_{j+n}^{m-k} \rho_0^{j+n+1}} M_{n}^{m}. 
\end{align}
and the definition of $J_m^k$ is given in \eqref{eqn:second_J}.

Let  $\tilde c =\frac{c}{C_L}>1$ and $A=\sum_{j=1}^N|\sigma_j|$.
For any $P\ge 1$,  the truncation error satisfies
\begin{align*}
\left\vert  u(\bx) -  \sum_{j=0}^P \sum_{k=-j}^j  L_j^k{\rho_{x}^{j}} Y_j^k(\theta_{x},\phi_{x}) \right\vert 
\le  \frac{A}{\sqrt{1-L^2} cr (\tilde c-1)} \left(\frac{1}{\tilde c}\right)^{P}
\end{align*}
where the definition of $C_L$ is given in \eqref{eqn:eta_const}.
\end{theorem}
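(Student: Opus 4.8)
The plan is to mirror the structure of the preceding M2M proof: first obtain the closed form \eqref{eqn:L3dM2L} for the converted coefficients by a term‑by‑term application of the second addition formula, and then obtain the truncation estimate not by directly bounding the translation series but by identifying $\{L_j^k\}$ with the coefficients of the \emph{direct} local expansion of the source field and invoking Theorem~\ref{thm:form_local_lap3d}.

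For the coefficient formula, I would start from the multipole expansion centered at $\bx_0$ and substitute the second addition formula \eqref{eqn:secondaddtion_lap3d} of Lemma~\ref{lma:addtion_lap3d} into each outgoing basis function $Y_n^m(\theta_x',\phi_x')/(\rho_x')^{n+1}$, taking the formula's $(j,k)$ to be the multipole index $(n,m)$ and using fresh indices $(n',m')$ for the formula's own summation. Each such term then becomes a series in the regular basis functions $\rho_x^{n'} Y_{n'}^{m'}(\theta_x,\phi_x)$, with coefficients depending on $\bx_0$ through factors of the form $\rho_0^{-(n+n'+1)} Y_{n'+m}^{m-m'}(\theta_0,\phi_0)$ together with the $A$‑ and $J$‑constants. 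After interchanging the order of the (now triple) summation and collecting the coefficient of each fixed $\rho_x^j Y_j^k(\theta_x,\phi_x)$, one reads off \eqref{eqn:L3dM2L}; the $(-1)^n$ and the particular sign pattern $J_m^k$ of \eqref{eqn:second_J} merely bookkeep the sign conventions generated by the addition formula. This part is routine reindexing, exactly analogous to the cited Greengard--Rokhlin derivation, so I would display only the substitution and the final collection step.

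The substantive step is the error bound. The key observation is that, within the region of validity, the infinite local series with coefficients $\{L_j^k\}$ equals $u(\bx)$; by linear independence of the regular basis (equivalently, uniqueness of the local expansion of a harmonic function, which persists under analytic continuation) these $L_j^k$ must coincide with the coefficients $\sum_{j'} \rho_{y_{j'}}^{-(j+1)} Y_j^{-k}(\theta_{y_{j'}},\phi_{y_{j'}})\sigma_{j'}$ produced by forming the local expansion of the source field \emph{directly}. I would then read off the effective geometry from the triangle inequality: since $\|\Re \by_j - \Re \bx_0\| < r$ and $\|\Re \bx_0\| > (1+c)r$, every source obeys $\|\Re \by_j\| > cr$, while every target obeys $\|\Re \bx\| < r$. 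Thus Theorem~\ref{thm:form_local_lap3d} applies verbatim with $R$ replaced by $cr$ and $r$ unchanged; its separation ratio becomes $\tilde c = cr/(C_L r) = c/C_L$, its hypothesis $L < z_c$ is precisely the one assumed here, and its prefactor specializes to $A/(\sqrt{1-L^2}\,cr\,(\tilde c - 1))$, yielding the stated estimate with the factor $(1/\tilde c)^P$.

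The hard part will be justifying the interchange of summation used to collect \eqref{eqn:L3dM2L} and the coefficient identification, both of which rest on absolute convergence of the triple series in the complex setting. Because $\rho$ can be small even when a point is far in Euclidean norm, I would invoke Lemma~\ref{lma:bound_for_3d} to guarantee $|\rho_y e^{\pm i\alpha}|/|\rho_x| < 1$ with the contraction factor controlled by $C_L$ from \eqref{eqn:eta_const}; combined with the magnitude estimate $|\rho_x| \ge \sqrt{1-L^2}\,\|\Re \bx\|$ used in Theorem~\ref{thm:form_local_lap3d}, this supplies a geometric majorant for the triple series and legitimizes both the rearrangement and the uniqueness argument. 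Once absolute convergence is secured, the remaining algebra and the application of Theorem~\ref{thm:form_local_lap3d} are mechanical.
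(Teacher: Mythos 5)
Your proposal is correct and follows essentially the same route as the paper: the coefficient formula \eqref{eqn:L3dM2L} is read off from the second addition formula \eqref{eqn:secondaddtion_lap3d}, and the truncation bound is obtained by identifying $\{L_j^k\}$ with the coefficients of the unique direct local expansion (exactly the uniqueness argument the paper makes explicit in its M2M proof) and applying Theorem~\ref{thm:form_local_lap3d} with $R = cr$, which gives $\tilde c = c/C_L$ and the stated prefactor. Your added care about the triangle-inequality geometry ($\|\Re \by_j\| > cr$) and absolute convergence via Lemma~\ref{lma:bound_for_3d} simply fills in details the paper's two-sentence proof leaves implicit.
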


\begin{proof}
The coefficients of the local expansion in \eqref{eqn:L3dM2L} is obtained by the second addition formula  in Lemma~\ref{lma:addition_3d_lap}. The error bound follows from Theorem~\ref{thm:form_local_lap3d} with $R=c r$. 
\end{proof}

The following theorem provides a method for shifting the center of a truncated local expansion. The proof is an immediate consequence of the addition formula \eqref{eqn:thirdaddtion_lap3d}. 
\begin{theorem}[L2L]
Suppose Assumption~\ref{assump:3d} holds, and the local expansion centered at $\bx_0=(\rho_0,\theta_0,\phi_0)\in\C^3$ has the form
\begin{align*}
    u(\bx) = \sum_{n=0}^P \sum_{m=-n}^n L_n^m (\rho_x')^n Y_n^m(\theta_{x}',\phi_{x}'), \quad \text{ where }\bx-\bx_0=(\rho_x', \theta_x',\phi_x').
\end{align*}
 The local expansion centered at zero is given by:
\begin{align}
    u(\bx) = \sum_{j=0}^P \sum_{k=-j}^j  \tilde L_j^k{\rho_{x}^{j}} Y_j^k(\theta_{x},\phi_{x}), 
\end{align}
where 
\begin{align}
\label{eqn:L3dL2L}
    \tilde L_j^k =  \sum_{n=j}^p \sum_{m=-n}^n 
    \frac{J_{n,m}^k A_{n-j}^{m-k}A_j^kY_{n-j}^{m-k}(\theta_0,\phi_0)\rho_0^{n-j}}{ A_n^m} L_n^m.
\end{align}
and the definition of $J_{n,m}^k$ is given in \eqref{eqn:third_J}.
\end{theorem}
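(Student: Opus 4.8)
The plan is to treat this L2L translation as a purely algebraic identity. Every solid harmonic $\rho^n Y_n^m(\theta,\phi)$ is a homogeneous polynomial of degree $n$ in the Cartesian coordinates, so a truncated local expansion is a polynomial of total degree at most $P$, and translating its center by $\bx_0$ yields another polynomial of total degree at most $P$. Consequently the L2L map is \emph{exact}: there is no truncation error to estimate—which is why, unlike the M2M and M2L theorems, this statement carries no error bound—and the entire content is the explicit computation of the transformed coefficients.

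First, I would substitute the addition formula \eqref{eqn:thirdaddtion_lap3d} into each term of the given expansion. Applying that formula with its degree/order $(j,k)$ replaced by the running pair $(n,m)$ and its internal summation indices renamed to $(p,q)$ rewrites each $\bx_0$-centered solid harmonic $(\rho_x')^n Y_n^m(\theta_x',\phi_x')$ as a finite combination of the origin-centered solid harmonics $\rho_x^{\,n-p} Y_{n-p}^{\,m-q}(\theta_x,\phi_x)$, weighted by $\rho_0^{\,p} Y_p^q(\theta_0,\phi_0)$ together with the constants $A_p^q A_{n-p}^{m-q}/A_n^m$ and the sign symbol $J_{p,q}^m$ from Lemma~\ref{lma:addtion_lap3d}. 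Because all sums are finite, this substitution is justified termwise and requires no separation or convergence hypothesis on $\bx$ relative to $\bx_0$.

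Next I would insert this into $u(\bx)=\sum_{n=0}^P\sum_{m=-n}^n L_n^m(\rho_x')^nY_n^m(\theta_x',\phi_x')$, interchange the finite order of summation, and collect the coefficient of each origin-centered basis function $\rho_x^{\,j}Y_j^k(\theta_x,\phi_x)$. This corresponds to the reindexing $j=n-p$, $k=m-q$, i.e.\ $p=n-j$ and $q=m-k$. The constraint $p\ge 0$ forces $n\ge j$, reproducing the lower summation limit in \eqref{eqn:L3dL2L}, while $-p\le q\le p$ together with $-n\le m\le n$ yields the stated range of $m$. Reading off the coefficient produces $\tilde L_j^k$ with exactly the normalization factors $A_{n-j}^{m-k}A_j^k/A_n^m$, the harmonic $Y_{n-j}^{m-k}(\theta_0,\phi_0)$, and the power $\rho_0^{\,n-j}$ appearing in \eqref{eqn:L3dL2L}.

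The main obstacle is the sign bookkeeping carried by the $J$ symbols. Under the reindexing, the factor inherited from the addition formula is $J_{p,q}^m = J_{n-j,\,m-k}^{\,m}$, whereas \eqref{eqn:L3dL2L} records it as $J_{n,m}^k$; the final step is therefore to verify, case by case from the definition \eqref{eqn:third_J} over the admissible index range—and using the parity relations built into $A_n^m$ and the $(-1)^n$ prefactors—that these two symbols contribute the same overall sign. This sign tracking, which ultimately descends from the $\partial_\pm$-derivative representation of the solid harmonics in Lemmas~\ref{lma:harmonic} and~\ref{lma:spharmonics}, is the only delicate point; once it is settled the identity follows immediately, and no analytic estimates are needed.
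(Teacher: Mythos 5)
Your approach coincides with the paper's: the paper's entire proof of this theorem is the one-line remark that it is ``an immediate consequence of the addition formula \eqref{eqn:thirdaddtion_lap3d},'' and your substitution--reindex--collect argument, together with the observation that the L2L map is exact (a truncated local expansion is a polynomial of degree $\le P$, so translation incurs no truncation error and no separation hypothesis is needed), is precisely that argument written out in full. Your correction of the inner harmonic to $Y_{n-p}^{m-q}$ (the paper's $Y_{j-k}^{k-m}$ in \eqref{eqn:thirdaddtion_lap3d} is a typo for $Y_{j-n}^{k-m}$) and your derivation of the limits $n\ge j$ from $p\ge 0$ are also right.

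The one place where your write-up is not yet a proof is exactly the step you defer: the claim that $J_{n-j,\,m-k}^{\,m}$ and $J_{n,m}^{k}$ ``contribute the same overall sign'' is \emph{false} if \eqref{eqn:third_J} is read literally, so the promised case-by-case check would not close; instead it surfaces typos in the paper's formulas. A concrete instance: take $u(\bx)=(\rho_x')\,Y_1^0(\theta_x',\phi_x')=x_3-x_{0,3}$, i.e.\ $L_1^0=1$, and look at the coefficient $\tilde L_1^0$ (so $(n,m)=(j,k)=(1,0)$). The correct value is $+1$, and your reindexed symbol gives $J_{0,0}^{0}=+1$, whereas the printed symbol in \eqref{eqn:L3dL2L} gives $J_{1,0}^{0}=(-1)^1=-1$ via the ``otherwise'' branch of \eqref{eqn:third_J}. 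Moreover \eqref{eqn:third_J} is itself mis-stated: expanding $(\rho')^2Y_2^0(\theta',\phi')=\tfrac{1}{2}\bigl(2(x_3-x_{0,3})^2-(x_1-x_{0,1})^2-(x_2-x_{0,2})^2\bigr)$ by \eqref{eqn:thirdaddtion_lap3d} and matching the bilinear terms $x_1x_{0,1}+x_2x_{0,2}$ forces the internal terms $(n,m,k)=(1,\pm 1,0)$ to carry the sign $+1$, while \eqref{eqn:third_J} assigns them $(-1)^n=-1$; the data are instead consistent with the first branch $(-1)^n(-1)^m$ applying whenever $m\cdot k\le 0$. So your instinct to rederive the signs from the $\partial_\pm$ representation of Lemma~\ref{lma:spharmonics} rather than trust the printed $J$ is the right one, but it must actually be carried out: as stated, your final sentence asserts an identity that fails, and the burden of the proof---which the paper also silently skips---is precisely to produce the correct sign symbol (your $J_{n-j,\,m-k}^{\,m}$, with \eqref{eqn:third_J} repaired) in place of the printed $J_{n,m}^{k}$.
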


\subsubsection{Translation operators along the $z$-direction}
If we assume all the multipole and local expansions are truncated at $P$ for some $P>1$, the direct implementation of the translation operators would require a computational cost of  $\cO(P^4).$ However, if the original center $\bx_0$ lies on the $z$-axis, the computational cost reduces to $\cO(P^3)$ due to the following lemma:
\begin{lemma}
\label{lma:zshift}
If $\bx_0=(\rho_0,\theta_0,0)$ for some $\rho_0\in \C$ and $\theta_0 \in \{0,\pi\}$, the translation operators \eqref{eqn:L3dM2M}, \eqref{eqn:L3dM2L} and \eqref{eqn:L3dL2L}  have the following simpler forms:
\begin{align}
\label{eqn:L3dM2M_zshift}
\tilde M_j^k & =  \sum_{n=0}^j \frac{A_n^0A_{j-n}^k Y^0_n(\theta_0,0)\rho_0^n }{A_{j}^k } M_{j-n}^k,\\
\label{eqn:L3dM2L_zshift}
L_j^k &= \sum_{n=0}^\infty \frac{ A_n^kA_j^k Y^0_n(\theta_0,0)}{(-1)^n A_{j+n}^0 \rho_0^{j+n+1}}M_n^k,\\
\label{eqn:L3dL2L_zshift}
\tilde L_j^k &= \sum_{n=j}^P \frac{A_{n-j}^0A_j^k Y^0_n(\theta_0,0) \rho_0^{n-j}}{(-1)^{n+j}A_n^k} L_n^k. 
\end{align}
\end{lemma}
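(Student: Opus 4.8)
The plan is to derive each of the three simplified operators by direct substitution of the special geometry $\theta_0\in\{0,\pi\}$, $\phi_0=0$ into the general translation formulas \eqref{eqn:L3dM2M}, \eqref{eqn:L3dM2L}, and \eqref{eqn:L3dL2L}, and then to exploit the fact that a spherical harmonic evaluated on the $z$-axis vanishes unless its order is zero, thereby collapsing each inner summation over the order index down to a single term.

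The analytic heart of the argument is a single observation. When $\theta_0\in\{0,\pi\}$ we have $\cos\theta_0=\pm1$, so $1-\cos^2\theta_0=0$. By the definition of the associated Legendre functions, $P_\ell^{p}(\cos\theta_0)=(-1)^p(1-\cos^2\theta_0)^{p/2}\frac{d^p}{dz^p}P_\ell(z)\big|_{z=\cos\theta_0}$ carries a factor $(1-\cos^2\theta_0)^{p/2}$, which vanishes for every $p\ge1$. Hence
\begin{equation*}
    Y_\ell^p(\theta_0,\phi_0)=\sqrt{\frac{(\ell-|p|)!}{(\ell+|p|)!}}\,P_\ell^{|p|}(\cos\theta_0)\,e^{ip\phi_0}=0\qquad\text{for all }p\neq0,
\end{equation*}
while for $p=0$ the factor $e^{ip\phi_0}=1$, so the nominal choice $\phi_0=0$ is immaterial; indeed $\theta_0\in\{0,\pi\}$ already places $\bx_0$ on the $z$-axis for any $\phi_0$.

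I would then apply this to each operator in turn. In \eqref{eqn:L3dM2M} the harmonic factor is $Y_n^{-m}(\theta_0,\phi_0)$, so only $m=0$ contributes; the double sum over $(n,m)$ collapses to a single sum over $n$, and since $m\cdot k=0$ falls in the ``otherwise'' branch of \eqref{eqn:first_J} we have $J_0^k=1$, recovering \eqref{eqn:L3dM2M_zshift}. In \eqref{eqn:L3dM2L} the harmonic forces the order index to zero, i.e.\ $m=k$, again collapsing to a single sum over $n$; substituting $m=k$ into \eqref{eqn:second_J} fixes the surviving sign coefficient, and a straightforward re-indexing produces \eqref{eqn:L3dM2L_zshift}. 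Similarly, in \eqref{eqn:L3dL2L} the harmonic forces $m=k$, and evaluating $J_{n,k}^{k}$ from \eqref{eqn:third_J} yields the sign $(-1)^{n+j}$ appearing in \eqref{eqn:L3dL2L_zshift}.

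The computation is essentially routine once the vanishing of off-axis harmonics is in hand: the entire analytic content is the reduction of the order summation from $2n+1$ terms to a single term. The only step requiring genuine care -- and the one I would check most carefully -- is the bookkeeping after the collapse, namely verifying that the surviving $J$-sign coefficients reduce exactly to the factors ($1$, and the displayed powers of $-1$) appearing in the simplified formulas, and that the leftover $A_\ell^p$ factors and powers of $\rho_0$ re-index correctly into the claimed single-sum expressions. I do not anticipate any conceptual obstacle beyond this sign-and-index tracking.
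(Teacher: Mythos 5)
Your proposal is correct and takes essentially the same approach as the paper: the paper's entire proof is the single observation that when $\theta_0\in\{0,\pi\}$ the factor $(1-x^2)^{m/2}$ in $P_n^m(x)$ forces $P_n^m(\cos\theta_0)=0$ for all $m>0$, so every spherical harmonic of nonzero order vanishes on the $z$-axis and the order sums in \eqref{eqn:L3dM2M}, \eqref{eqn:L3dM2L}, and \eqref{eqn:L3dL2L} collapse, ``from which the result follows immediately.'' Your write-up merely makes explicit the sign-and-index bookkeeping (the evaluation of the surviving $J$-coefficients and the re-indexing) that the paper leaves implicit.
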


\begin{proof}
 When $\theta_0=0$ or $\pi$, $\cos\theta_0=\pm 1$. The $(1-x^2)$ factor in $P_n^m(x)$ for $m>0$ makes all $P_n^m(\cos\theta_0)=0$ for $m >0,$ from which the result follows immediately.  
\end{proof}

Based on Lemma~\ref{lma:zshift}, a more efficient procedure with an overall cost of  $\cO(P^3)$ was proposed in \cite{white96} for the 3-D Laplace FMM with real coordinates, known as the ``\textit{point-and-shoot}'' method.  It is carried out in the following steps
\begin{enumerate}
    \item[(1)]  Rotate the multipole or local expansion such that the offset between centers lies on the $z$-axis. This rotation involves a rotation around the $z$-axis, which costs $\cO(P^2)$, followed by a rotation around the $y$-axis, which costs $\cO(P^3)$. The details of rotating an expansion around $z$- and $y$-axes will be discussed later in this section. 
    \item[(2)] Applying the translation operators in~\eqref{eqn:L3dM2M_zshift}-\eqref{eqn:L3dL2L_zshift} along the $z$-axis, requiring $\cO(P^3)$ work.
    \item[(3)] Reverse the  rotation steps in (1), which costs $\cO(P^2) + \cO(P^3)$.
\end{enumerate}
We now define the rotation operators used in steps (1) and (3). We first define the analytic continuation of 3-D rotation group. 
\begin{definition}[Complex rotation in $\C^3$]
We define the analytic continuation of $\mathcal{SO}(3)$ as 
\begin{equation*}
    \mathcal{SO}(3)(\C)=\{R\in\C^{3\times 3} | RR^T = I_{3\times 3}, \,\, \mathrm{det}(R) = 1\}.   
\end{equation*}
  Rotating a complex function $f:\C^3 \to \C$ by a complex rotation $R \in \mathcal{SO}(3)(\C)$ is defined as $$(R \circ f)(\bx) = f(R^T \bx),\quad R\in  \mathcal{SO}(3)(\C).$$ 
\end{definition}

\subsubsection{Rotation around the $z$-axis and the $y$-axis}
\label{sec:rotation_operator}
To rotate around the $z$-axis, assume the target center $\bc \in \C^3$.  The rotation is performed using the matrix:
\begin{equation*}
    R_z(\beta) = \begin{pmatrix}
        \cos\beta & -\sin\beta & 0 \\ 
         \sin\beta & \cos\beta & 0 \\
       0 & 0 & 1
    \end{pmatrix}
\end{equation*}
where $\beta\in \C$ is chosen such that: $$\cos\beta = \dfrac{c_1}{\sqrt{c_1^2+c_2^2}}, \quad \sin\beta = \dfrac{c_2}{\sqrt{c_1^2+c_2^2}}.$$   
Since this rotation affects only the azimuthal component, applying $R_z(\beta)$ to a multipole or local expansion $\{M_n^m\}$ or $\{L_n^m\}$ results in the following diagonal transformation
\begin{equation}
    \tilde M_n^m = M_n^m e^{-im\beta}, \quad \quad \tilde L_n^m = L_n^m e^{-im\beta}
\end{equation}
for all $0\le n\le P$ and $|m|\le n$. 
In the rotated coordinates, the new center becomes $\tilde \bc = (\sqrt{c_1^2+c_2^2},0,c_3)$.

After rotating around the z-axis, without loss of generality, assume the center  $\bc = (c_1,0,c_3)$  under Euclidean coordinate. The rotation around the  $y$-axis is performed using the matrix 
\begin{equation*}
R_y(\alpha) = \begin{pmatrix}
    \cos\alpha & 0 & -\sin\alpha \\ 
    0 & 1 & 0 \\ 
    \sin\alpha & 0 & \cos\alpha
\end{pmatrix}
\end{equation*}
where $\alpha \in \C$ such that $$\cos\alpha = \dfrac{c_3}{\sqrt{c_1^2+c_3^2}}\quad \sin\alpha = \dfrac{c_1}{\sqrt{c_1^2+c_3^2}}.$$ If $\alpha$ is real, the rotated multipole expansion coefficients $\{\tilde M_n^m\}$ are given by:
\begin{equation}
\label{eqn:rotatingspharm}
    \tilde M_n^m =  \sum_{m,m'=-n}^n d^n_{m,m'}(\alpha)M_n^{m'},\quad -n\le m \le n, \quad n\ge 0
\end{equation}
where $d^n_{m,m'}(\alpha)\in \C$ are coefficients  (known as the \emph{small Wigner-d matrices}) defined as:
\begin{align}
\label{eqn:wignder-d}
    d^n_{m,m'}(\alpha) & = \left[(n + m')!(n - m')!(n + m)!(n - m)!\right]^{\frac{1}{2}} \cdot \nonumber \\ 
    &
\sum_{s=s_{\text{min}}}^{s_{\text{max}}}
\left[
\frac{(-1)^{m' - m + s} \left(\cos \frac{\alpha}{2}\right)^{2n + m - m' - 2s}
\left(\sin \frac{\alpha}{2}\right)^{m' - m + 2s}}
{(n + m - s)! s! (m' - m + s)! (n - m' - s)!}
\right]
\end{align}
with $s_{\min}=\max(0,m-m')$ and $s_{\max}=\min(n+m,n-m')$ \cite{chirikjian2016harmonic}. 
In~\eqref{eqn:wignder-d},  $s$ runs over all integers for which the factorial arguments and the powers are non-negative. Then $d_{m,m'}^n(\alpha)$ is entire since its elements are entire functions.

Although \eqref{eqn:wignder-d} provides an explicit representation of the rotation operator about the $y$-axis, the direct evaluation is still expensive. We use an efficient numerical method developed in~\cite{GIMBUTAS20095621}  for rotating spherical harmonic expansions. Let $\{\tilde M_n^m\}$ be the multipole expansion  obtained by  rotating $\{M_n^m\}$ about the $y$-axis with an angle $\alpha \in \C$. Then, for any $\bx,\by\in \C^3$ satisfying $\by = R_y(\alpha)^T\bx$, the following equality holds:
\begin{equation}
\label{eqn:y_rotation_target}
   \sum_{m=-n}^n M_n^m \bar P_n^m(\cos\theta_x)e^{im \phi_x} =  \sum_{m=-n}^n  \tilde M_n^m \bar P_n^m(\cos \theta_y) e^{im\phi_y},
\end{equation}
for each $n=0,\ldots,P.$ Here, the radial functions can be omitted because they are invariant to rotation. We define
\begin{equation}
\label{eqn:tar_fun_rot}
    F_n(\theta_y,\phi_y)=\sum_{m=-n}^n  \tilde M_n^m \bar P_n^m(\cos \theta_y) e^{im\phi_y}. 
\end{equation}
The key observation is that if we choose a fixed $\theta_y$, then for each $n$, the function $F_n(\theta_y,\cdot)$ in~\eqref{eqn:tar_fun_rot} is expressed in a Fourier basis. Then we can recover the coefficients (multiplied by constant $\bar P_n^n(\cos\theta_y)$) efficiently using FFT. 
To obtain the coefficients $\{\tilde M_n^m\}$, for each $0\le n\le P$ and $-P\le j\le P$, we define $\phi_j=\frac{2\pi j}{2P+1}$ and  evaluate
\begin{align}
    f_{n,j} & = F_n(\frac{\pi}{2},\phi_j)=\sum_{m=-n}^n \tilde M_n^m \bar P_n^m(0) e^{i m \phi_j}, \\  g_{n,j}& =\frac{\partial}{\partial \theta} F_n(\frac{\pi}{2},\phi_j)=\sum_{m=-n}^n \tilde M_n^m \frac{d}{d\theta}\bar P_n^m(0) e^{i m \phi_j}.
\end{align}
Although we do not know $\{\tilde M_n^m\}$, these values can be obtained from the expansion in the $(\theta_x, \phi_x)$ coordinates using \eqref{eqn:y_rotation_target} and the known coefficients ${M_n^m}$, with a total cost of $\mathcal{O}(P^3)$.  Using the orthogonality of the Fourier basis $\{e^{im\phi}\}$ on the grid $\{\phi_j\}_{j=-P}^P$,  the rotated coefficient $\tilde M_{n}^m$ can then be computed via
\begin{equation}
\label{eqn:mp_y_rot}
    \tilde M_n^m = \frac{1}{2P+1}  \frac{\sum_{j=-P}^P \left( f_{n,j} \tilde P_n^m(0) +g_{n,j}  \frac{d}{d\theta}\tilde P_n^m(0) \right) e^{-im \phi_j} }{(\tilde P_n^m(0))^2+(\frac{d}{d\theta}\tilde P_n^m(0))^2} . 
\end{equation}
The use of both $F_n$ and its derivative prevents the denominator in~\eqref{eqn:mp_y_rot} from being zero. 
It can be efficiently computed via $P$ applications of the 1-D FFT on arrays of length $2P + 1$, leading to a cost of $\mathcal{O}(P^2 \log P)$. Thus, the total  cost of rotating $\{M_n^m\}$ to $\{\tilde M_n^m\}$ around the $y-$axis is $\mathcal{O}(P^3)$.

\subsection{The 3-D complex-coordinate Helmholtz FMM}
\label{sec:fmm_helm3d}
For  the 3-D Helmholtz equation,  the complexified Green's function is given by 
\begin{equation*}
G_\kappa(\bx,\by) = \frac{i}{4\pi} h_0^{(1)}(\kappa \rho_{xy})= \frac{e^{i\kappa \rho_{xy}}}{4\pi \rho_{xy}} ,  \quad   \bx,\by \in \C^3.
\end{equation*}
We focus on the evaluating the following potential:
\begin{equation*}
    u(\bx)  =\sum_{j=1}^N \frac{e^{i\kappa \rho_{xy_j}}}{\rho_{xy_j}}\sigma_j,\ \quad \bx\in \C^3
\end{equation*}
where the source locations are  $\{\by_{j}=(\rho_{y_j},\theta_{y_j},\phi_{y_j})\}_{j=1}^N \subset \C^3$ with charge strengths $\{\sigma_j\}_{j=1}^N$. Here, $\bx-\by_j=(\rho_{xy_j},\theta_{xy_j},\phi_{xy_j})$ for each $j=1,\ldots,N$.

\subsubsection{Multipole and local expansions}

\begin{theorem}[Multipole expansion]
Suppose Assumption~\ref{assump:3d} holds, and the source locations $\{\by\}_{j=1}^N \subset V_\psi$ with charge strengths $\{\sigma_j\}_{j=1}^N$  satisfy $\Vert \Re \by_j\Vert < r$, with some $r>0$ for all $1\le j\le N$.
For target point $\bx \in V_\psi$ satisfies $\Vert \Re \bx \Vert > R$ for some $R>r$, if the Lipschitz constant  $L<z_{c}$ for $c=R/r$, the multipole expansion of the potential is given by
\begin{equation*}
    u(\bx) = \sum_{n=0}^\infty \sum_{m=-n}^n M^m_{n} h_n^{(1)}(\kappa \rho_x) Y_n^m(\theta_x,\phi_x), 
\end{equation*}
with the multipole expansion coefficients given by
\begin{equation*}
    M_n^m =(2n+1) \sum_{j=1}^N  j_n(\kappa \rho_{y_j}) Y_n^{-m}(\theta_{y_j}, \phi_{y_j})\sigma_j.
\end{equation*}
Let $\tilde c=\frac{R}{C_L r}>1$ where the definition of $C_L$ is given in~\eqref{eqn:eta_const} and $A=\sum_{j=1}^N |\sigma_j|$. 
When $P$ is sufficiently large the truncation error satisfies
\begin{equation}
\label{eqn:h3d_mp_err}
    \left\vert u(x)- \sum_{n=0}^P \sum_{m=-n}^n M^m_{n} h_n^{(1)}(\kappa \rho_x) Y_n^m(\theta_x,\phi_x)\right\vert\lesssim \frac{1}{\sqrt{2(1-L^2)}R(\tilde c-1)} \left(\frac{1}{\tilde c}\right)^{P}.
\end{equation}
\end{theorem}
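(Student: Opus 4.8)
The plan is to mirror the 3-D Laplace multipole argument (Theorem~\ref{thm:form_multiple_lap3d}), replacing the Taylor expansion of $1/\rho_{xy}$ by the Gegenbauer addition formula \eqref{eqn:addition_sph_hankel} for the spherical Hankel function of order zero. For each source $\by_j$ I would write $\kappa \rho_{xy_j}=\sqrt{(\kappa\rho_x)^2+(\kappa\rho_{y_j})^2-2(\kappa\rho_x)(\kappa\rho_{y_j})\cos\alpha_j}$ with $\cos\alpha_j=\cos\theta_x\cos\theta_{y_j}+\sin\theta_x\sin\theta_{y_j}\cos(\phi_x-\phi_{y_j})$, and apply \eqref{eqn:addition_sph_hankel} with the source radius $\kappa\rho_{y_j}$ as the inner argument and the target radius $\kappa\rho_x$ as the outer one. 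The hypotheses $\|\Re\by_j\|<r$, $\|\Re\bx\|>R$ together with $L<z_c$ are precisely those of Lemma~\ref{lma:bound_for_3d}, which guarantees $|\rho_{y_j}e^{\pm i\alpha_j}|/|\rho_x|<1$ for every $j$, so the addition formula converges term-by-term.

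Next I would expand each Legendre factor $P_n(\cos\alpha_j)$ using the spherical-harmonic addition theorem \eqref{eqn:sphaddition}, taking $(\theta_1,\phi_1)=(\theta_{y_j},\phi_{y_j})$ and $(\theta_2,\phi_2)=(\theta_x,\phi_x)$ so that the target angle carries the free index $m$. Since $(Y_n^m)^*=Y_n^{-m}$ for the paper's normalization, this produces a double sum over $(n,m)$ in which the source factors $j_n(\kappa\rho_{y_j})Y_n^{-m}(\theta_{y_j},\phi_{y_j})$ separate from the target factors $h_n^{(1)}(\kappa\rho_x)Y_n^m(\theta_x,\phi_x)$. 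Summing against the $\sigma_j$ and interchanging the finite sum over $j$ with the series—legitimate by the absolute convergence just established—collects the source factors into the stated coefficients $M_n^m=(2n+1)\sum_j j_n(\kappa\rho_{y_j})Y_n^{-m}(\theta_{y_j},\phi_{y_j})\sigma_j$. This step is essentially bookkeeping; the only care needed is to track the parity constant and the factor $(2n+1)$ from \eqref{eqn:addition_sph_hankel} so they land in the coefficients.

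For the truncation error I would bound the tail $n>P$ of the per-source Gegenbauer series directly, so that the cancellation is transparent. Three ingredients combine: the large-order product asymptotic for spherical Bessel/Hankel functions derived from \eqref{eqn:bessel_asymp}, namely $|j_n(\kappa\rho_{y_j})\,h_n^{(1)}(\kappa\rho_x)|\lesssim |\kappa\rho_{y_j}|^n/(\sqrt2\,(2n+1)|\kappa\rho_x|^{n+1})$; the Legendre bound $|P_n(\cos\alpha_j)|\le(\max\{|e^{i\alpha_j}|,|e^{-i\alpha_j}|\})^n$ from \eqref{eqn:lege_bound}; and the quantitative estimate inside the proof of Lemma~\ref{lma:bound_for_3d}, which yields both $\max\{|\rho_{y_j}e^{\pm i\alpha_j}|\}/|\rho_x|\le C_L r/R=1/\tilde c$ and $|\rho_x|\ge\sqrt{1-L^2}\,R$, with $C_L$ as in \eqref{eqn:eta_const}. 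Multiplying these, the coefficient factor $(2n+1)$ cancels against the $(2n+1)$ in the asymptotic, and each tail term is dominated by $\tfrac{1}{\sqrt{2(1-L^2)}R}(1/\tilde c)^n$ up to a $\kappa$-dependent constant absorbed into $\lesssim$; summing the resulting geometric series from $P+1$ and multiplying by $A=\sum_j|\sigma_j|$ gives \eqref{eqn:h3d_mp_err}.

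The main obstacle is the error estimate rather than the formal expansion. Because \eqref{eqn:bessel_asymp} is a leading-order large-order asymptotic, the clean product bound on $j_n h_n^{(1)}$ holds only for $n$ large—exactly why the statement is qualified by ``sufficiently large $P$''—and making it precise requires controlling the subleading corrections uniformly in the source arguments $\kappa\rho_{y_j}$; these stay in a bounded region since $|\rho_{y_j}|<(1+L)r$ follows from $\|\Re\by_j\|<r$ and $L<1$, so a uniform constant exists. The other delicate point is that the lemma as stated only asserts $|\rho_y e^{\pm i\alpha}|/|\rho_x|<1$, whereas here I need the sharper ratio $\le C_L r/R$ together with the lower bound $|\rho_x|\ge\sqrt{1-L^2}\,R$; both must be read off from the interior of the proof of Lemma~\ref{lma:bound_for_3d}, exactly as in the Laplace case (Theorem~\ref{thm:form_multiple_lap3d}).
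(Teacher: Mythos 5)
Your proposal is correct and follows essentially the same route as the paper's (very terse) proof: the expansion comes from the Gegenbauer addition formula \eqref{eqn:addition_sph_hankel} combined with the spherical-harmonic addition theorem \eqref{eqn:sphaddition}, and the truncation bound is the error estimate attached to \eqref{eqn:addition_sph_hankel} applied with $c=\tilde c=R/(C_L r)$, together with $|\rho_x|\ge\sqrt{1-L^2}\,R$. Your two flagged subtleties --- that the quantitative ratio $|\rho_{y}e^{\pm i\alpha}|/|\rho_x|\le C_L r/R$ and the lower bound on $|\rho_x|$ must be extracted from the interior of the proof of Lemma~\ref{lma:bound_for_3d}, and that the large-order asymptotics only justify the bound for sufficiently large $P$ --- are exactly the points the paper handles implicitly (as in Theorem~\ref{thm:form_multiple_lap3d}), so no gap remains.
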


\begin{proof}
The multipole expansion formula arises from the addition formula for the spherical Hankel function \eqref{eqn:addition_sph_hankel} combined with the addition formula for the Legendre function \eqref{eqn:sphaddition}. The error bound follows from \eqref{eqn:addition_sph_hankel} with  $c=R/(C_Lr)$ .
\end{proof}

We can prove a similar result regarding expressing the potential $u$ as a local expansion, which is summarized below. 
\begin{theorem}[Local expansion]
Suppose Assumption~\ref{assump:3d} holds, and the source locations  $\{\by\}_{j=1}^N \subset V_\psi$ with charge strengths $\{\sigma_j\}_{j=1}^N$ satisfy   $\Vert \Re \by_j\Vert> R$, with some $R>0$ for all $1\le j\le N$.
For target point $\bx \in V_\psi$ satisfies $\Vert \Re \bx \Vert< r$ for some $r<R$, if the Lipschitz constant $L<z_c$ for $c=R/r$, the local expansion of the potential is given by
\begin{equation*}
    v_P(\bx) = \sum_{n=0}^\infty \sum_{m=-n}^nL_{n}^m j_n(\kappa \rho_x) Y_n^m(\theta_x,\phi_x), 
\end{equation*}
with local expansion coefficients given by
\begin{equation*}
    L_n^m =  (-1)^n (2n+1)\sum_{j=1}^N h_n^{(1)}(\kappa \rho_{y_j}) Y_n^{-m}(\theta_{y_j},\phi_{y_j})\sigma_j. 
\end{equation*}
Let $\tilde c=\frac{R}{C_L r}>1$ where the definition of $C_L$ is given in~\eqref{eqn:eta_const} and $A=\sum_{j=1}^N |\sigma_j|$. 
When $P$ is sufficiently large, the truncation error satisfies:
\begin{equation*}
    \left\vert u(x)- \sum_{n=0}^P \sum_{m=-n}^n L^m_{n} j_n(\kappa \rho_x) Y_n^m(\theta_x,\phi_x)\right\vert\lesssim \frac{1}{\sqrt{2(1-L^2)}R(\tilde c-1)} \left(\frac{1}{\tilde c}\right)^{P}.
\end{equation*}
\end{theorem}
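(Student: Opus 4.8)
The plan is to prove this as the ``source-far, target-near'' counterpart of the multipole expansion theorem established immediately above, reusing the same pair of addition formulas but with the roles of the source and target radii exchanged. First I would write each kernel term as $e^{i\kappa\rho_{xy_j}}/\rho_{xy_j} = i\kappa\, h_0^{(1)}(\kappa\rho_{xy_j})$ and apply the spherical-Hankel addition formula \eqref{eqn:addition_sph_hankel}, now assigning the \emph{small} argument to the target radius $\rho_x$ and the \emph{large} argument to the source radius $\rho_{y_j}$ (the reverse of the multipole case). This produces $h_0^{(1)}(\kappa\rho_{xy_j}) = \sum_{n\ge 0}(-1)^n(2n+1)\, j_n(\kappa\rho_x)\, h_n^{(1)}(\kappa\rho_{y_j})\, P_n(\cos\alpha_j)$, where $\alpha_j$ is the complex angle between $\bx$ and $\by_j$ fixed by $\cos\alpha_j = \cos\theta_x\cos\theta_{y_j} + \sin\theta_x\sin\theta_{y_j}\cos(\phi_x-\phi_{y_j})$.

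Next I would invoke the Legendre addition formula \eqref{eqn:sphaddition} to split each $P_n(\cos\alpha_j)$ into a sum over $m$ of $(Y_n^m)^*(\theta_{y_j},\phi_{y_j})\,Y_n^m(\theta_x,\phi_x)$, collect the basis functions $j_n(\kappa\rho_x)Y_n^m(\theta_x,\phi_x)$ (which depend only on the target), and sum against $\sigma_j$ over all sources. Reading off the coefficient of each basis function and absorbing the overall constant exactly as in the multipole proof yields $L_n^m = (-1)^n(2n+1)\sum_{j}h_n^{(1)}(\kappa\rho_{y_j})Y_n^{-m}(\theta_{y_j},\phi_{y_j})\sigma_j$, with the $(-1)^n$ inherited directly from \eqref{eqn:addition_sph_hankel}.

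I would then certify that the termwise expansion is legitimate by checking its convergence hypothesis $|\rho_x e^{\pm i\alpha_j}|/|\rho_{y_j}| < 1$ for every source simultaneously. This is precisely \Cref{lma:bound_for_3d} with the labels of source and target interchanged: here $\bx$ is the inner point ($\|\Re\bx\|<r$) and each $\by_j$ is the outer point ($\|\Re\by_j\|>R$), so under the hypothesis $L<z_c$ with $c=R/r$ the lemma gives $\max_\pm|\rho_x e^{\pm i\alpha_j}|/|\rho_{y_j}| \le C_L\,r/R = 1/\tilde c < 1$ uniformly in $j$, since $C_L<c$ guarantees $\tilde c=R/(C_L r)>1$.

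Finally, for the truncation error I would bound the tail $n>P$ via the asymptotics \eqref{eqn:bessel_asymp}, which give $|j_n(\kappa\rho_x)\,h_n^{(1)}(\kappa\rho_{y_j})| \lesssim \frac{1}{\sqrt2\,(2n+1)\,\kappa}\,|\rho_x e^{\pm i\alpha_j}|^n/|\rho_{y_j}|^{n+1}$; after the $(2n+1)$ factor cancels and the overall $i\kappa$ from $e^{i\kappa\rho}/\rho = i\kappa\,h_0^{(1)}(\kappa\rho)$ is absorbed (its modulus cancelling the $1/\kappa$), each source contributes a tail $\lesssim \frac{|\sigma_j|}{\sqrt2\,|\rho_{y_j}|}\cdot\frac{1}{\tilde c-1}\,(1/\tilde c)^{P}$. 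Using the geometric lower bound $|\rho_{y_j}| \ge \sqrt{1-L^2}\,\|\Re\by_j\| \ge \sqrt{1-L^2}\,R$ (which also excludes the degenerate small-$\rho$ behaviour flagged in the remark on complex polar coordinates) and summing over $j$ with $A=\sum_j|\sigma_j|$ gives the stated bound $\frac{A}{\sqrt{2(1-L^2)}\,R(\tilde c-1)}(1/\tilde c)^{P}$. The main obstacle is the uniform angular control in this last step: one must ensure $\max_\pm|\rho_x e^{\pm i\alpha_j}|/|\rho_{y_j}| \le 1/\tilde c$ holds for all $j$ at once, which is exactly where the interchanged \Cref{lma:bound_for_3d} and the smallness condition $L<z_c$ enter, and one must keep the factor $e^{\pm i\alpha_j}$ inside the geometric ratio rather than estimating it separately, since in the complex-coordinate setting it is not bounded by $1$.
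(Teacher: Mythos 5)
Your proposal is correct and takes essentially the same route as the paper: the paper likewise derives the local expansion by applying the Gegenbauer addition formula \eqref{eqn:addition_sph_hankel} (with $j_n$ carrying the target radius and $h_n^{(1)}$ the source radii) together with the Legendre addition formula \eqref{eqn:sphaddition}, and obtains the truncation bound from the Gegenbauer error estimate with $c=R/(C_L r)$, which rests on Lemma~\ref{lma:bound_for_3d} applied with the roles of source and target exchanged, exactly as you do. Your explicit bookkeeping of the $i\kappa$ prefactor, the $(2n+1)$ cancellation, the retention of $e^{\pm i\alpha_j}$ inside the geometric ratio, and the lower bound $|\rho_{y_j}|\ge\sqrt{1-L^2}\,R$ merely fills in details the paper leaves implicit (and your bound correctly carries the factor $A=\sum_j|\sigma_j|$, which appears to be omitted by typo in the paper's displayed estimate).
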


\subsection{Translation operators}
For the 3-D Helmholtz FMM, translation operators can be derived via algebraic manipulations of Graf’s addition formulas \eqref{eqn:addition_for_J} and \eqref{eqn:addition_for_H}.  However, the computational cost of this approach is $\mathcal{O}(P^4)$.  Similar to the Laplace case, we use the ``point-and-shoot'' method to reduce the cost to $\mathcal{O}(P^3)$. Indeed, since the rotation of an expansion is independent of its radial part, the same operators described in Section~\ref{sec:rotation_operator} can be applied here. Therefore, we only need to focus on the translation operators along the $z$-axis. Below we summarize the $\cO(P^3)$ procedure to perform M2L as an illustrative example. The other two operators can be implemented in a similar way with the same computational cost. 

Without loss of generality, assume the multipole expansion is centered at the origin and the new center $\bc = (0, 0, c_3)$ lies on the $z$-axis, where $c_3 \in \C$.   We  aim to compute the translated local expansion coefficients $\{L_n^m\}$  from $\{M_n^m\}$ such that,  for $\by = \bx + \bc$, 
\begin{equation}
\label{eqn:noncoaxial}
   \sum_{n=0}^P\sum_{m=-n}^n M_n^m h_n^{(0)}(\kappa \rho_x) \bar P_n^m(\cos\theta_x) e^{im\phi_x}=  \sum_{n=0}^P \sum_{m=-n}^n L_n^m j_n(\kappa \rho_y)  \bar P_n^m(\cos \theta_y) e^{im\phi_y}. 
\end{equation}
Under this translation, we always have $\phi_x = \phi_y$. By the orthogonality of the Fourier basis $\{e^{im\phi}\}$, the problem reduces to finding $\{L_n^m\}$ such that
\begin{equation}
\label{eqn:coaxial}
    \sum_{n\ge |m|}^P M_n^m h_n^{(0)}(\kappa \rho_x) \bar P_n^m(\cos\theta_x) =  \sum_{n\ge |m|}^P  L_n^m j_n(\kappa \rho_y)  \bar P_n^m(\cos \theta_y), \quad  \text{ for any} -P\le m \le P.
\end{equation}
Let $(w_j,x_j), j=1,\ldots,N_{\rm quad}$ denote the Gauss-Legendre rule with $N_{\rm quad}$ being the largest integer  $\le P\cdot 2.5$. For each $-P\le m\le P,$ we define
\begin{equation}
\label{eqn:target_fun_M2L}
    F_{m}(\rho,\theta) =  \sum_{n\ge |m|}^P  L_n^m j_n(\kappa \rho)  \bar P_n^m(\cos \theta). 
\end{equation}
The observation is that if we choose a fixed $\rho$, then for each $m$, the $F_m(\rho,\cdot)$ in~\eqref{eqn:target_fun_M2L} is expanded by normalized associated Legendre polynomial basis. The coefficients (multiplied by constant $j_n(\kappa \rho)$) can be recovered by projecting $F_m(\rho,\cdot)$ to each normalized associated Legendre polynomial. 
We evaluate the following matrices
\begin{align}
\label{eqn:fun_sample}
    f_{m,j} & = F_{m}(\rho,\arccos(x_j))=\sum_{n\ge |m|}^P  L_n^m j_n(\kappa \rho)  \bar P_n^m(x_j) , \\
\label{eqn:der_sample}
    g_{m,j} & = \frac{\partial}{\partial \rho}F_{m}(\rho,\arccos(x_j))=\sum_{n\ge |m|}^P  L_n^m \kappa j_n'(\kappa \rho)  \bar P_n^m(x_j). 
\end{align}
for  some $\rho>0, -P\le m\le P$ and $j=1,2,\ldots,N_{\rm quad}$. Although we do not know $\{L_n^m\}$ in~\eqref{eqn:target_fun_M2L}, we can still evaluate $F_m$ and its derivative  using the known multipole expansion $\{M_n^m\}$ in the $(\rho_x, \theta_x)$ coordinates via \eqref{eqn:coaxial}.   Using the orthogonality of the normalized associated Legendre polynomials~\eqref{eqn:lenP_ortho}, the coefficients  $\{L_n^m\}$ are computed as:
\begin{equation}
\label{eqn:helm3d_M2L_z_axis}
    L_m^n =2\pi  \frac{\sum_{j=1}^{N_{\rm quad}} w_j \bar P_n^m(x_j) \left (j_n(\kappa r) f_{m,j}+\kappa j_n'(\kappa r) g_{m,j}\right)}{j_n(\kappa r)^2+\kappa^2 j_n'(\kappa r)^2}
\end{equation}
for $0\le n\le p, -n\le m \le n.$  The use of both $F_m$ and its derivative is to prevent the denominator in~\eqref{eqn:helm3d_M2L_z_axis} from being zero, since $j_n$ and $j_n'$ cannot both be zero. Evaluating $\{f_{m,j}\}$ and $\{g_{m,j}\}$ in~\eqref{eqn:fun_sample} and~\eqref{eqn:der_sample}  has a computational cost of $\mathcal{O}(P^3)$. Evaluating ${L_n^m}$ from these quantities via~\eqref{eqn:helm3d_M2L_z_axis} also requires a cost of $\mathcal{O}(P^3)$. 

 Given the order of associated Legendre polynomial basis, the number of quadrature nodes $N_{\rm quad}$ should, in principle, be sufficient to evaluate the inner products/integrals exactly. However, \eqref{eqn:helm3d_M2L_z_axis} can suffer from catastrophic errors when $\kappa$ is large. See \Cref{sec:catastrophic} for a detailed discussion on this issue.

As for the 2-D case, we note that establishing truncation error estimates for the above translation operators remains an open problem. For the case of the classical 3-D Helmholtz FMM with real coordinates, we refer the reader to~\cite{https://doi.org/10.1002/num.23148}, which analyzes the truncation errors of the analytic translation operators derived from addition theorems.  Their analysis also relies on certain monotonicity properties of special functions with real arguments.

\section{Details of the multi-level algorithm}
\label{sec:tree_algorithm}
In this section, we provide implementation details of our complex-coordinate FMM,  which follows closely  the adaptive  versions of the classical FMM~\cite{doi:10.1137/0909044,CHENG1999468}. For clarity of exposition, we illustrate the  details of 2-D complex-coordinate  FMM  which naturally generalize to the 3-D complex-coordinate FMM.

\subsection{Tree generation}
Although the point locations lie in $\C^2$, which is a four-dimensional space, the intrinsic geometry is only two dimensional, as the imaginary components are continuous and monotonic functions of the real components. Therefore, instead of constructing a hierarchical tree in $\C^2$, we need only construct an adaptive tree of boxes on the real parts of the points, then complexify the box centers. We will say that a point belongs to a box if it's real part lives in the box.

We assume, without loss of generality, that the real parts of all point locations lie within a square box in $\R^2$. The $\ell = 0$ level of the tree will consist of that box. We will recursively split boxes until there are no more than~$N_s$ points in each leaf box. To recursively construct level $\ell + 1$ from level $\ell$, we subdivide each box at level $\ell$ that contains more than $N_s$ points into four equally-sized child boxes in $\R^2$. This procedure continues until a finest level $L$ is reached, such that every box at level $L$ contains at most $N_s$ points. Within this tree, every box that contains more than $N_s$ points is called a parent box (non-leaf box), while it is called a leaf box if it contains $N_s$ points or fewer.  

All the boxes in level~$\ell$ will have the same width, which we denote~$w_\ell$, and every box~$b$ has a real center, which we denote~$\bx_b\in\R^2$. During formation of the tree, we impose a \emph{level-restriction} condition: for any pair~$b_1$ and~$b_2$ of childless boxes in level $l_1$ and $l_2$ respectively, if $|l_1-l_2|>2$ then their real parts must satisfy $\|\bx_{b_1}-\bx_{b_2}\|_\infty>\frac{k}{2}(w_{l_1}+w_{l_2})$. 

Once the tree is constructed, we complexify the box centers  in order to define  multipole and local expansions associated to those boxes.  Ideally, for   box $b$ with a real center $\bx_b$,   we would determine its complex center using the same complexification scheme $\psi$ for the source and target locations such that 
\begin{equation}
\label{eqn:complexify_center_analytic}
    \tilde \bx_b = \bx_b+i\psi(\bx_b).
\end{equation}
This, however, requires knowing the function $\psi$ which, for the interest of generality, we do not assume is given. Instead, we use a practical heuristic in which $\psi(\bc)$ in~\eqref{eqn:complexify_center_analytic} is approximated by the average of the imaginary parts of the complex point locations in box $a$. That is, we define the complexified center as 
\begin{equation}
\label{eqn:complexify_center_average}
\tilde \bx_b = \bx_b+i \frac{\sum_{\bx \in \text{box b}}\psi(\bx) }{n_b}
\end{equation}
where $n_b$ is the number of points contained in box $b$. This heuristic is reasonable since the imaginary components are assumed to be smooth functions of the real coordinates. Therefore, we have  
\begin{equation*}
    \frac{\sum_{\bx \in \text{box b}} \bx_b}{n_b} \approx \bx_b \,\, \Rightarrow  \,\,
  \frac{\sum_{\bx \in \text{box b}} \psi(\bx)  }{n_b} \approx \psi(\bx_b),
\end{equation*} 
which ensures the complexified centers consistent with the definition of centers in the multipole and local expansions while maintaining algorithmic structure simply and general.

\subsection{List generation}
\label{sec:list_generation}
For any non-negative integer $k$,  we say box $a$ and $b$ are \emph{well separated} by exactly $k$ boxes if their real parts of the centers $\bx_a, \bx_b \in \R^2$ satisfies 
\begin{equation*}
    \|\bx_a-\bx_b\|_\infty = (k+1)w_\ell. 
\end{equation*}
For $k\ge 1$, we define the \emph{$k$-colleagues} of the box $b$ to be the union of the boxes that are well separated by at most $k$ boxes.  For any box $b$, we give use this notion to define four lists of associated boxes.

\emph{List 1} of a box $b$ is empty if the box $b$ is a parent box. If the box $b$ is childless, then List 1 consists itself, its childless $k$-colleagues, all childless child boxes of its $k$-colleagues $c$ such that the real parts of their centers $\|\bx_b-\bx_c\|_\infty\le \frac{k}{2} (w_\ell + w_{\ell+1})$, and all childless $k-$colleagues of its parents $a$ such that the real parts of their centers $\|\bx_b-\bx_a\|_\infty \le \frac{ k}{2} (w_\ell+w_{\ell-1})$.  

\emph{List 2} of a box $b$ is formed by all the children of the $k$-colleagues of the box $b$'s parent that are well separated by at least $k+1$ boxes.  

\emph{List 3} of a box $b$ is empty if $b$ is a parent box. If the box $b$ is childless, then List 3 consists of all  descendants of $b$'s $k$-colleagues $c$ such that $\|\bx_b-\bx_c\|_\infty > \frac{k}{2} (w_\ell + w_{\ell+1})$.

\emph{List 4} of a box $b$ consists of all boxes $c$ such that $b$ is in the List 3 of $c$.

\begin{figure}[ht]
    \centering
 \begin{tikzpicture}[scale=1]
     \node[inner sep=0] at (0,0) {\includegraphics[width=0.4\textwidth, trim=400pt 0 320pt 0, clip]{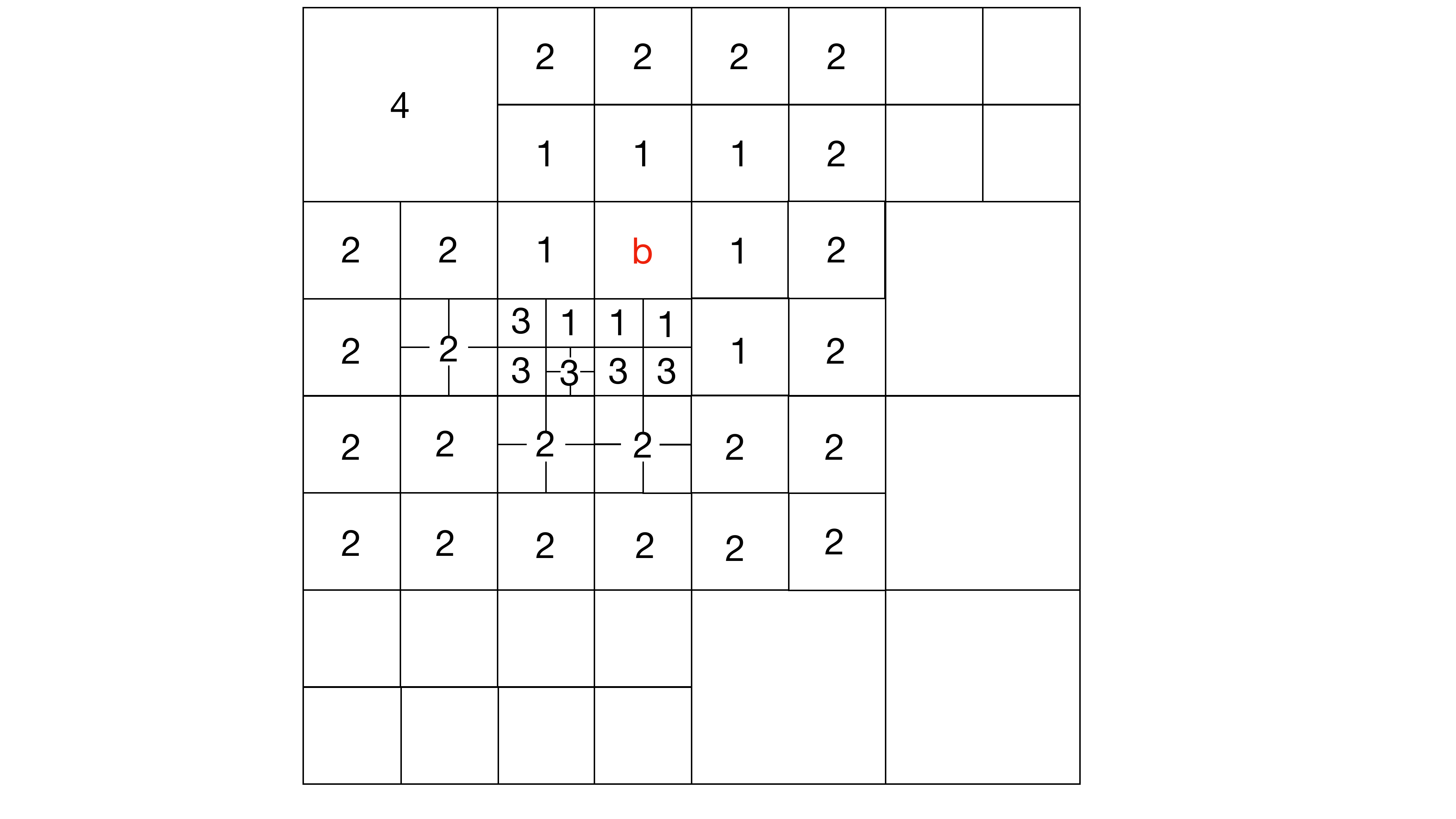}};
      \node[inner sep=0] at (8,0) {\includegraphics[width=0.4\textwidth, trim=400pt 0 320pt 0, clip]{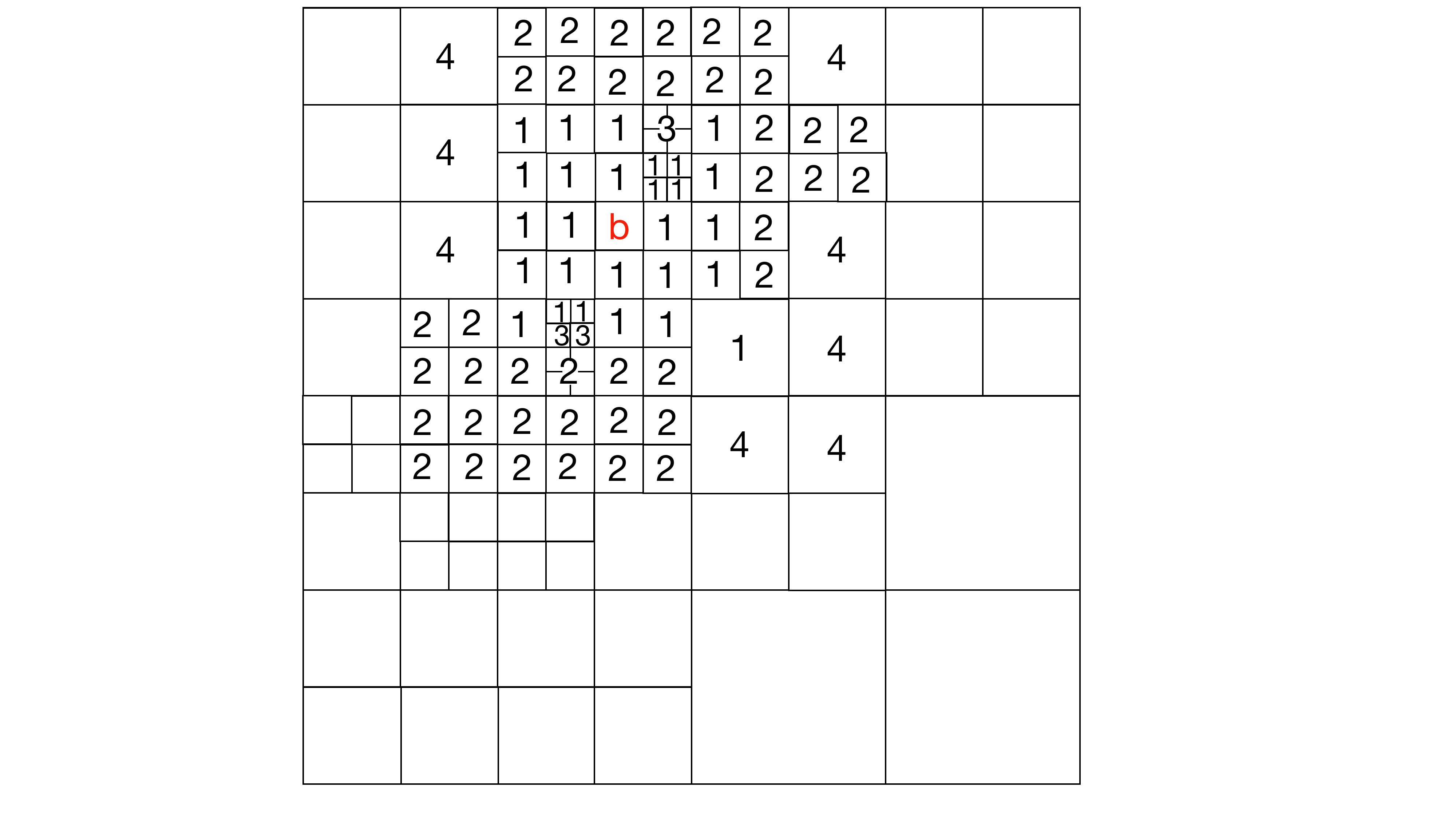}};
       \node[inner sep=0] at (-0.25,-3) {\footnotesize (A)};
      \node[inner sep=0] at (7.75,-3) {\footnotesize (B)};
\end{tikzpicture}
\caption{
(A) Real parts of a box $b$ and its Lists 1-4 for $k=1$. 
(B) Real parts of a box $b$ and its Lists 1-4 for $k=2$. 
} 
\label{fig:isep}
\end{figure}

In~\cref{fig:isep}, we visualize the Lists 1-4 of a box in a 2-D level-restricted trees for $k=1$ and $k=2$ respectively.

\subsection{Number of expansion terms for each level}
For a prescribed target tolerance $\epsilon>0$, an appropriate number of expansion terms $P$ must be determined based on the error bounds for the multipole expansions given in~\eqref{eqn:l2d_mp_err},\eqref{eqn:h2d_err_mp},\eqref{eqn:lap3d_err_mp}, and \eqref{eqn:h3d_mp_err}. In the real Laplace FMM, $P$ is typically chosen as the smallest integer $n$ such that
\begin{equation}
\label{eqn:nterms_criteria}
    |f_n(R) g_n(r)|\le \epsilon .
\end{equation}
where $R=(0.5+k) w$ is the nearest distance between a source location and the corresponding multipole expansion center,  with $w>0$ denoting the box width at a given level.  The radius $r$  is given by $r=\sqrt{2}/2w$ in 2-D and $r=\sqrt{3}/2w$ in 3-D. The functions  $f_n(z)=1/z^{n+1}$ and $g_n(z)=z^n$ representing the basis functions. In the complex-coordinate setting, Theorems~\ref{thm:2d_trans} and \ref{lma:lap3d_m2l} suggest modifying the criterion~\eqref{eqn:nterms_criteria} into the following:
\begin{equation}
\label{eqn:nterms_criteria_2d}
    \left( \frac{1+L}{1-L}\right)^n |f_n(R) g_n(r)| \le \epsilon
\end{equation}
in 2-D and 
\begin{equation}
\label{eqn:nterms_criteria_3d}
    \left( \frac{1}{C_L} \right)^n |f_n(R) g_n(r)| \le \epsilon
\end{equation}
in 3-D,  where $L>0$ is an upper bound on the Lipschitz constant of the complexification scheme, and $C_L$ is defined in~\eqref{eqn:eta_const}. These criteria imply that the required number of expansion terms in the complex setting is generally larger than in the real case and is the same across all levels for the Laplace FMM. The criteria require $L<0.3592$  for $k=1$ and $L<0.5590$ for $k=2$ in 2-D, and  $L<0.1270$ for $k=1$ and $L<0.3671$ for $k=2$ in 3-D. For Helmholtz equation, the basis functions are modified to $f_n(z)=H_n^{(1)}(\kappa  z)$, $g_n(z)=J_n(\kappa z)$ in 2-D and $f_n(z)=h_n^{(1)}(\kappa  z)$, $g_n(z)=j_n(\kappa z)$  in 3-D, where $\kappa$ denotes the frequency parameter. In this case, the required number of terms generally varies across levels, depending on $\kappa$ and $w$. 

We note that the prefactors $\left( \frac{1+L}{1-L}\right)^n$ and $ \left( \frac{1}{C_L} \right)^n$ appearing in~\eqref{eqn:nterms_criteria_2d} and \eqref{eqn:nterms_criteria_3d} are pessimistic in practice. Empirical results suggest that the criteria above often overestimate the number of required terms. The admissible range of Lipschitz constants for $\psi$ can be significantly larger, especially in 3-D. As a result, in practice, we ignore the prefactors in~\eqref{eqn:nterms_criteria_2d} and~\eqref{eqn:nterms_criteria_3d} when determining $P$, and instead use the same number of terms as in the real-coordinate FMM.

\subsection{Description of the algorithm}
\label{sec:algo_step}
We now detail the implementation of the FMM using the tree structure and the Lists 1-4. For example, the complex 2-D Laplace FMM algorithm proceeds sequentially through the following steps:
\begin{enumerate}
    \item[(1)] Loop from level $\ell=1$ to level $\ell=L$. For each childless box $b$, form its multipole expansion centered at $\tilde \bx_b$ using~\eqref{eqn:lap2d_mp_coef}. 
    \item[(2)] Loop from level $\ell=L-1$ down to level $\ell=1$. For each parent box $b$, aggregate its multipole expansion centered at $\tilde \bx_b$ by combing the multipole expansions of  its child boxes $c$, each  centered at $\tilde \bx_c$ via~\eqref{eqn:M2M_lap2d}. 
    \item[(3)] For every box $b$, loop over its List 1.  For each box $a$ in  List 1, evaluate the interactions directly between the target locations in $b$ and the source locations in $a$.
    \item[(4)] For every box $b$, loop over its List 2. For each box $a$ in List 2, convert $a$'s multipole expansion centered at $\tilde \bx_a$ to $b$'s local expansion centered at $\tilde \bx_b$  using~\eqref{eqn:M2L_lap2d}.
    \item[(5)] For every box, evaluate the multipole expansions from the boxes $b$ in its List 3 each centered at $\tilde \bx_b$  using~\eqref{eqn:mul_expansion_lap2d}.
    \item[(6)] For every box, evaluate contributions to its local expansion from the source locations in the boxes of its List 4 using~\eqref{eqn:lap2d_loc_coef}. 
    \item[(7)] Loop from level $\ell = 1$ to $\ell = L - 1$. For each parent box $b$, shift its local expansion centered at $\tilde \bx_b$  to the local expansion of  its child boxes $c$, each centered at $\tilde \bx_c$ using~\eqref{eqn:L2L_lap2d}.
    \item[(8)] For each childless box, evaluate its local expansion at for all its target locations using~\eqref{eqn:loc_expansion_lap2d}.
\end{enumerate}
The algorithms for the other three types of complex-coordinate FMM discussed in this paper, the 3-D Laplace, 2-D Helmholtz, and 3-D Helmholtz, follow the same structure, but use their associated multipole expansions, local expansions, and  translation operators. Our complex-coordinate have the same computational cost as those of classical FMM, which are  $\mathcal{O}(P^2 N)$ in two dimensions and $\mathcal{O}(P^3N)$ in three dimensions.  A detailed complexity analysis can be found in, for example,\cite{10.7551/mitpress/5750.001.0001,CHENG1999468,GREENGARD1987325}.

\subsection{Catastrophic cancellation}
\label{sec:catastrophic}
In the standard FMM, $k=1$ is usually used.  However, for the 3-D complex-coordinate Helmholtz FMM, we observe that for problems larger than approximately 25 wavelengths, the use of $k=1$ results in significant catastrophic cancellation errors in the M2L operators.  To mitigate this instability, it is beneficial to set $k=2$,  which expands the range of direct evaluations for near-field interactions. Empirically we observe that with $k=2$ the algorithm remains stable for problems up to approximately 50 wavelengths. Although this choice eventually increases the number of M2L interactions required in the far-field, the overall computational complexity remains $\mathcal{O}(N)$, albeit with a  larger prefactor.  For the 2-D complex-coordinate Helmholtz FMM, we also observe significant catastrophic cancellation for problems larger than approximately 150 wavelengths. The algorithm can also be stabilized in this context by again increasing $k$. 

In practice, efficient implementations of the Helmholtz FMM rely on switching to a high-frequency FMM based on far-field signatures when the root box at level $0$ is more than 16 wavelengths in size~\cite{crutchfield2006remarks,wideband3d}. Thus, a different approach is required to obtain efficient implementations at high frequencies which would mitigate the catastrophic cancellation issue. Alternatively, the issue can also be mitigated using variable precision arithmetic which would also result in a larger prefactor.


\section{Numerical Results}
\label{sec:numerical_results}
In this section, we present numerical experiments demonstrating the efficiency of our complex-coordinate FMMs. We compare the time complexity of computing single layer potentials on complex point clouds using the complex-coordinate FMMs and direct methods in Section~\ref{sec:time_comparison}.  Then, we use the complex-coordinate FMMs to solve large-scale problems including 3-D time-harmonic water wave  in Section~\ref{sec:water_wave} and 3-D Helmholtz transmission problem in Section~\ref{sec:helm_transmission}.

Before demonstrating our numerical results, we  define the following auxiliary functions, which are used for complex deformation:
\begin{align*}
    \xi(t) &= \frac{1}{2} \left(t \cdot \erfc(t) -\frac{e^{-t^2/2}}{\sqrt{\pi}}\right),\\ 
    \psi_{a,b,t_0}(t)&= a\cdot \left( \xi(b\cdot(t+t_0))-   \xi(b\cdot(t-t_0)) \right).
\end{align*}

Our complex-coordinate FMMs are implemented in Fortran 77 with the optimization flag \texttt{-O3} and OpenMP parallelism. The direct methods for comparison are also implemented in Fortran 77 with the flag \texttt{-O3}  and OpenMP.  All numerical experiments were performed on a 16-core Linux machine  with  12th Gen Intel\textsuperscript{\textregistered} Core\textsuperscript{TM} i9-12900 CPU running at 2.40 GHz and 128 GB of memory. 

\begin{figure}[ht]
    \centering
 \begin{tikzpicture}[scale=1]
     \node[inner sep=0] at (0,0) {\includegraphics[width=0.45\textwidth, trim=0 200pt 0 200pt, clip]{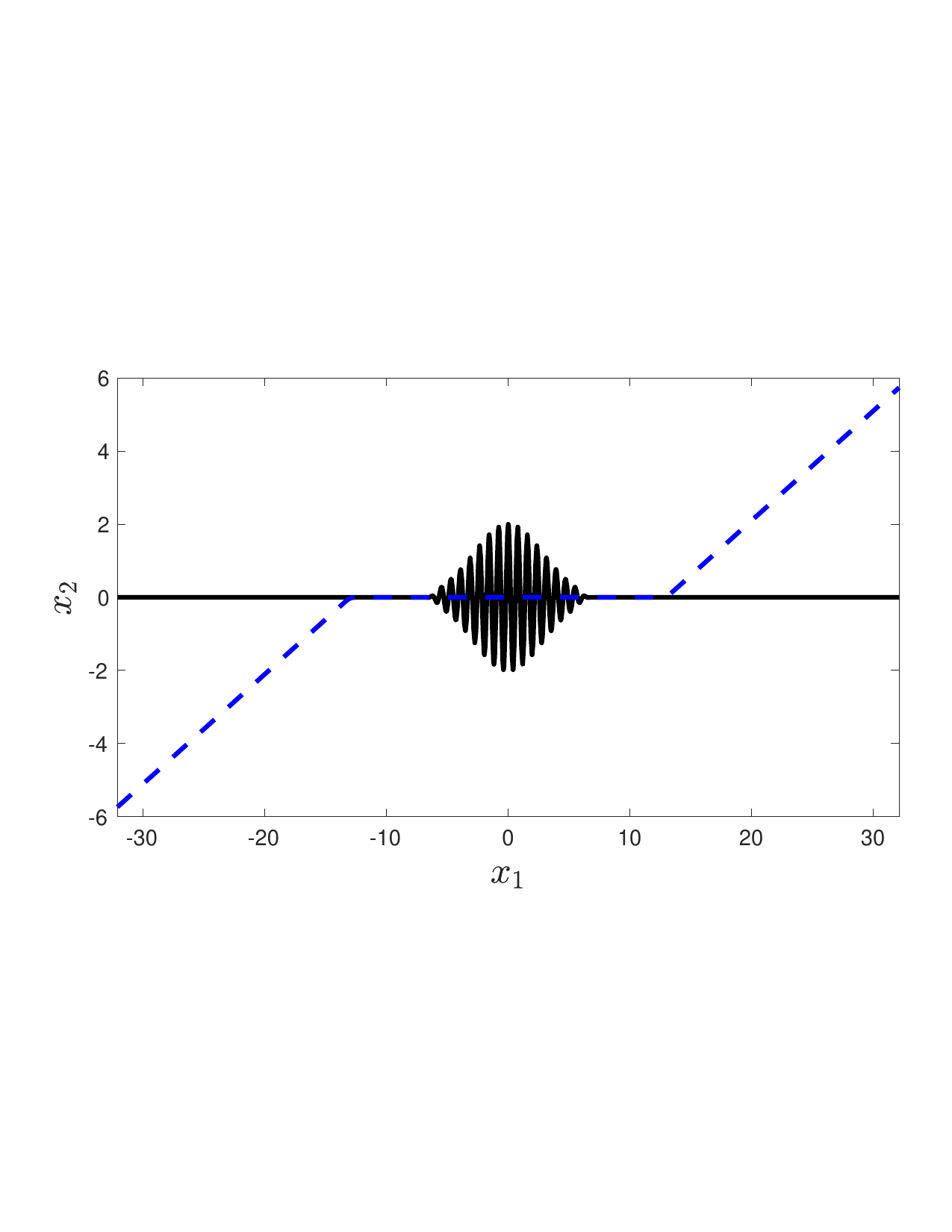}};
      \node[inner sep=0] at (8,0) {\includegraphics[width=0.6\textwidth]{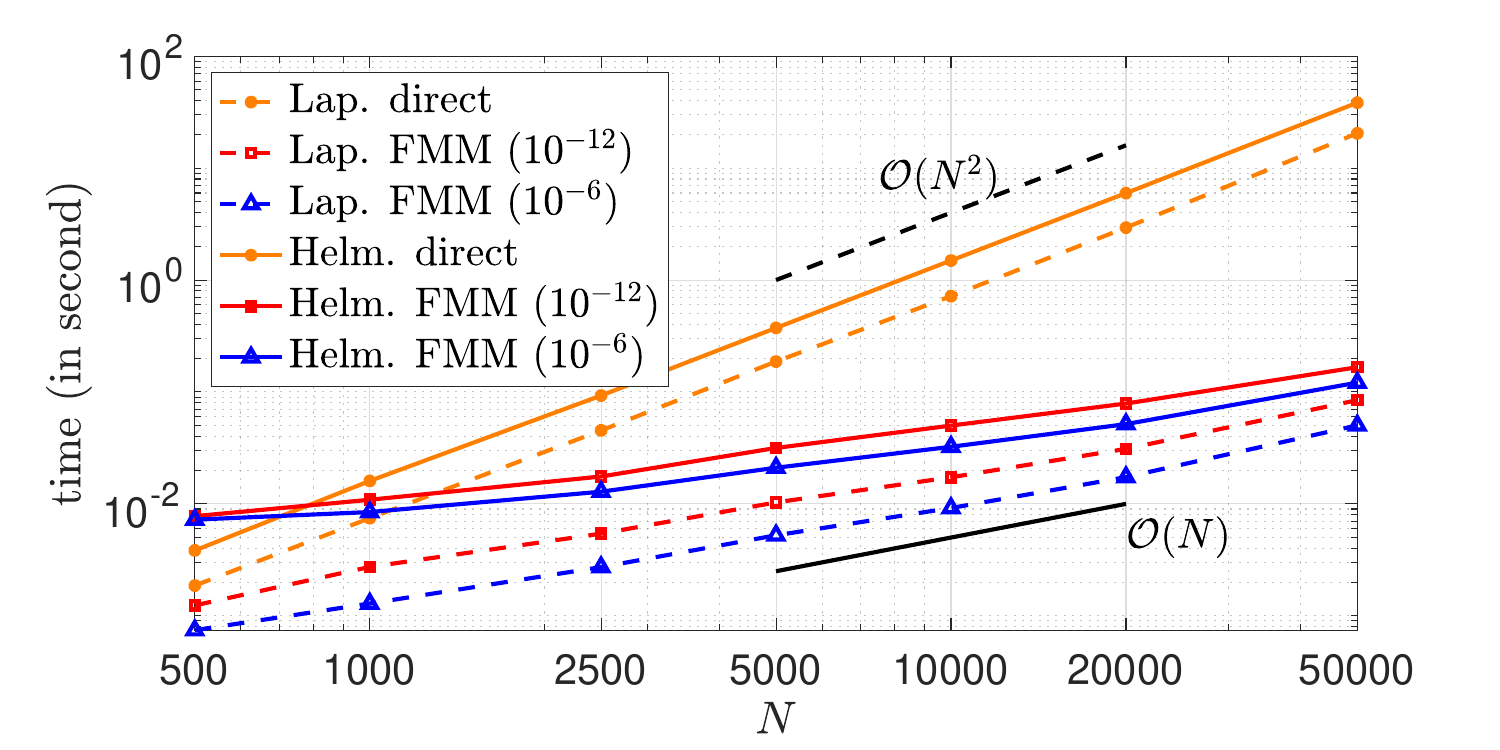}};
      \node[inner sep=0] at (0,-2.5) {\footnotesize (A)};
      \node[inner sep=0] at (8,-2.5) {\footnotesize (B)};
\end{tikzpicture}
\caption{(A) The 2-D interface  $\tilde \Gamma$ used in~\cref{sec:time_comparison}. The black solid line shows the real  interface $\Gamma$. The blue dashed line shows the imaginary part of the $x_1$-component of the deformed interface $\tilde \Gamma$ as a function of $\Re x_1$. (B) Log-log plot of the evaluation time for  the single layer potentials of 2-D Laplace and Helmholtz kernels using the direct methods and the complex-coordinate FMMs with relative accuracies $10^{-12}$ and $10^{-6}$, over varying numbers $N$ of complex source and target  locations on $\tilde \Gamma$ defined by~\eqref{eqn:wobble_2D_cplx}. } 
\label{fig:2d_speed_test}
\end{figure}

\subsection{Time complexity}
\label{sec:time_comparison}
 We consider a 2-D  interface $\Gamma$ parameterized by
\begin{align}
\label{eqn:wobble_2D}
    \bgamma(t)=\begin{bmatrix}
        t\\
         2e^{-\frac{t^2}{16}} \cos(8t) \left[1-(\mathrm{erfc}(2(t-6))+\mathrm{erfc}(2(t+6)))\right]
    \end{bmatrix}, \quad t\in \R.
\end{align}
We assume that the complex deformation $\tilde{\Gamma}$ is given by
\begin{align}
\label{eqn:wobble_2D_cplx}
   \tilde \bgamma(t)=\begin{bmatrix}
        t+i\psi_{a,b,t_0}(t)\\
         2e^{-\frac{t^2}{16}} \cos(8t) \left[1-(\mathrm{erfc}(2(t-6))+\mathrm{erfc}(2(t+6)))\right]
    \end{bmatrix}, \quad t\in \R
\end{align}
where $a=1/20,b=3$ and $t_0=13$. 
A visualization of the interface $\Gamma$ and its complex deformation  is shown in~\cref{fig:2d_speed_test}(A). 

We use the package ChunkIE~\cite{chunkie} to discretize $\tilde{\Gamma}$ using a large number of Guass-Legendre panels. We then sample $N$ source (and target) locations randomly from the resulting quadrature nodes and~$N$ charge strengths $\sigma$ from a standard complex Gaussian distribution. In ~\cref{fig:2d_speed_test} (B), we report the time required to evaluate the single layer potentials ($\kappa=2\pi$ for Helmholtz kernel) using the direct methods  the complex-coordinate FMMs with relative error $10^{-12}$ and  $10^{-6}$, where the relative error  is defined as
\begin{align}
\label{eqn:relerr}
    \frac{\|\vec u-\vec u_0\|}{\|\vec u_0\|+\|\vec \sigma\|}
\end{align}
where  $\vec u$ is the potential computed by FMMs and $\vec u_0$ is the  potential computed by direct methods.  The linear time scaling of the complex-coordinate FMMs is demonstrated in the figure. 

\begin{figure}[ht]
    \centering
 \begin{tikzpicture}[scale=1]
     \node[inner sep=0] at (0,0) {\includegraphics[width=0.5\textwidth]{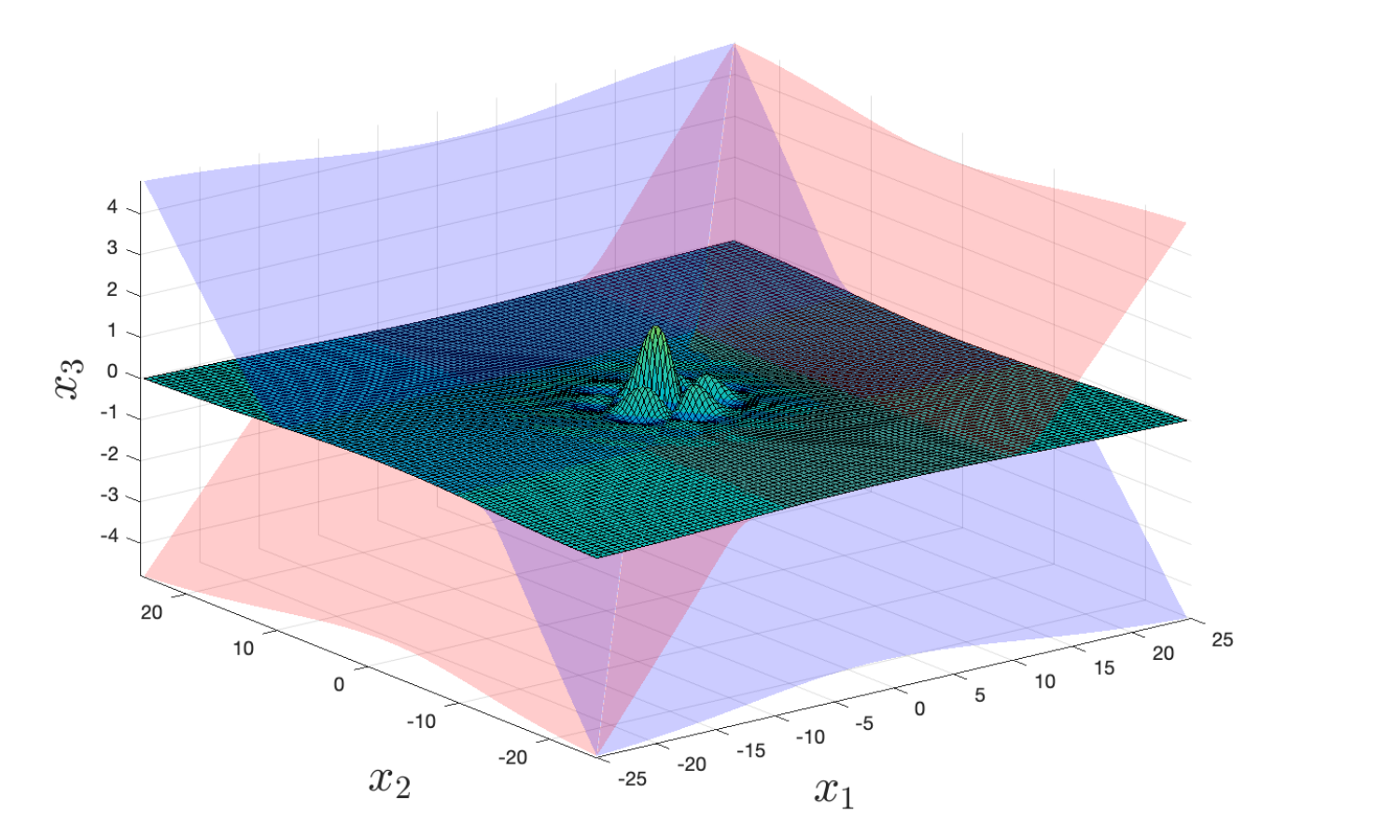}};
      \node[inner sep=0] at (7.5,0) {\includegraphics[width=0.6\textwidth]{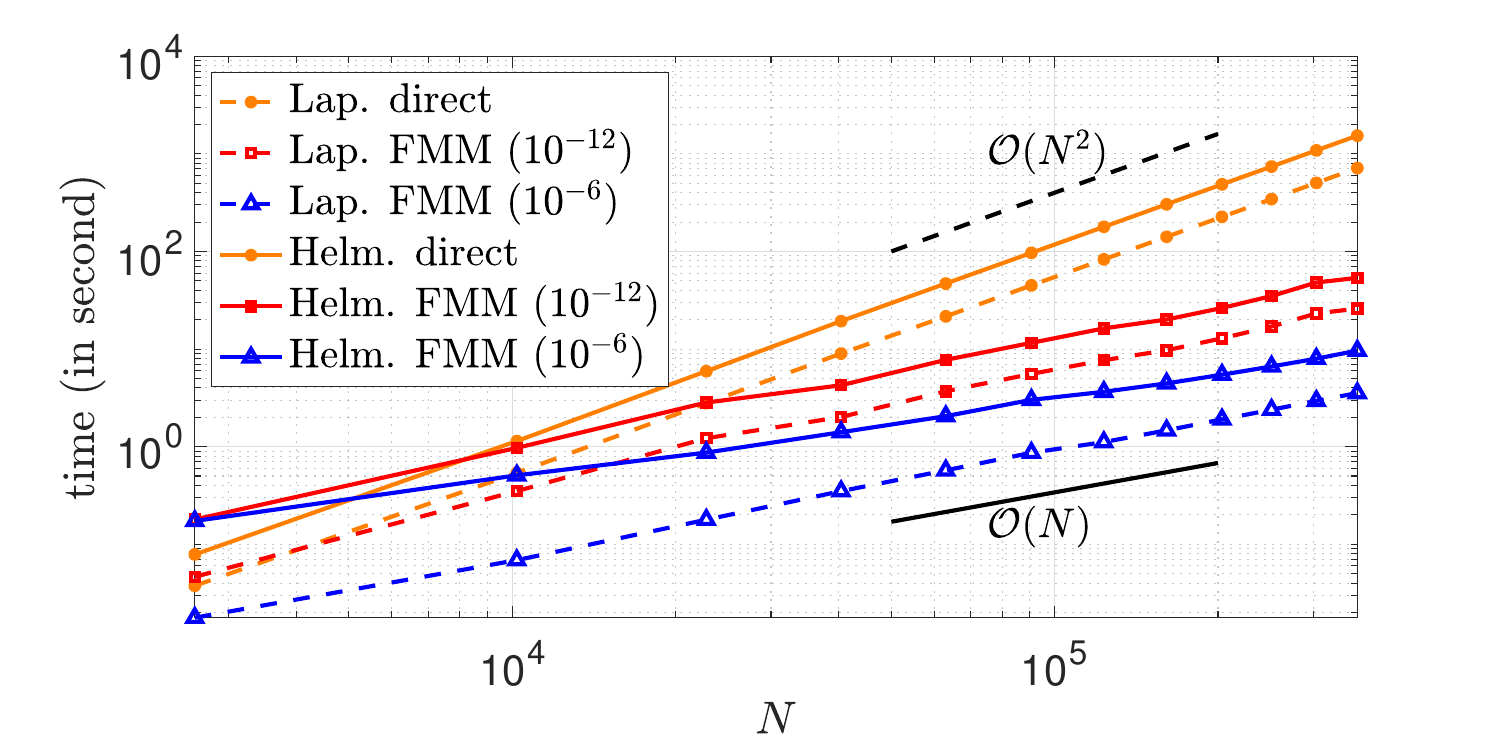}};
      \node[inner sep=0] at (0,-2.5) {\footnotesize (A)};
      \node[inner sep=0] at (8,-2.5) {\footnotesize (B)};
\end{tikzpicture}
\caption{(A) The interface $\Gamma$ and the imaginary parts of $x_1$ and $x_2$ coordinates of the complex deformation $\tilde \Gamma$.  The red and blue transparent sheets represent $\Im x_1$ and  $\Im x_2$ of the surface, respectively. (B) Log-log plot of the evaluation time for the single layer potentials  for 3-D Laplace and Helmholtz kernels using the direct methods and the complex-coordinate FMMs with  relative accuracies $10^{-12}$ and $10^{-6}$, over varying numbers $N$ of complex source and target  locations on $\tilde \Gamma$ defined by~\eqref{eqn:complex_wobble}. } 
\label{fig:3d_speed_test}
\end{figure}

Next, we consider an interface $\Gamma$ in three dimensions parameterized by
\begin{equation}
\label{eqn:wobble}
    \bgamma(t_1,t_2) = \begin{bmatrix}
        u(t_1,t_2)\\
        v(t_1,t_2)\\
        e^{-(u^2+v^2)/8} [\cos(1.9u+0.95 v)+\sin(u+1.55v)]
    \end{bmatrix}, \quad (t_1,t_2) \in [-25,25]^2
\end{equation}
where
\begin{equation*}
    u(t_1,t_2) = t_1-\frac{1}{2}t_1\exp\left(-\frac{t_1^2+t_2^2}{300}\right), \quad 
    v(t_1,t_2) = t_2-\frac{1}{2}t_2\exp\left(-\frac{t_1^2+t_2^2}{300}\right).
\end{equation*}
 The complex deformation of the interface is given by
\begin{equation}
\label{eqn:complex_wobble}
    \tilde \bgamma(t_1,t_2) = \begin{bmatrix}
        u(t_1,t_2)+i\psi_{a,b,t_0}(u(t_1,t_2))\\
        v(t_1,t_2)+i\psi_{a,b,t_0}(v(t_1,t_2))\\
        e^{-(u^2+v^2)/8} [\cos(1.9u+0.95 v)+\sin(u+1.55v)]
    \end{bmatrix}, \quad (t_1,t_2) \in [-25,25]^2
\end{equation}
where $a=0.2,b=0.75$ and $t_0=12$. 
The resulting complexified interface is illustrated in~\cref{fig:3d_speed_test} (A). We discretize $\tilde \Gamma$  using the 5-th order complexified corrected trapezoidal rule developed in~\cite{hoskins2023quadraturesingularintegraloperators}. 

In~\cref{fig:3d_speed_test} (B), we demonstrate  the linear time scaling  of evaluating the single layer potentials of Laplace and Helmholtz kernels using the complex-coordinate FMMs with relative errors $10^{-12}$  and  $10^{-6}$, compared to the quadratic time complexity of the direct methods.

\subsection{Time-harmonic water wave}
\label{sec:water_wave}
Let $\Omega$ be an infinite fluid domain, bounded above by a flat free surface  $\Gamma_t$ defined as   $\{x_3=H\}$ for some $H>0$ modeling the depth,  and below  by a rigid bottom surface $\Gamma_b$, represented by a perturbed  $(x_1,x_2)$ plane. The domain may also contain one or more submerged obstacles with boundaries given by $\Gamma_o$, which may be disconnected. The goal is to compute the velocity potential $u$ by solving the boundary value problem:
\begin{equation}
\label{eqn:water_wave_3d}
\begin{cases}
    \Delta u=0, \quad & \text{ in } \Omega\\ 
    \partial_\nu u-\alpha\cdot u = f, \quad & \text{ on } \Gamma_t\\ 
    \partial_\nu u =g, \quad & \text{ on } \Gamma_b \cup \Gamma_o\\
    \lim_{\tilde r\to \infty}\partial_{\tilde r} u(x) \mp iu(x)=0,& \tilde r=\sqrt{x_1^2+x_2^2}
\end{cases}
\end{equation}
where  $\alpha>0$ is a frequency parameter, and $f,g$ are prescribed boundary data. The normal vector $\nu$ on $\Gamma=\Gamma_t \cup \Gamma_b \cup \Gamma_o$ is chosen to point away from~$\Omega$. In 2-D, this problem can be efficiently solved using the complex scaled BIE method introduced in~\cite{doi:10.1137/23M1607866}, which we summarize below. We use the same technique to solve the 3-D version of the problem.

Let $\tilde \Gamma_t$ and $\tilde \Gamma_b$ denote suitable complex deformations of the real interfaces $\Gamma_t$ and $\Gamma_b$, respectively.  Since $u$ is harmonic in $\Omega$,  we have Green’s identity:
\begin{equation*}
    u = S_{\tilde \Gamma}[\partial_v u]-D_{\tilde \Gamma}[u]\, \quad \text{ in }  \Omega.
\end{equation*}
with $\tilde \Gamma = \tilde\Gamma_t \cup \tilde\Gamma_b \cup \Gamma_o$. By taking the limiting value of the above representation on $\tilde\Gamma$ and using the standard jump relations for the double layer potential, we obtain the relation:
\begin{equation*}
    \frac{1}{2}u = S_{\tilde\Gamma}[\partial_v u]-D_{\tilde\Gamma}[u]\, \quad \text{ on } \tilde\Gamma.
\end{equation*}
Substituting the boundary conditions from~\eqref{eqn:water_wave_3d} into this relation leads to the following BIE
\begin{equation}
\label{eqn:waterwaveBIE}
    \left(\frac{1}{2}I + \begin{bmatrix}
        D_{\tilde\Gamma_t\to \tilde\Gamma_t}-\alpha S_{\tilde\Gamma_t\to \tilde\Gamma_t}  & D_{\tilde\Gamma_{bo}\to \tilde\Gamma_t}\\ 
        D_{\tilde\Gamma_t\to \tilde\Gamma_{bo}}-\alpha S_{\tilde\Gamma_t\to \tilde\Gamma_{bo}} &  D_{\tilde\Gamma_{bo}\to \tilde\Gamma_{bo}}
    \end{bmatrix}\right) 
    \begin{bmatrix}
        u\vert_{\tilde\Gamma_t}\\
        u\vert_{\tilde\Gamma_{bo}}
    \end{bmatrix}=
    \begin{bmatrix}
        S_{\tilde\Gamma_t\to\tilde\Gamma_t} [f]+S_{\tilde\Gamma_{bo}\to \tilde\Gamma_t}[g]\\
        S_{\tilde\Gamma_t\to\tilde\Gamma_{bo}} [f]+S_{\tilde\Gamma_{bo}\to \tilde\Gamma_{bo}}[g]
    \end{bmatrix},
\end{equation}
where  $\tilde \Gamma_{bo}=\tilde \Gamma_b \cup \Gamma_o$, and $S_{\Gamma_1\to\Gamma_2}$  and $D_{\Gamma_1\to\Gamma_2}$ denote the single layer and double layer potentials,  with charge and dipole supported on $\Gamma_1$ and evaluation restricted to $\Gamma_2$ for two complexified boundaries $\Gamma_1,\Gamma_2$.


\begin{figure}[ht]
    \centering
 \begin{tikzpicture}[scale=1]
     \node[inner sep=0] at (0,0) {\includegraphics[width=0.5\textwidth]{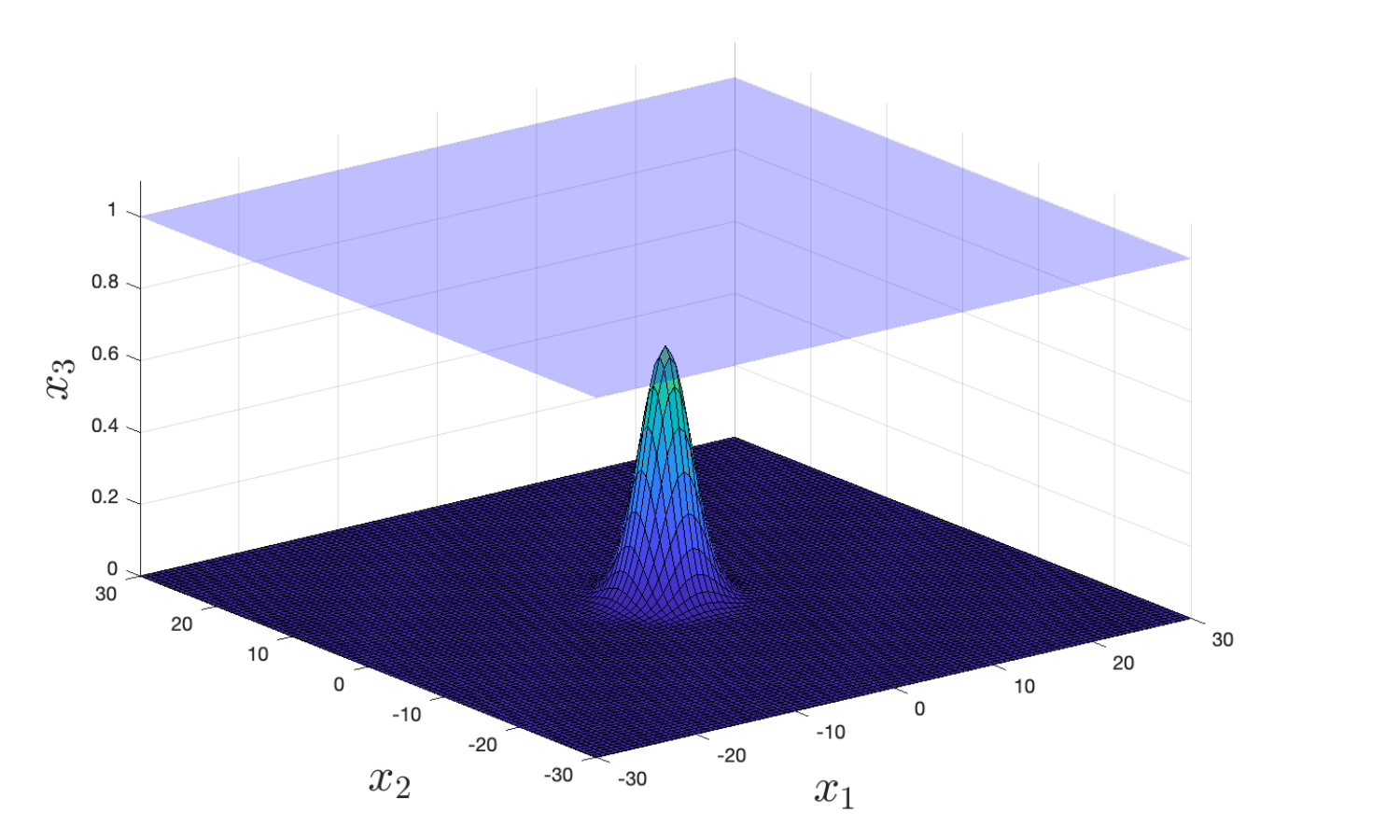}};
      \node[inner sep=0] at (8,0) {\includegraphics[width=0.5\textwidth]{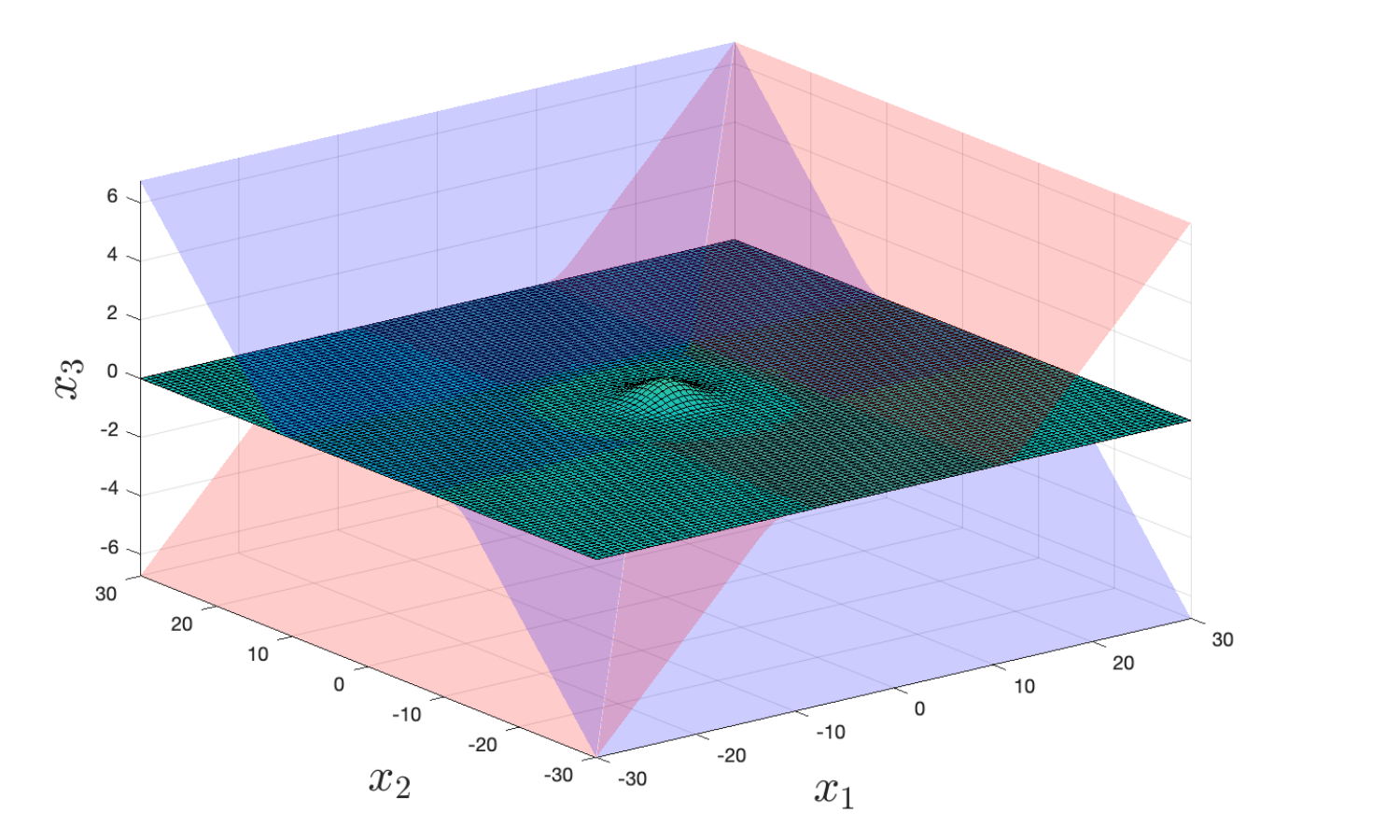}};
      \node[inner sep=0] at (0,-2.4) {\footnotesize (A)};
      \node[inner sep=0] at (8,-2.4) {\footnotesize (B)};
\end{tikzpicture}
\caption{(A) The geometry for the 3-D water wave problem in Section~\ref{sec:water_wave}. The blue flat interface represents $\Gamma_t$ and the Gaussian bump below represents $\Gamma_b$. 
(B) The real   interface $\Gamma_b$ and the imaginary part of the $x_1,x_2$ components of the complexified interfaces $\tilde  \Gamma_t$ and $\tilde  \Gamma_b$.
The red sheet represents $\Im x_1 $ and the blue sheet represents $\Im x_2$. 
}
\label{fig:waterwave3d}
\end{figure}

In the context of a scattering problem, suppose that $u^{inc}$ is a given incident field satisfying Laplace equation and the boundary condition on $\tilde\Gamma_t$:
\begin{equation*}
\begin{cases}
    \Delta u^{inc}=0, \quad & \text{ in } \Omega\\ 
    \partial_\nu u^{inc}-\alpha\cdot u^{inc} = 0, \quad & \text{ on } \tilde\Gamma_t
\end{cases}
\end{equation*}
but not the boundary condition on $\tilde\Gamma_{bo}.$ We want to find the scattered field $u^s$ satisfying~\eqref{eqn:water_wave_3d} with boundary data given by $f=0$ and $g=-\partial_\nu u^{inc}$.

Suppose that the upper boundary  $\Gamma_t=\{x_3=1\},$ and the lower boundary is parameterized by a Gaussian bump
\begin{equation}
    \bgamma_b(t_1,t_2) = \begin{bmatrix}
        t_1+i\psi_{a,b,t_0}(t_1)\\
        t_2+i\psi_{a,b,t_0}(t_2)\\
        0.7 \exp\left(-\frac{t_1^2+t_2^2}{1.5^2}\right)
    \end{bmatrix}, \quad (t_1,t_2) \in [-30,30]^2.
\end{equation}
As our obstacle, we take~$\Gamma_o$ to be the ellipsoid centered at $(-4,-4,1/2)$ with radii $(1/2,1/2,1/10)$.  
We deform $\Gamma_t$ and $\Gamma_b$ into complex contour $\tilde \Gamma_t$ and $\tilde \Gamma_b$ given by
\begin{equation*}
   \tilde \bgamma_t (t_1,t_2)=
    \begin{bmatrix}
        t_1+i\psi_{a,b,t_0}(t_1)\\
        t_2+i\psi_{a,b,t_0}(t_2)\\
        1
    \end{bmatrix}, \quad  \tilde \bgamma_b (t_1,t_2)=
    \begin{bmatrix}
        t_1+i\psi_{a,b,t_0}(t_1)\\
        t_2+i\psi_{a,b,t_0}(t_2)\\
        0.7 \exp\left(-\frac{t_1^2+t_2^2}{1.5^2}\right)
    \end{bmatrix}, \quad (t_1,t_2) \in [-30,30]^2
\end{equation*}
where  $a=0.25,b=0.75$ and $t_0=12$.
The visualization of the interfaces is provided in~\cref{fig:waterwave3d}.

\begin{figure}[ht]
    \centering
 \begin{tikzpicture}[scale=1]
      \node[inner sep=0] at (0,0) {\includegraphics[width=0.5\textwidth, trim=0 200pt 0 200pt, clip]{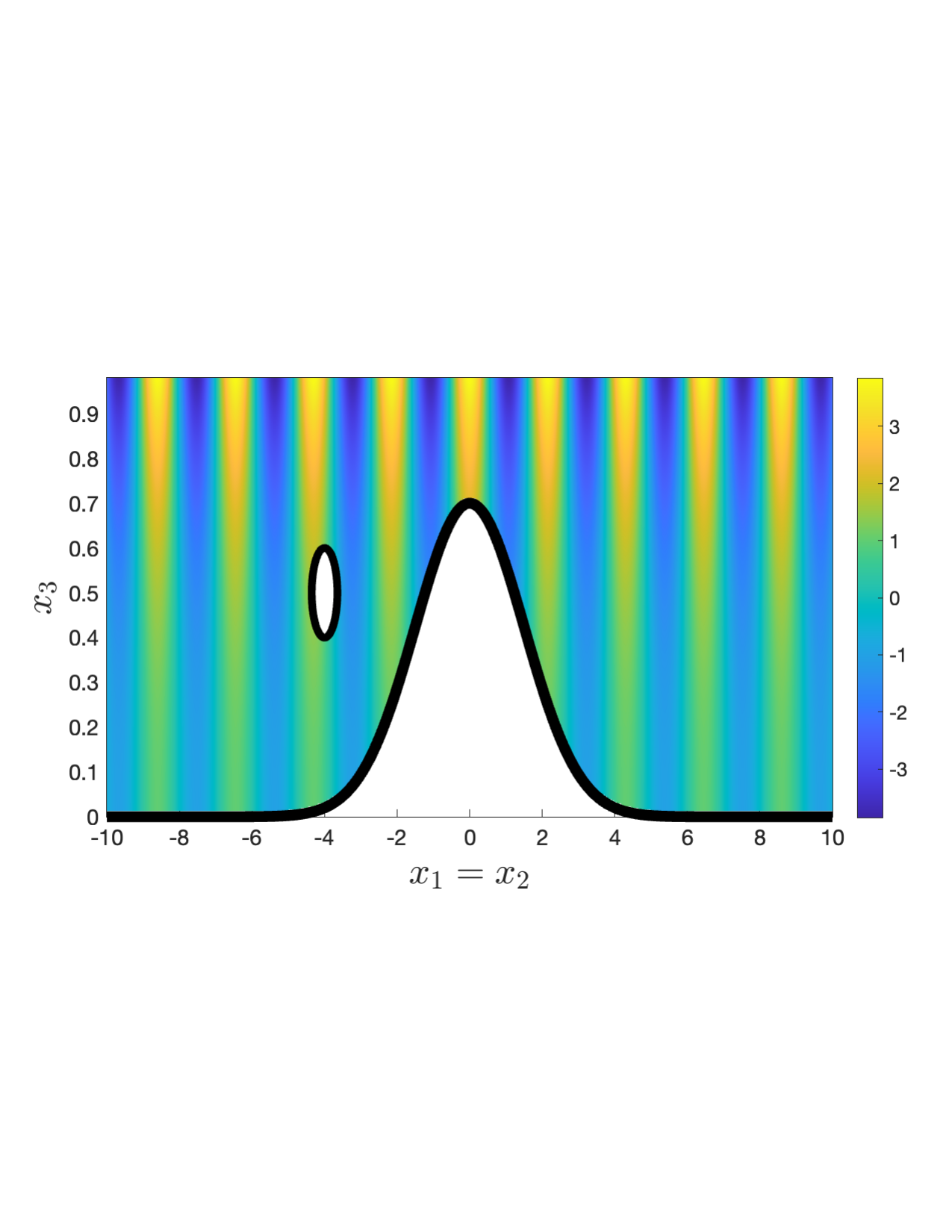}};
      \node[inner sep=0] at (8,0) {\includegraphics[width=0.5\textwidth, trim=0 200pt 0 200pt, clip]{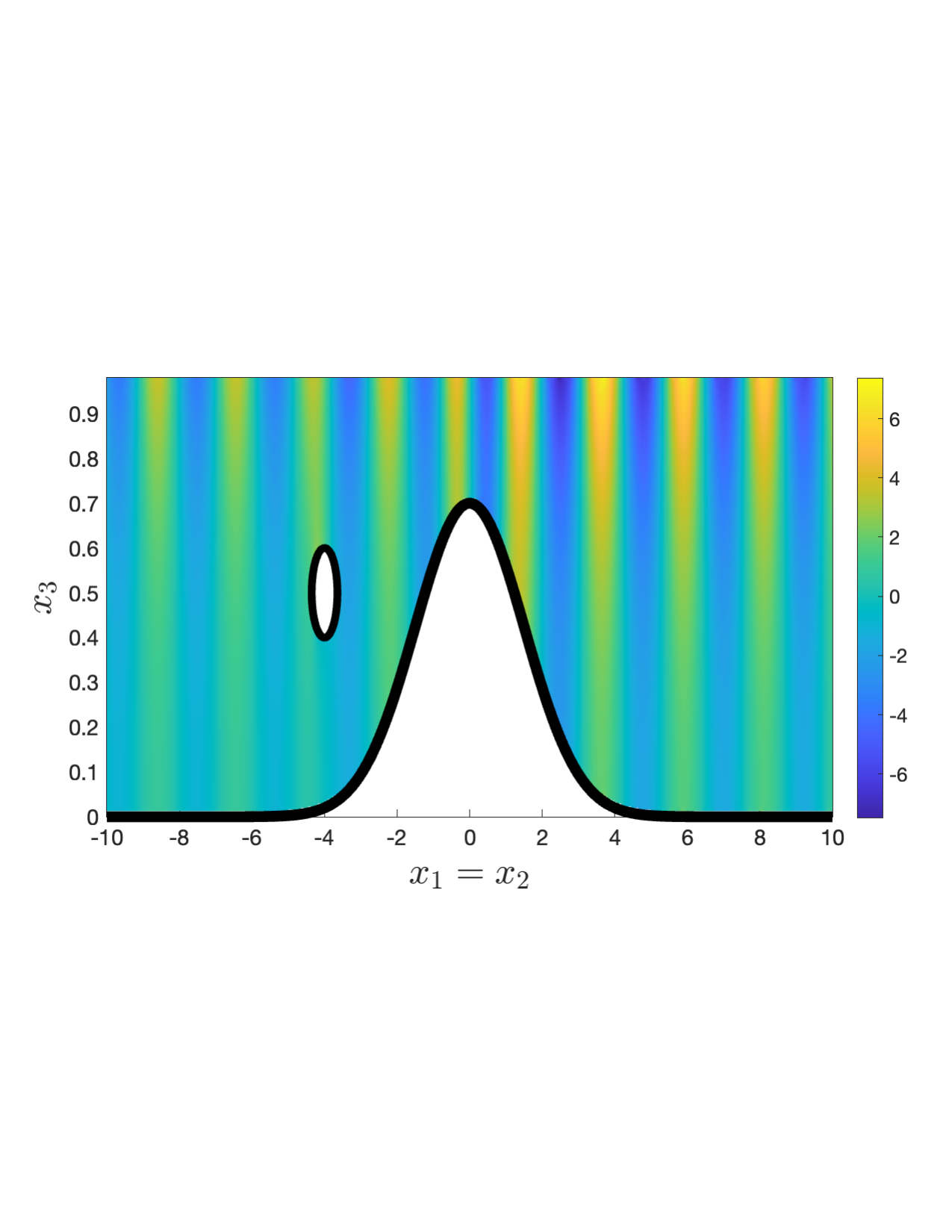}};
      \node[inner sep=0] at (0,-2.5) {\footnotesize (A)};
      \node[inner sep=0] at (8,-2.5) {\footnotesize (B)};
\end{tikzpicture}
\caption{Real parts of the incident field (left),  and the computed total field (right) for the 3-D water wave problem, visualized on the real cross section $\{x_1=x_2\}$.}
    \label{fig:waterwave3d_soln}
\end{figure}


Consider a scattering problem with $\alpha=2$ and with the incident field $u^{inc}(x_1,x_2,x_3) = e^{i\frac{\sqrt{2}}{2}\kappa(x_1+x_2) }$ $\cdot \cosh( \kappa x_3) $
where $\kappa>0$ solves the equation
$
\kappa \tanh(\kappa) = \alpha,
$
in order to satisfy the boundary condition on $\tilde \Gamma_t$. The boundary $\Gamma_o$ is split into curved triangles and discretized using 4320 Vioreaanu-Rohklin nodes~\cite{doi:10.1137/110860082},  while $\tilde \Gamma_t$ and $\tilde \Gamma_b$ are discretized using $1601^2=2563201$ nodes. The integral operators on~$\Gamma_o$ are discretized using the fmm3dbie package \cite{fmm3dbie}. The integral operators on~$\tilde\Gamma_a$ and~$\tilde \Gamma_b$ are discretized using the 7-th order complexified corrected trapezoidal rule discussed in \cite{hoskins2023quadraturesingularintegraloperators}. The resulting linear system consists of $2\times 2563201+4302 =5130704$ unknowns. We solve the BIE using GMRES accelerated by the 3-D complex-coordinate Laplace FMM with relative accuracy $10^{-8}$, where the tolerance of GMRES is set to be $10^{-6}$.  GMRES converges in 241 steps in $31$ hours.  The computed solution $(u|_{\tilde \Gamma_t}, u|_{\tilde \Gamma_b})$ also decays to less than $10^{-6}$ in magnitude at the edge of the truncated boundary. A self-convergence test gives an error of approximately $10^{-8}$. \Cref{fig:waterwave3d_soln} illustrates the computed fields on the real cross section $\{x_1=x_2\}$.

\subsection{Helmholtz transmission problem}
\label{sec:helm_transmission}
Suppose that the interface $\Gamma$  is a perturbation of the $(x_1,x_2)$ plane, modeling the boundary between two media. The region above $\Gamma$ is denoted by $\Omega_1$, while the region below $\Gamma$ is denoted by $\Omega_2$. Let $\kappa_{1}$, and $\kappa_{2}$, denote the wavenumbers in $\Omega_{1}$, and $\Omega_{2}$ respectively. The Helmholtz transmission boundary value problem for the potentials $u_{1}$ and $u_{2}$ is given by the following system of equations:
\begin{equation}
\label{eqn:transmission_PDE}
\begin{cases}
    -(\Delta+\kappa_1^2)u_i=0, \quad \text{ in } \Omega_i, \quad \text{ for } i=1,2, \\ 
    u_1-u_2=f, \quad \text{ on } \Gamma \\ 
    \partial_\nu u_1-\partial_\nu u_2=g, \quad \text{ on } \Gamma
\end{cases}
\end{equation}
assuming the normal vector $\nu$ of $\Gamma$  is oriented outward from $\Omega_1$ to $\Omega_2$, and $(f,g)$ are prescribed boundary data.

Assuming $\tilde{\Gamma}$ is a suitable complex deformation of $\Gamma$, we represent the fields $u_1,u_2$ using combined layer potentials defined on  $\tilde \Gamma$:
\begin{equation*}
    u_i = D_{\tilde \Gamma,\kappa_i}[\tau]-S_{\tilde \Gamma,\kappa_i}[\sigma]\quad \text{ in } \Omega_i, \quad \text{ for } i=1,2
\end{equation*}
where $\tau$ and $\sigma$ denote the dipole and charge strengths. Enforcing the boundary conditions yields the BIE
\begin{equation*}
    \left( -I_d+\begin{bmatrix}
        D_{\tilde \Gamma,\kappa_1}-D_{\tilde \Gamma,\kappa_2} & S_{\tilde \Gamma,\kappa_2}-S_{\tilde \Gamma,\kappa_1} \\ 
        D_{\tilde \Gamma,\kappa_1}'-D_{\tilde \Gamma,\kappa_2}' & S_{\tilde \Gamma,\kappa_2}'-S_{\tilde \Gamma,\kappa_1}'
    \end{bmatrix} \right) \begin{bmatrix}
        \tau \\
        \sigma
    \end{bmatrix}
    = \begin{bmatrix}
        f \\
        g
    \end{bmatrix}.
\end{equation*}

\begin{figure}[ht]
    \centering
 \begin{tikzpicture}[scale=1]
      \node[inner sep=0] at (0,0) {\includegraphics[width=0.5\textwidth, trim=0 200pt 0 200pt, clip]{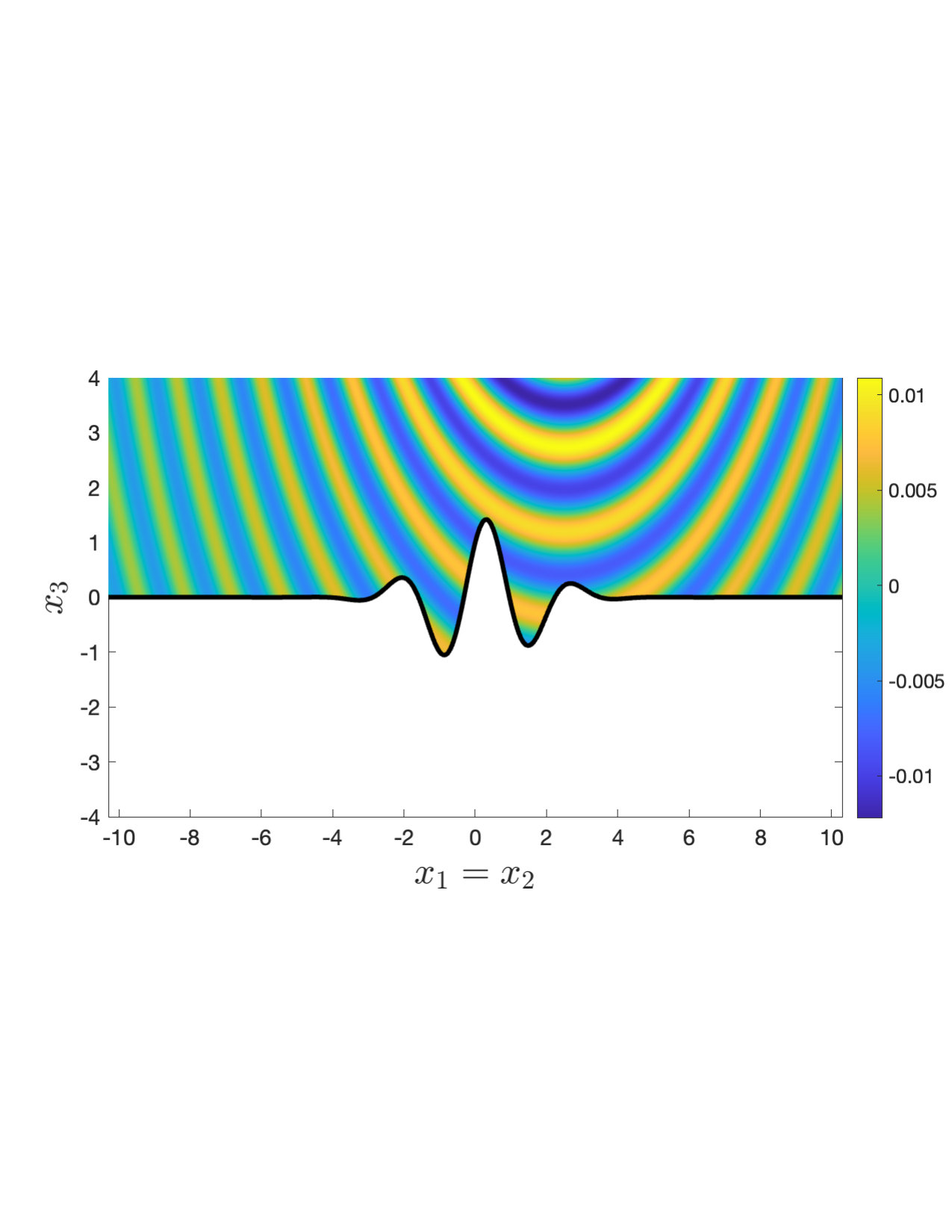}};
      \node[inner sep=0] at (8,0) {\includegraphics[width=0.5\textwidth, trim=0 200pt 0 200pt, clip]{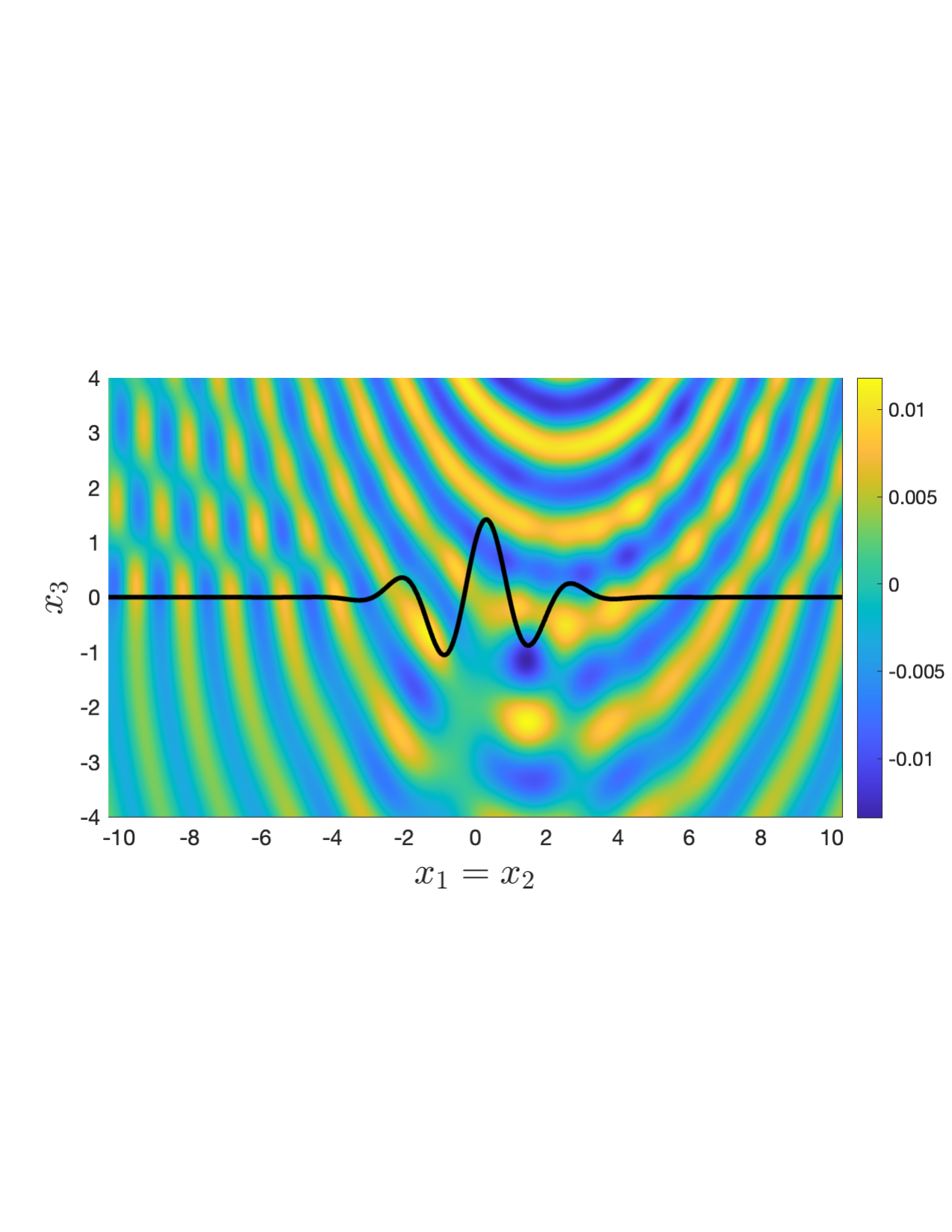}};
      \node[inner sep=0] at (0,-2.5) {\footnotesize (A)};
      \node[inner sep=0] at (8,-2.5) {\footnotesize (B)};
\end{tikzpicture}
\caption{Visualization of the numerical solution to the 3-D Helmholtz scattering problem due to a point source at $(2,3,10)$,  on the real cross section $\{x_1=x_2\}$. (A) Imaginary part of the incoming field generated by a point source. 
(B) Imaginary part of the computed total field.}
    \label{fig:helm3d_solutions}
\end{figure}

In the context of a scattering problem, we assume there is a known incident field $u^{inc}$ that propagates from $\Omega_1$ to $\Omega_2$. In this case, $u_1$ denotes the scattered field and $u_2$ represents the transmitted field. The boundary conditions in  \eqref{eqn:transmission_PDE} are given by
\begin{equation}
\label{eqn:transmission_scattering_BC}
    f = -u^{inc}, \quad g = -\partial_\nu u^{inc}, \quad \text{ on } \tilde \Gamma.
\end{equation}
The total field  $u^{tot}$ is defined by 
\begin{equation*}
    \begin{cases}
        u^{tot}=u_1+u^{inc}, & \text{ in } \Omega_1,\\
        u^{tot}=u_2, & \text{ in } \Omega_2.
    \end{cases}
\end{equation*}
which is continuous across the complexified interface $\tilde{\Gamma}$, along with its normal derivative, due to the boundary conditions \eqref{eqn:transmission_scattering_BC}.

Suppose that the interface $\Gamma$ and its complex deformation are as defined in \eqref{eqn:wobble} and \eqref{eqn:complex_wobble} with $a=0.25$, $\kappa_1=1.3\pi$ and $\kappa_2=\pi$. 
The complexified interface $\tilde \Gamma$ is discretized  by complexified corrected trapezoidal rule, resulting in  $2501^2=6255001$ discretization points and $2\times 6255001 =12510002$ unknowns.  This discretization achieves 5 relative digits in an analytical solution test, with GMRES' tolerance being $10^{-6}$.   Consider the scattering problem where the incident field is generated by a point source located in the upper half-space:
\begin{equation*}
    u^{inc}(\bx)=\frac{e^{i\kappa_1\|\bx-\bx_0\|}}{4\pi \|\bx-\bx_0\|}, \quad \bx_0=(2,3,10).
\end{equation*}
The discretized system is solved using GMRES accelerated by the complex-coordinate Helmholtz FMM with relative accuracy $10^{-8}$, which converges in $26$ iterations and spends approximately 14 hours. Numerically we observe that the computed $(\tau,\sigma)$ decay to below $10^{-9}$ in magnitude where~$\tilde\Gamma$ is truncated.  In~\cref{fig:helm3d_solutions}, we visualize the incident field $u^{\mathrm{inc}}$,  and the resulting  total field $u^{\mathrm{tot}}$ on the cross section $\{x_1 = x_2\}$.

\section{Discussion and Conclusion}
\label{sec:conc}
In this work, we developed a complex-coordinate FMM for both Laplace and Helmholtz kernels in two and three dimensions, while maintaining the same linear time complexity as the classical FMM. The primary novelty of our work is to build a hierarchical data-structure on the ambient dimension of the problem, and leverage analytic continuations of polar/spherical coordinates and addition formulas. We prove truncation error estimates for the multipole and local expansions under Lipschitz assumptions on the imaginary parts expressed as functions of their corresponding real parts. We illustrate the effectiveness of this approach for the solution of complicated transmission problems and time-harmonic water waves. 

 However, there are several open questions that remain to be addressed. Firstly, the proof techniques presented in this work can most likely be refined to yield better geometric conditions and criteria for determining the length of the expansions. Secondly, obtaining sharp error estimates for the Helmholtz translation operators akin to their counterparts in the classical FMM remains an open problem. Thirdly, for efficient fast multipole methods in the high frequency regime, an extension of fast diagonalization transform techniques~\cite{ROKHLIN1990414,rokhlin_1993} which rely on far-field signatures is required --- this should reduce the cost of applying translation operators from $\cO(P^2)$ to $\cO(P\log P)$ in two dimensions, and from $\cO(P^3)$ to $\cO(P^2\log P)$ in three dimensions. Fourthly, the techniques in this work can potentially be extended to other Green's functions (e.g., Stokes and Maxwell equations) requiring complex scaling for handling unbounded domains. Finally, it would be interesting to see whether other fast algorithms for boundary integral equations can be extended to allow for complex coordinates. Empirically, it was observed in~\cite{hoskins2023quadraturesingularintegraloperators,complexification} that direct solvers can indeed be extended to this context. Rigorous guarantees for this, as well as systematic comparisons with the approach of this paper, is an interesting line of inquiry.
 
\section*{Acknowledgements}
 The authors would like to thank Charles Epstein, Shidong Jiang,  and Peter Nekrasov for many helpful discussions. The Flatiron Institute is a division of the Simons Foundation.

\printbibliography
\end{document}